\crefname{appsec}{appendix}{appendices}
\newcommand{\Real}{\mathrm{Re}}
\numberwithin{equation}{section}
\newtheoremstyle{newdefinition}
{20pt}
{20pt}
{}
{}
{\bfseries}
{.}
{.5em}
{}
\newtheorem{theorem}{Theorem}[section]
\newtheorem{lemma}{Lemma}[section]
\newtheorem{proposition}{Proposition}[section]
\newtheorem{hypothesis}{Hypothesis}
\Crefname{hypothesis}{Hypothesis}{Hypotheses}
\crefname{hypothesis}{hypothesis}{hypotheses}
\theoremstyle{newdefinition}
\newtheorem{definition}{Definition}[section]
\theoremstyle{remark}
\newtheorem{remark}{Remark}[section]
\title{Existence of Strong Solution for the Complexified Non-linear Poisson Boltzmann Equation}
\author{Brian Choi, Jie Xu, Trevor Norton, Mark Kon, Julio E. Castrill\'{o}n-Cand\'{a}s}
\date{}
\begin{document}
\maketitle
\begin{abstract}
We prove the existence and uniqueness of the complexified Nonlinear
Poisson-Boltzmann Equation (nPBE) in a bounded domain in
$\mathbb{R}^3$.  The nPBE is a model equation in nonlinear
electrostatics.  The standard convex optimization argument to the
complexified nPBE no longer applies, but instead, a contraction
mapping argument is developed. Furthermore, we show that uniqueness
can be lost if the hypotheses given are not satisfied.  The
complixified nPBE is highly relevant to regularity analysis of the
solution of the real nPBE with respect to the dielectric (diffusion)
and Debye-H\"uckel coefficients.  This approach is also well-suited to
investigate the existence and uniqueness problem for a wide class of
semi-linear elliptic Partial Differential Equations (PDEs).
\end{abstract}

\medskip

\noindent \textbf{MSC:} 35A01,  35A02,  35A20, 35G30

\section{Introduction.}

Linear elliptic partial differential equations have long been used to
model problems in physics, engineering, biology, and chemistry
\cite{evans2010partial}. In particular, simple linear elliptic
equations are often used to model the potential field generated by
molecular structures embedded in a solvent in thermal
equilibrium. However, a more accurate representation of this is given
by the non-linear Poisson Boltzmann Equation (nPBE). The nPBE is given
by
\begin{equation}\label{npb}
\begin{aligned}
	-\nabla \cdot (\epsilon(x) \nabla u) + \kappa(x)^2 \sinh u &=
        f, & &x\in \Omega,\\ u&= g, & &x \in \partial \Omega,
\end{aligned}
\end{equation}
where \(u\) is the nondimensionalized potential, \(\epsilon\) is the
dielectric, and \( \kappa ^2\) is the Debye-H\"uckel parameter
\cite{Holst1994}. The nPBE has found important applications in protein
interactions and molecular dynamics
\cite{Padhorny2016,Neumaier1997}. In \cref{nPBE:fig1} an example of
the electrostatic potential field is rendered from the solution of the
nPBE by using the Adaptive Poisson Boltzmann Solver (APBS)
\cite{Baker2001} for E. Coli RHo Protein. (PDB: 1A63
\cite{Berman2000}).  However, the mathematical properties of the nPBE
are less understood and significantly more complicated than the linear
case.

\begin{figure}[htb]
	\centering \includegraphics[height = 8cm, width = 8cm,
          trim=6cm 6cm 6cm 6cm,
          clip]{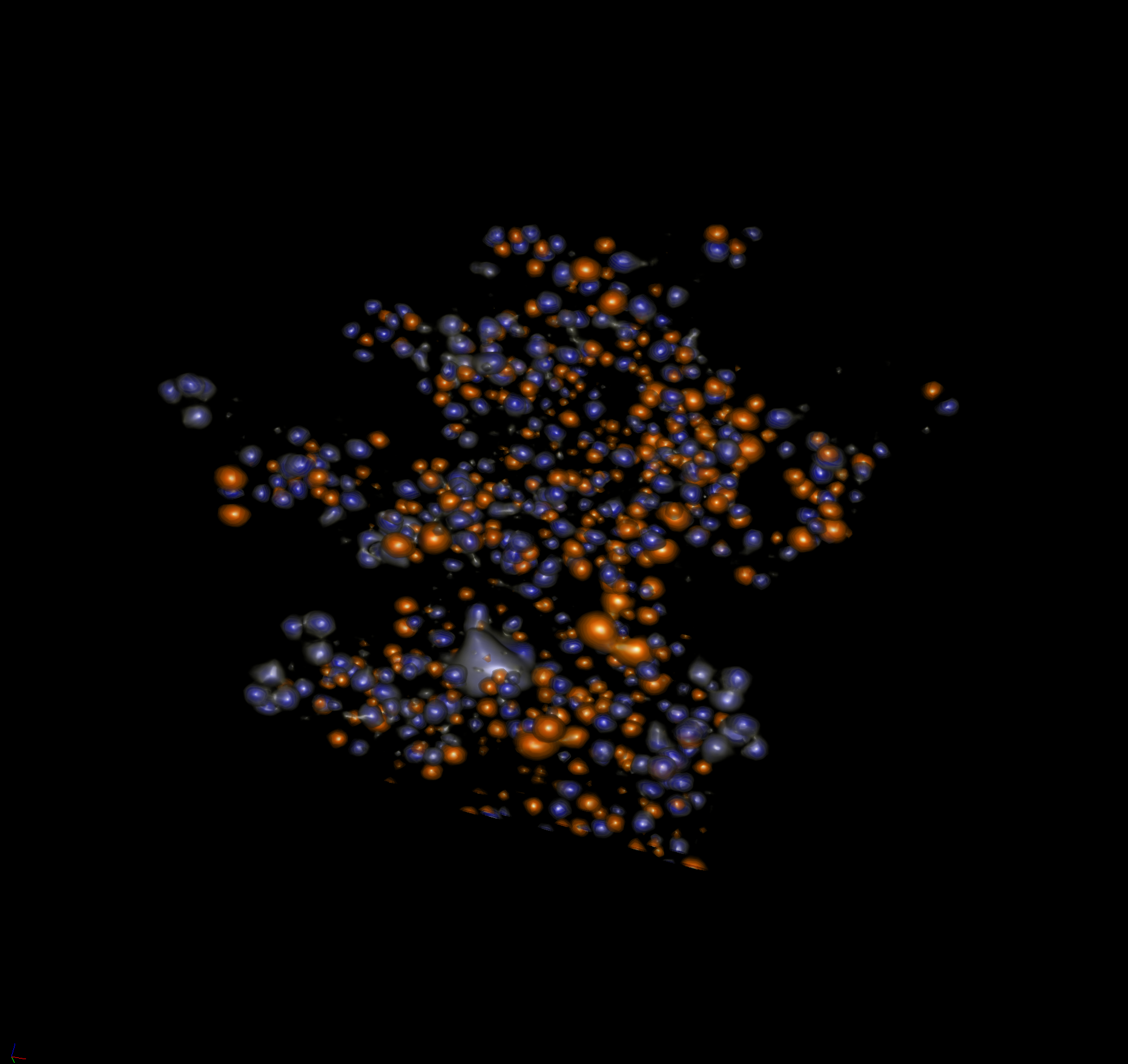}
	\caption{Electrostatic potential field obtained from the
          solution of the NPBE for the RNA binding domain of E. Coli
          RHO factor. The potential fields where created with the
          Adaptive Poisson Boltzmann Solver \cite{Baker2001} rendered
          with VolRover \cite{Bajaj2003,Bajaj2005}.  The positive and
          negative potential are rendered with blueish and
          orange/reddish colors respectively.}
	\label{nPBE:fig1}
\end{figure}

In \cite{Holst1994} Holst shows the existence and uniqueness of the
solution in the appropriate functional spaces. This approach relies on
the construction of a convex functional where the unique minimal
energy state corresponds to the solution of the nPBE.

In this paper we are interested in determining the existence and
uniqueness of the solution of the nPBE by extending the dielectric and
Debye-H\"uckel parameters into the complex domain. The convexity
theory developed in \cite{Holst1994} is no longer valid for this case,
thus motivating the construction of a novel theory to deal with the
complex case. Our ultimate goal is to study analytic extension of the
solution with respect to the complex parameters. This provides an
approach to determine the regularity and sensitivity of the solution
with respect to the dieletric and Debye-H\"uckel parameters.  This has
important connections to uncertainty quantification theory
\cite{babusk_nobile_temp_10,nobile_tempone_08,
  Castrillon2016,Castrillon2020,Castrillon2021}.

Here, we state a simplified version of our main statement; for more
details, see \cref{Schauder,Banach}.
\begin{theorem}\label{mainresult}
	Let $\Omega \subseteq \mathbb{R}^3$ be open, bounded, and
        convex with a smooth boundary. There exists $r =
        r(\epsilon,\kappa,\Omega)>0$ such that whenever $(f,g)\in
        \overline{B(0,r)} \subseteq L^2(\Omega)\times
        H^{\frac{3}{2}}(\partial\Omega)$, where \(\overline{B(0,r)}\)
        is a closed ball centered at the origin with radius $r>0$ with
        respect to the product norm, there exists a solution $u \in
        H^2(\Omega)$ to \cref{npb}. With further technical hypotheses
        on the parameters (such as $\epsilon,\kappa ^2,\Omega$), this
        solution is unique in a small ball in $H^2(\Omega)$.
\end{theorem}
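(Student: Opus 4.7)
The plan is to rewrite the nPBE as a fixed-point equation on $H^2(\Omega)\cap H_0^1(\Omega)$ and apply the Banach contraction principle on a small ball, leveraging the Sobolev embedding $H^2(\Omega)\hookrightarrow L^\infty(\Omega)$ (valid because $\dim\Omega=3$) to make $\sinh$ act as a locally Lipschitz Nemytskii operator even with complex-valued arguments. First I would absorb the boundary datum by letting $w\in H^2(\Omega)$ solve the linear complex-coefficient problem $-\nabla\cdot(\epsilon\nabla w)=0$ with $w=g$ on $\partial\Omega$, and then writing $u=v+w$ with $v\in H^2(\Omega)\cap H_0^1(\Omega)$; existence of such a $w$, together with the bound $\|w\|_{H^2}\leq C\|g\|_{H^{3/2}(\partial\Omega)}$, requires that the complex elliptic operator $Lv:=-\nabla\cdot(\epsilon\nabla v)$ with homogeneous Dirichlet conditions is boundedly invertible from $H^2\cap H_0^1$ onto $L^2$. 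This is where a quantitative hypothesis on $\epsilon$, for example $\Real\epsilon>0$ uniformly, would enter, via a Lax--Milgram argument against complex-conjugate test functions combined with the standard $H^2$ elliptic regularity theorem on the convex smooth domain.

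Setting $M:=\|L^{-1}\|_{L^2\to H^2}$ and letting $C_S$ denote the Sobolev embedding constant, the equation for $v$ becomes the fixed-point problem
\begin{equation*}
v=T(v):=L^{-1}\bigl(f-\kappa^{2}\sinh(v+w)\bigr).
\end{equation*}
I would then verify two estimates on the closed ball $B_R:=\{v\in H^2\cap H_0^1:\|v\|_{H^2}\leq R\}$. For self-mapping, the embedding gives $\|v+w\|_\infty\leq\rho:=C_S(R+\|w\|_{H^2})$, and the pointwise bound $|\sinh z|\leq\cosh|z|$ on $\mathbb{C}$ yields $\|\sinh(v+w)\|_{L^2}\leq|\Omega|^{1/2}\cosh\rho$; hence $\|T(v)\|_{H^2}\leq M\bigl(\|f\|_{L^2}+\|\kappa^{2}\|_\infty|\Omega|^{1/2}\cosh\rho\bigr)$, which is at most $R$ provided $R$, $\|f\|_{L^2}$ and $\|g\|_{H^{3/2}}$ are sufficiently small. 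For contraction, the complex mean-value estimate $|\sinh z_1-\sinh z_2|\leq\cosh(\max(|z_1|,|z_2|))|z_1-z_2|$ together with Sobolev embedding yields
\begin{equation*}
\|T(v_1)-T(v_2)\|_{H^2}\leq M\,\|\kappa^{2}\|_\infty\,\cosh(\rho)\,|\Omega|^{1/2}\,C_S\,\|v_1-v_2\|_{H^2},
\end{equation*}
which is a strict contraction once $R$ is taken small enough. Banach's theorem then produces a unique fixed point in $B_R$, giving the strong solution $u=v+w\in H^2(\Omega)$ together with the local uniqueness statement.

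The main obstacle, and the genuinely new ingredient compared to the convex variational argument of Holst, is the linear solvability of $L$ in the complex-coefficient case: real positivity of the quadratic form is lost, so coercivity must be recovered through an explicit quantitative hypothesis on the phase of $\epsilon$ (for instance, a smallness condition on its imaginary part relative to $\Real\epsilon$ and a Poincar\'e constant of $\Omega$). Once this is in place and $M$ is controlled in terms of $\epsilon,\kappa,\Omega$ alone, the rest of the argument is essentially mechanical; I expect the detailed analyses in \cref{Schauder,Banach} to split precisely along these lines, one focusing on compactness/Schauder for existence and the other on the full contraction for uniqueness. Uniqueness outside $B_R$ must genuinely fail in the complex setting because $\sinh$ is no longer monotone on $\mathbb{C}$ and no global convex energy is available, which accounts for both the smallness requirement on $(f,g)$ and the restriction of the uniqueness claim to a small $H^2$-ball.
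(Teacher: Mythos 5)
There is a genuine gap, and it lies in your choice of linearization. You take $Lv:=-\nabla\cdot(\epsilon\nabla v)$ and leave the \emph{entire} term $\kappa^2\sinh(v+w)$ in the nonlinearity, whereas the paper takes $L=DF(0)$, i.e.\ $Lu=-\nabla\cdot(\epsilon\nabla u)+\kappa^2u$ as in \cref{set-up}, so that the remainder $N(u)=\kappa^2(\sinh u-u)$ vanishes to second order at $u=0$. This difference is decisive. With your splitting, the Nemytskii map $v\mapsto\kappa^2\sinh(v+w)$ has Lipschitz constant on $B_R$ of order $\lVert\kappa^2\rVert_{L^\infty}\cosh\rho\geq\lVert\kappa^2\rVert_{L^\infty}$, because the first-order term of $\sinh$ survives; hence your contraction constant $M\lVert\kappa^2\rVert_{L^\infty}\cosh(\rho)\lvert\Omega\rvert^{1/2}C_S$ does \emph{not} tend to $0$ as $R$ and the data shrink, but to the fixed number $M\lVert\kappa^2\rVert_{L^\infty}\lvert\Omega\rvert^{1/2}C_S$. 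The same problem infects the self-mapping step: even with the sharper bound $|\sinh z|\leq|z|\cosh|z|$ (your bound $|\sinh z|\leq\cosh|z|$ is worse, since it does not even vanish at $z=0$), invariance of $B_R$ for small $R$ forces $M\lVert\kappa^2\rVert_{L^\infty}\lvert\Omega\rvert^{1/2}C_S<1$. So your argument only closes under an unstated smallness condition on $\kappa^2$ relative to $\Omega$ and $\epsilon$, which is not part of the theorem: \cref{h2,h3} allow $\lVert\kappa^2\rVert_{L^\infty}$ arbitrarily large (e.g.\ $\kappa^2$ large and positive real), and the statement asserts existence for all such parameters with only $(f,g)$ small. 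A telltale sign is that your linear operator is invertible under \cref{h2} alone, so \cref{h3} never enters your proof; in the paper \cref{h3} is exactly what makes the correct linearization $-\nabla\cdot(\epsilon\nabla\cdot)+\kappa^2$ coercive (via Lax--Milgram with the Poincar\'e inequality) and hence invertible with the constant $C_H$. Once the $\kappa^2u$ term is moved into $L$, the Lipschitz constant of $N$ on the ball of radius $M$ becomes $\cosh(C_SM)-1$, which does go to $0$ as $M\to0$, and the smallness can be placed entirely on $(f,g)$; this is precisely how \cref{Schauder2,Banach2} are arranged.

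Two smaller remarks. First, your worry that coercivity of the complex form requires a smallness condition on $\operatorname{Im}\epsilon$ is unnecessary: testing with $\overline{u}$ and taking real parts, the paper's ellipticity hypothesis on $\Real\epsilon$ alone gives coercivity, with no constraint on the imaginary part beyond boundedness. Second, the paper does not rely solely on a contraction: existence is obtained by Schauder's fixed point theorem on a closed ball of $H^s(\Omega)$, $s\in(\frac32,2)$, using compactness of the solution map (which needs only \cref{Schauder2}), and the Banach contraction is invoked separately, under the additional condition \cref{Banach2}, to get uniqueness in a small ball. Your all-Banach strategy in $H^2$ would be legitimate for the uniqueness part once the decomposition is corrected, but as written both the invariance and the contraction estimates fail for general admissible $\kappa^2$.
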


Most notably, our result is an analysis of a PDE with complex-valued
functions. This renders a direct application of calculus of variation
to our problem difficult, since $\mathbb{C}$ is not ordered as in
$\mathbb{R}$. One could try to identify $\mathbb{C}$ with
$\mathbb{R}^2$ and apply the variational calculus to a system of two
real-valued equations that is equivalent to \cref{npb}; however, the
real and imaginary parts of $\sinh z = \sinh (x+iy) = \sinh x \cos y +
i \cosh x \sin y$ are not convex on $\mathbb{R}^2$ (in fact,
infinitely oscillating in $y$), and therefore the method of
variational calculus cannot be applied, at least directly. This
difficulty does not arise when the given data (or the coefficient
functions) are real-valued.

Broadly speaking, the smallness constant $r>0$ in the main result is a
reflection of our approach to the problem. Our approach is based on
the topological fixed point argument. The desired fixed point arises
as a consequence of a compact operator that we construct (see
\cref{nonlinearoperator}) where the compactness is contingent upon
choosing $r>0$ sufficiently small.

If the parameters are complex, then a solution to nPBE, even if it
exists, is generally not globally unique in the solution space, in
stark contrast to its real analogue; see \cref{holst}. We construct a
specific example in Appendix B that illustrates the existence of
multiple (smooth) solutions by posing the nPBE on a symmetric
domain. Moreover, we prove that if the technical hypotheses of
\cref{mainresult} are not satisfied then multiple non-trivial
solutions may exist for the homogeneous nPBE (see
\cref{nonunique_solutions} for more details).

This paper is organized as follows. In \cref{Preliminaries}, we
introduce useful notations and fundamental mathematical background. In
\cref{exuniq}, we state and prove the existence and uniqueness
result. In \cref{constantest}, we give estimates for the various
constants used in this paper. In \cref{appendixB}, we discuss the
failure of uniqueness of solution given a large inhomogeneous data.

\section{Preliminaries.}\label{Preliminaries}

\subsection{Sobolev Spaces}

The solution that we desire to obtain is a complex-valued function on
$\Omega$ with certain regularity and integrability, and so we briefly
recall the necessary mathematical background. In this paper, a Banach
space $X$ is assumed to be over $\mathbb{C}$ unless stated
otherwise. As is usual, the integer-order Sobolev space is defined via
weak derivatives. For $k \in \mathbb{N}\cup \{0\}$ and $\Omega
\subseteq \mathbb{R}^d$ open, bounded with a smooth boundary,
\[
	H^k(\Omega) = \{u \in L^2(\Omega): \lVert u
        \rVert_{H^k(\Omega)}<\infty\} \quad \text{with } \lVert u
        \rVert_{H^k(\Omega)} \coloneqq \Big(\sum_{|\alpha| \leq k}
        \lVert \partial_\alpha u \rVert_{L^2}^2\Big)^{\frac{1}{2}}.
\]
For $s^\prime \in [0,1)$, recall the Gagliardo seminorm
\[
	[u]_{s^\prime} \coloneqq \Big(\int_\Omega\int_\Omega
        \frac{|u(x)-u(y)|^2}{|x-y|^{d+2s^\prime}}
        dxdy\Big)^{\frac{1}{2}},
\]
by which fractional Sobolev spaces are defined. Given $s \geq 0$, we
have $s = k + s^\prime$ for $k \in \mathbb{N}\cup\{0\}$ and $s^\prime
\in [0,1)$. Define
\[
	H^s(\Omega) = \{u \in L^2(\Omega): \lVert u \rVert_{H^s} <
        \infty\}\quad \text{with } \lVert u \rVert_{H^s} \coloneqq
        \Big(\lVert u \rVert_{H^k}^2 + [u]_{s^\prime}^2
        \Big)^{\frac{1}{2}},
\]
and $H^s_0(\Omega)$ to be the closure of $C^\infty_c(\Omega)$, the
collection of smooth and compactly supported functions in $\Omega$,
under $\lVert \cdot \rVert_{H^s}$. For a more thorough discussion on
this material including the Sobolev spaces on the boundary $\partial
\Omega$ and the negative-order Sobolev spaces, see \cite[Chapter
  3]{mclean2000strongly}. For a discussion regarding the structural
difference between Banach spaces on $\mathbb{C}$ and $\mathbb{R}$, see
\cite[Chapter 11]{brezis2010functional}.

Now, we comment on the regularity of boundary data. Recall that
$g:\partial \Omega \rightarrow \mathbb{C}$ and $w\in L^2(\Omega)$ such
that $w=g$ in the trace sense. Since our proof heavily relies on the
Elliptic Regularity Theorem, we assume that $w \in H^2(\Omega)$. To
motivate this assumption, consider the linear elliptic PDE $Lu=f$ on
$\Omega$ with $u=g$ on $\partial \Omega$. Assuming that $w$ exists, a
formal calculation reveals that $\tilde{u}+w$ is the solution where
$\tilde{u}$ satisfies $L\tilde{u} = f-Lw$ on $\Omega$ with
$\tilde{u}=0$ on $\partial \Omega$. If $f \in L^2(\Omega)$, we want
$Lw \in L^2(\Omega)$ as well to ensure that $\tilde{u}$ has two more
derivatives than $f-Lw$. Another application of the Elliptic
Regularity Theorem yields $w \in H^2(\Omega)$, which in turn is
guaranteed by assuming $g \in H^{\frac{3}{2}}(\partial \Omega)$ by the
following lemma:
\begin{proposition}\cite[Theorem 3.37]{mclean2000strongly}\label{trace}
	Let $T:C^\infty(\overline{\Omega})\rightarrow C^\infty
        (\partial \Omega)$ be given by $u \mapsto u|_{\partial
          \Omega}$. If $k \in \mathbb{N}$ and $\Omega \in C^{k-1,1}$,
        then $T$ uniquely extends to a surjective bounded linear
        operator from $H^s(\Omega)$ to
        $H^{s-\frac{1}{2}}(\partial\Omega)$ for all $s \in
        (\frac{1}{2},k]$. The trace map $T$ has a right-continuous
inverse.
\end{proposition}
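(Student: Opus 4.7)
The plan is to prove the trace theorem in the standard two-step fashion: first establish it on the half-space $\mathbb{R}^d_+$ via Fourier analysis, then globalize to domains with $C^{k-1,1}$ boundary by straightening $\partial\Omega$ via local charts and a partition of unity.

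For the half-space step, I would first show boundedness of $T$ from $H^s(\mathbb{R}^d_+)$ to $H^{s-1/2}(\mathbb{R}^{d-1})$. Given $u \in \mathcal{S}(\mathbb{R}^d)$ (reducing to the Schwartz case after a standard $H^s$-extension from the half-space and density of $\mathcal{S}$), take the full Fourier transform $\widehat u(\xi', \xi_d)$ and write
\[
  u(x', 0) \;=\; \mathcal{F}_{\xi'}^{-1}\!\Big[\int_{\mathbb{R}} \widehat u(\xi', \xi_d)\,d\xi_d\Big](x').
\]
Applying Cauchy--Schwarz against the weight $(1+|\xi|^2)^{-s}$ and using the elementary identity $\int_{\mathbb{R}} (1+|\xi'|^2+\xi_d^2)^{-s}\,d\xi_d = c(1+|\xi'|^2)^{1/2-s}$, valid precisely for $s > 1/2$, one obtains $\|Tu\|_{H^{s-1/2}}^2 \leq C\|u\|_{H^s}^2$. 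Density of $\mathcal{S}$ in $H^s$ then yields a unique continuous extension of the trace.

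For surjectivity and a continuous right inverse on the half-space, I would construct $E$ explicitly via a Fourier multiplier: fix $\phi \in \mathcal{S}(\mathbb{R})$ with $\phi(0)=1$ and define
\[
  (Eg)(x', x_d) \;=\; \mathcal{F}_{\xi'}^{-1}\!\big[\phi(x_d\langle\xi'\rangle)\,\widehat g(\xi')\big](x'), \qquad \langle\xi'\rangle := (1+|\xi'|^2)^{1/2}.
\]
A direct Fourier-side computation shows $T(Eg) = g$ and $\|Eg\|_{H^s(\mathbb{R}^d_+)} \leq C_\phi\|g\|_{H^{s-1/2}(\mathbb{R}^{d-1})}$, giving both a surjection and a bounded right inverse.

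Finally, I would globalize to $\Omega \in C^{k-1,1}$ by choosing a finite atlas $\{(U_i, \Phi_i)\}$ whose charts straighten $\partial\Omega$, together with a subordinate partition of unity $\{\chi_i\}$. Pulling back each piece $\chi_i u$ via $\Phi_i$ reduces the estimates for $T$ and its right inverse to the half-space case already handled, and summing recovers the global result. The main technical obstacle will be verifying invariance of $H^s(\Omega)$ under the $C^{k-1,1}$ coordinate changes at the endpoint $s=k$: only $k-1$ derivatives of $\Phi_i$ are Lipschitz rather than smooth, so the chain-rule bounds for $\|u \circ \Phi_i\|_{H^k}$ must be established carefully, either by a direct computation using the Gagliardo seminorm or by interpolating between the integer indices $k-1$ and $k$. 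This is precisely where the regularity assumption $C^{k-1,1}$ on the boundary enters the argument.
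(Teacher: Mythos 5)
The paper does not prove this proposition at all: it is quoted directly from the cited reference (McLean, Theorem 3.37) and used as a black box, so there is no internal argument to compare against. Your outline is the standard proof and essentially the one in the cited source --- Fourier-side trace bound on the half-space with the Cauchy--Schwarz weight $(1+|\xi|^2)^{-s}$ (correctly identifying $s>\tfrac12$ as the threshold), the multiplier right inverse $\phi(x_d\langle\xi'\rangle)$ giving surjectivity and a bounded right inverse, and localization by charts and a partition of unity, with the genuine technical point correctly flagged as the invariance of $H^{s}$ under $C^{k-1,1}$ coordinate changes up to the endpoint $s=k$.
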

The map $g\mapsto w$ is not unique although one can uniquely solve the Laplace equation
\[
\begin{split}
	\Delta w&=0,\: x \in \Omega\\
	w&=g,\: x \in \partial \Omega,\nonumber
\end{split}
\]
and obtain an explicit form for $w$ as an integration against the
Poisson kernel, and thereby establish a map $T^{-1}g\coloneqq w$. It
can be shown that $T^{-1}: H^{k-\frac{1}{2}}(\partial
\Omega)\rightarrow H^k(\Omega)$ defines a bounded linear operator
where the operator norm depends on $k\in \mathbb{N}$ and $\Omega$. In
this paper, we are not concerned with this constant; henceforth, given
$g \in H^{\frac{3}{2}}(\partial \Omega)$, we fix $w \in H^2(\Omega)$
and work entirely with functions defined on the domain, not the
boundary.

\subsection{Principal Eigenvalue of the Dirichlet Laplacian}

Throughout this paper, we let $\Omega \subseteq \mathbb{R}^d$ be open,
bounded, convex, and connected with a smooth boundary. Let $|\Omega|$
denote the Lebesgue measure of $\Omega$ and $d_\Omega \coloneqq
\sup_{x,y\in \Omega}|x-y|$, the diameter of $\Omega$. Let
$\left\{\lambda_i\right\}_{i=1}^\infty$ be the eigenvalues of the
(negative) Dirichlet Laplacian $-\Delta$, the Laplacian operator
restricted to functions vanishing on the boundary defined via the
Friedrich extension, where they are ordered such that
\[
	0<\lambda_1<\lambda_2\leq \lambda_3\leq...
\]
The principal eigenvalue is given by
\[
	\lambda_1 = \min_{u \in H^1_0(\Omega)\setminus \{0\}}
        \frac{\int_{\Omega} |\nabla u|^2}{\int_\Omega |u|^2}.
\]
The variational formula above directly implies the Poincar\'e
inequality given below:
\begin{equation}\label{poincare2}
	\lVert u \rVert_{L^2(\Omega)}^2\leq \lambda_1^{-1}\lVert
        \nabla u \rVert_{L^2(\Omega)}^2, \quad \forall u \in
        H^1_0(\Omega).
\end{equation}
\begin{remark}\label{poincaresharp}
	If we further assume that $\Omega$ is convex, then $\lambda_1
        \geq \frac{\pi^2}{d_\Omega^2}$ by
        \cite{payne1960optimal}. Conversely, it can be shown without
        convexity that $\lambda_1 \leq \frac{c}{d_\Omega^2}$ for some
        constant $c=c(\Omega,d)>0$, and therefore $\lambda_1$ is
        bounded above and below by $\frac{1}{d_\Omega^2}$ up to a
        constant.
	
	Now, we justify the last claim. By translation, assume $0\in
        \Omega$. There exists $R>0$ such that $B(0,R) \subseteq
        \Omega$. Denote $U = B(0,R)$ and $U^\prime =
        B(0,\frac{R}{2})$. Let $c_1>0$ such that $R=c_1
        d_\Omega$. Construct a smooth function
        $v:[0,\infty)\rightarrow [0,1]$ such that $v=1$ on
          $[0,\frac{R}{2}]$, $v=0$ on $[R,\infty)$, and $0<v<1$ on
            $(\frac{R}{2},R)$ such that $\sup\limits_{r \in
              [0,\infty)}|v^\prime(r)| = \frac{c_2}{R}$ for some
              $c_2>0$. Define $u(x) = v(|x|)$. Then,
	\[
		\lVert u \rVert_{L^2(\Omega)}^2 = \lVert u
                \rVert_{L^2(U)}^2 \geq |U|^{-1} \lVert u
                \rVert_{L^1(U)}^2 \geq |U|^{-1} |U^\prime|^2 = 2^{-2d}
                |U| = \frac{2^{-2d}|S^{d-1}|R^d}{d},
	\]
	where $S^{d-1}$ is the unit sphere in $\mathbb{R}^d$. On the
        other hand,
	\[
		\int_\Omega |\nabla u|^2 =|S^{d-1}| \int_0^R
                |v^\prime(r)|^2 r^{d-1} dr \leq \frac{c_2^2
                  |S^{d-1}|}{d}R^{d-2},
	\]
	and hence
	\[
		\lambda_1 \leq \frac{\int_\Omega |\nabla
                  u|^2}{\int_\Omega |u|^2} \leq
                2^{2d}\Big(\frac{c_2}{c_1}\Big)^2 d_\Omega^{-2}.
	\]
\end{remark}

\section{Existence and uniqueness.}\label{exuniq}

\subsection{Sketch of Main Results and Assumptions}\label{sketch_assumptions}

To show that a unique solution to \cref{npb} exists we leverage the
Schauder's fixed point theorem. This method is developed by first
rewriting \cref{npb} as the functional equation
\begin{equation}\label{functional_equation}
    F(u) = f
\end{equation}
with \(u\in H^2(\Omega)\) and \(f\in L^2(\Omega)\) and \(F:H^2(\Omega)
\to L^2(\Omega)\) a nonlinear map satisfying \(F(0) = 0\). The map
\(F\) can be decomposed into its linear and nonlinear components. That
is we can take \(F(u) = Lu + N(u)\), where \(L := DF(0)\) and \(N(u)
:= F(u) - Lu\). Assuming that \(L\) is invertible, then
\cref{functional_equation} can be re-expressed as
\[
   u = L^{-1}( f - N(u))
\]
with possible additional terms added to account for boundary
conditions. So our solution \(u\) should be a fixed point of the
mapping \(u \mapsto L^{-1}(f - N(u))\). The typical approach to
finding a fixed point is to demonstrate the map is a contraction and
apply the Banach Fixed Point Theorem to get a unique
solution. However, since \(N(u)\) can increase quadratically in norm
as \(u\) becomes larger in norm (in our case \(\| N(u) \|\) can grow
\emph{exponentially}), it is not possible for us to have this map be a
contraction on \(H^2(\Omega)\). Instead, we restrict the norm of \(u\)
to a smaller space, add additional hypotheses to the parameters of
\cref{npb}, and apply a corollary of Schauder's fixed point theorem to
get existence of a solution.

To apply this argument to \cref{npb} we make the following assumptions
on the parameters of the PDE:

\begin{hypothesis}\label{h1}
	The domain \(\Omega\subset \mathbb R^d\) is open, bounded, and
        convex with smooth (at least \(C^2\)) boundary.
\end{hypothesis}
\begin{hypothesis}\label{h2}
	The function \(\epsilon \in W^{1,\infty}(\Omega, \mathbb
        C^{d^2})\), where \(\epsilon^{ij} = \epsilon^{ji}\), satisfies
        the following uniform ellipticity condition: there exists
        \(\theta > 0\) such that
	\begin{equation}\label{ellipticity}
		\Real\left[ \sum_{i,j=1}^d \epsilon^{ij}(x) \xi_i
                  \overline{\xi_j} \right] \geq \theta |\xi|^2
	\end{equation}
	for all \(\xi\in \mathbb{C}^d\) and for a.e.\ \(x\in \Omega\).
\end{hypothesis}
\begin{hypothesis}\label{h3}
	There exists \(\mu \geq 0\) such that \(\kappa^2 \in
        L^\infty(\Omega)\) satisfies
	\begin{equation}\label{kappa_bound}
		\Real\left[\mathrm{\kappa^2(x)}\right] \geq -\mu
	\end{equation}
	for a.e.\ \(x\in \Omega\). Moreover, the parameters \(\theta\)
        and \(\mu\) characterized by \cref{ellipticity,kappa_bound},
        respectively, satisfy the inequality
	\begin{equation}
		\frac\mu\theta < \lambda_1,
	\end{equation}
	where \(\lambda_1\) is the principal eigenvalue of \(-\Delta\) on \(\Omega\).
\end{hypothesis}

\Cref{h2,h3} are sufficient to guarantee a unique strong solution
within a small ball in \(H^2(\Omega)\), and omitting these hypotheses
may result in non-uniqueness. If the parameters are allowed to
continuously vary until \cref{h2,h3} no longer hold, then there may be
a bifurcation of the unique small solution. For the case where
\(\epsilon\) and \(\kappa^2\) are scalar-valued and \(f\) and \(g\)
set to zero functions, \cref{npb} simplifies to

\begin{equation}\label{scalar_NPBE}
	\begin{aligned}
		-\Delta u + (\eta - \lambda_1) \sinh(u) &= 0 \quad
                \text{ in } \Omega, \\ u & = 0 \quad \text{ on }
                \partial \Omega,
	\end{aligned}
\end{equation}
where \(\eta = \lambda_1 + \kappa^2/\epsilon\in \mathbb{R}\). The
function \(u\equiv 0\) is always a trivial solution for
\cref{scalar_NPBE}. For \cref{scalar_NPBE}, \cref{h2,h3} are satisfied
if and only if \(\eta\) is greater than zero. The parameter \(\eta\)
is the smallest eigenvalue of the \(-\Delta + (\eta-\lambda_1)\), so
this linear operator is non-invertible when \(\eta = 0\). The
following result proved by Crandall and Rabinowitz in
\cite{crandall1971bifurcation} can be used to show that the zero
solution undergoes a bifurcation at \(\eta=0\):

\begin{theorem}\label{CR_theorem}
	Let \(X\) and \(Y\) be Banach spaces and assume
	\begin{enumerate}[label= (\roman*)]
		\item \(F \in C^2(X\times \mathbb R, Y)\),
		\item \(F(0,\eta) = 0\) for all \(\eta \in \mathbb R\),
		\item \(\dim{N(D_xF(0,0))} = \mathrm{codim}{R(D_x F(0,0) )} = 1\), and 
		\item \(D^2_{x\eta} F(0,0) \hat{v}_0 \notin R(D_xF(0,0))\) where \(\hat{v}_0\neq 0\)  is in  \(N(D_xF(0,0))\). \label{item_4}
	\end{enumerate}
	Then there is a nontrivial continuously differentiable curve
	\begin{equation}
		\{(x(s),\eta(s)) \mid s\in (-\delta, \delta) \}
	\end{equation}
	such that \((x(0), \eta(0)) = (0,0)\), \(x'(0) = \hat{v}_0\) and 
	\begin{equation}
		F(x(s), \eta(s)) = 0 \quad \text{ for } \quad s\in(-\delta, \delta).
	\end{equation}
\end{theorem}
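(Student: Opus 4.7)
The plan is to carry out a Lyapunov--Schmidt reduction followed by two applications of the Implicit Function Theorem. By hypothesis (iii), write $X = \mathrm{span}\{\hat{v}_0\} \oplus M$ for some closed complement $M$ and $Y = R \oplus Z$, where $R := R(D_xF(0,0))$ and $Z$ is a one-dimensional complement; let $P : Y \to R$ be the continuous projection onto $R$ along $Z$. Every $x$ near $0$ in $X$ is uniquely $x = s\hat{v}_0 + w$ with $s \in \mathbb{R}$ and $w \in M$, and the equation $F(x,\eta)=0$ splits into the pair
\[
P F(s\hat{v}_0+w,\eta) = 0, \qquad (I-P)F(s\hat{v}_0+w,\eta) = 0.
\]

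Next I would solve the range equation for $w$. Let $G(w,s,\eta) := PF(s\hat{v}_0+w,\eta)$; then $G(0,0,0) = 0$ and $D_w G(0,0,0) = P\, D_xF(0,0)|_M$ is an isomorphism from $M$ onto $R$ by the Lyapunov--Schmidt construction. The Implicit Function Theorem yields a $C^1$ map $(s,\eta) \mapsto w(s,\eta)$ defined near $(0,0)$ with $w(0,0)=0$ solving the range equation locally. Because $F(0,\eta)=0$ for all $\eta$ by (ii), the uniqueness clause of the IFT forces $w(0,\eta) \equiv 0$. Differentiating the identity $G(w(s,\eta),s,\eta)\equiv 0$ at the origin and using $\hat{v}_0 \in N(D_xF(0,0))$ also gives $\partial_s w(0,0) = 0$.

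Substituting $w(s,\eta)$ into the complementary equation yields the scalar bifurcation equation $\Phi(s,\eta) := (I-P)F(s\hat{v}_0 + w(s,\eta),\eta) = 0$ in the one-dimensional space $Z$. Since $\Phi(0,\eta) \equiv 0$, the fundamental theorem of calculus gives a smooth factorization $\Phi(s,\eta) = s\,\Psi(s,\eta)$ with $\Psi(s,\eta) := \int_0^1 \partial_s \Phi(ts,\eta)\,dt$. A direct calculation using $\partial_s w(0,0)=0$ and $D_xF(0,0)\hat{v}_0=0$ shows
\[
\Psi(0,0) = 0, \qquad \partial_\eta \Psi(0,0) = (I-P)\,D^2_{x\eta}F(0,0)\hat{v}_0,
\]
where the would-be contribution $D_xF(0,0)[\partial^2_{s\eta}w(0,0)]$ is annihilated by $(I-P)$ since it lies in $R$. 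Hypothesis (iv) then guarantees $\partial_\eta \Psi(0,0) \neq 0$ in $Z$, so a second application of the Implicit Function Theorem produces a $C^1$ curve $s \mapsto \eta(s)$ with $\eta(0)=0$ and $\Psi(s,\eta(s)) \equiv 0$ on some $(-\delta,\delta)$. Setting $x(s) := s\hat{v}_0 + w(s,\eta(s))$ yields the desired branch, and $x'(0) = \hat{v}_0$ follows from $\partial_s w(0,0)=0$.

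The main technical obstacle I anticipate is ensuring the factored map $\Psi$ retains enough regularity for the final IFT step; this is exactly where the $C^2$ hypothesis on $F$ in (i) is essential, guaranteeing that $\Phi$ is $C^2$ and thus $\Psi$ is $C^1$ with continuous $\eta$-derivative. The key algebraic step linking the abstract transversality hypothesis (iv) to the reduced one-dimensional equation is verifying that the $D_xF(0,0)[\partial^2_{s\eta}w]$ term is killed by $(I-P)$, so that $\partial_\eta \Psi(0,0)$ reduces cleanly to the quantity appearing in (iv) rather than picking up additional obstructions.
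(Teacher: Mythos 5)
Your argument is essentially the standard Lyapunov--Schmidt proof of the Crandall--Rabinowitz theorem, and it is correct; note, however, that the paper does not prove this statement at all --- it quotes it from Crandall and Rabinowitz \cite{crandall1971bifurcation} and only applies it (in \cref{nonunique_solutions}), so there is no in-paper proof to compare against. Your reduction, the use of hypothesis (ii) to get $w(0,\eta)\equiv 0$, the factorization $\Phi = s\Psi$, and the identification $\partial_\eta\Psi(0,0) = (I-P)D^2_{x\eta}F(0,0)\hat v_0$ are exactly the steps of the original proof. Two small points you should make explicit: the splitting $Y = R \oplus Z$ with a bounded projection requires $R(D_xF(0,0))$ to be closed, which is implicit in reading (iii) as a Fredholm-index-zero condition; and in computing $\partial_\eta\Psi(0,0)$ the mixed term $D^2_{xx}F(0,0)[\hat v_0,\partial_\eta w(0,0)]$ also appears, which vanishes because $w(0,\eta)\equiv 0$ gives $\partial_\eta w(0,0)=0$ --- you have the ingredient but do not say why that term drops. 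Finally, nontriviality of the branch ($x(s)\neq 0$ for $s\neq 0$) deserves one line: it follows since $x(s) = s\hat v_0 + w(s,\eta(s))$ with $w \in M$ and $\hat v_0 \notin M$.
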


Here \(D_x\) and \(D_\eta\) represent the Frechet derivatives of \(F\)
with respect to the \(X\) and \(\mathbb R\) components,
respectively. Note that \(D_\eta F(x,\eta) \in \mathcal L (\mathbb R,
Y)\), the set of linear operators from \(\mathbb R\) into \(Y\). An
element \(A \in \mathcal L (\mathbb R,Y)\) can be uniquely associated
with an element \(y\in Y\) by setting \(y\in A(1)\). Thus
\(D_{x\eta}^2F(x,\eta)\) can be associated with an element of
\(\mathcal L(X,Y)\), which is how the map \(D^2_{x\eta}F(0,0)\) is
being viewed in \cref{item_4}. Applying \cref{CR_theorem} gives the
existence of non-unique small solutions.

\begin{theorem}\label{nonunique_solutions}
	Let \(c > 0\). Then there is \(\eta^* < 0\) such that for any
        \(\eta \in [\eta^*, 0)\) there is a non-trivial solution \(u\)
          of \cref{scalar_NPBE} such that \(\|u\|_{H^2} < c\).
\end{theorem}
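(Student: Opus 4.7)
The plan is to identify $\eta = 0$ as a bifurcation point of the trivial solution of \cref{scalar_NPBE} and invoke \cref{CR_theorem} to produce a branch of non-trivial solutions whose $H^2$-norm can be made arbitrarily small by restricting $\eta$ close to $0$. Set $X = H^2(\Omega) \cap H^1_0(\Omega)$, $Y = L^2(\Omega)$, and define
\[
F: X \times \mathbb{R} \to Y, \qquad F(u, \eta) = -\Delta u + (\eta - \lambda_1)\sinh u.
\]
Because $d = 3$ the embedding $H^2(\Omega) \hookrightarrow L^\infty(\Omega)$ holds, so the Nemytskii operator $u \mapsto \sinh u$ maps $X$ smoothly into $Y$ and $F \in C^2(X \times \mathbb{R}, Y)$. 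Hypothesis (ii) is immediate since $\sinh 0 = 0$, giving $F(0, \eta) = 0$ for every $\eta \in \mathbb{R}$.

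For (iii), the Frechet derivative $D_u F(0, \eta) v = -\Delta v + (\eta - \lambda_1) v$ specializes at $\eta = 0$ to $-\Delta v - \lambda_1 v$; simplicity of the principal Dirichlet eigenvalue on the connected domain $\Omega$ makes the kernel one-dimensional, spanned by the normalized positive eigenfunction $\hat{v}_0$. Self-adjointness of $-\Delta - \lambda_1 I$ on $L^2$ forces its range to equal the orthogonal complement of $\hat{v}_0$, which has codimension one. For the transversality (iv), I compute
\[
D^2_{u\eta} F(0, 0)\, \hat{v}_0 = \cosh(0)\,\hat{v}_0 = \hat{v}_0,
\]
which lies in the kernel of $D_u F(0,0)$ and is therefore not in its range. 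Applying \cref{CR_theorem} produces a continuously differentiable curve $\{(u(s), \eta(s)) : s \in (-\delta, \delta)\}$ of solutions with $(u(0), \eta(0)) = (0, 0)$ and $u'(0) = \hat{v}_0$.

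The main obstacle is to verify that the bifurcating branch enters $\{\eta < 0\}$. The oddness of $\sinh$ confers a symmetry $u \mapsto -u$ on \cref{scalar_NPBE}, so the bifurcation is of pitchfork type and the parametrization can be chosen so that $\eta(s)$ is even in $s$; in particular $\eta'(0) = 0$ and $\eta(s) = \tfrac{1}{2} \eta''(0) s^2 + O(s^4)$. A Lyapunov--Schmidt expansion with ansatz $u(s) = s \hat{v}_0 + s^3 w + O(s^5)$, where $w \perp \hat{v}_0$, reduces the equation at order $s^3$ to a solvability condition obtained by testing against $\hat{v}_0$, yielding a closed-form expression for $\eta''(0)$ in terms of $\lambda_1$, $\|\hat{v}_0\|_{L^2}$, and $\int_\Omega \hat{v}_0^4\,dx$; the sign of this quantity decides the direction of the branch.

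Granting that the asymptotic calculation places the branch in $\{\eta < 0\}$, set $\eta^* := \min_{|s| \leq \delta'} \eta(s)$ for some $\delta' \leq \delta$ chosen small enough that $\|u(s)\|_{H^2} < c$ on $[-\delta', \delta']$, which is possible because $s \mapsto u(s)$ is continuous into $H^2(\Omega)$ with $u(0) = 0$. Since $\eta(0) = 0$ and $\eta$ is continuous, the intermediate value theorem applied on $[0, \delta']$ provides, for each target value $\eta_0 \in [\eta^*, 0)$, a parameter $s \neq 0$ with $\eta(s) = \eta_0$; the corresponding $u(s)$ is non-trivial (because $u'(0) = \hat{v}_0 \neq 0$) and satisfies $\|u(s)\|_{H^2} < c$, yielding the stated conclusion.
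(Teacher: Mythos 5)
Your construction coincides with the paper's up to the application of \cref{CR_theorem}: same spaces $X=H^2(\Omega)\cap H^1_0(\Omega)$, $Y=L^2(\Omega)$, same map $F(u,\eta)=-\Delta u+(\eta-\lambda_1)\sinh u$, and essentially the same verification of hypotheses (i)--(iv) (the paper argues (iv) by noting that $\hat v_0\in R(D_uF(0,0))$ would produce a generalized eigenfunction, contradicting the simplicity of $\lambda_1$; your self-adjointness argument is an equivalent way to say the range is the orthogonal complement of $\hat v_0$). So the existence of the $C^1$ curve $(u(s),\eta(s))$ through $(0,0)$ with $u'(0)=\hat v_0$ is obtained exactly as in the paper, and your intermediate-value ending would be an acceptable variant of the paper's inversion of $s\mapsto\eta(s)$.

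The genuine gap is the step you flag yourself with ``granting that the asymptotic calculation places the branch in $\{\eta<0\}$.'' That sign determination is not a loose end; it is the entire content of the theorem. The Crandall--Rabinowitz curve alone only tells you that nontrivial small solutions exist for parameter values on one (undetermined) side of $\eta=0$, so the conclusion for $\eta\in[\eta^*,0)$ simply does not follow from what you have written. The paper closes this step by invoking the derivative formulas of \cite[\S 1.6]{kielhofer2011bifurcation} to obtain $\eta'(0)=0$ and $\eta''(0)<0$, and then inverts the locally monotone map $s\mapsto\eta(s)$. Moreover, the computation you sketch is not a formality that can be granted: since $D^2_{uu}F(0,0)=0$, $D^2_{u\eta}F(0,0)\hat v_0=\hat v_0$, and $D^3_{uuu}F(0,0)[\hat v_0,\hat v_0,\hat v_0]=-\lambda_1\hat v_0^3$, the order-$s^3$ solvability condition in your own Lyapunov--Schmidt ansatz reads $\eta''(0)\int_\Omega \hat v_0^2\,dx=\tfrac{\lambda_1}{3}\int_\Omega \hat v_0^4\,dx$, whose right-hand side is positive; so the direction of the bifurcating branch is precisely the delicate point that has to be computed explicitly and reconciled with the claimed range of $\eta$ (and with the paper's asserted sign of $\eta''(0)$), rather than assumed. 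As written, your proposal stops short of a proof at exactly the step on which the statement stands or falls.
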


\begin{proof}
	Define \(F:H^2(\Omega)\cap H^1_0(\Omega) \times \mathbb R \to L^2(\Omega)\) as 
	\begin{equation}
		F(u,\eta) = -\Delta u + (\eta - \lambda_1) \sinh(u).
	\end{equation}
	Assumptions \((i)\) and \((ii)\) are clearly satisfied. Note
        that \(D_u F(0,0) = -\Delta - \lambda_1\). Thus assumption
        \((iii)\) follows from the Fredholm properties of \(-\Delta\)
        and from the fact that \(\lambda_1\) is the principal
        eigenvalue of \(-\Delta\). Let \(\hat{v}_0\) be be the
        eigenfunction corresponding to \(\lambda_1\). Then assumption
        \((iv)\) states that \(D_{u\eta}^2 F(0,0) \hat{v}_0 =
        \hat{v}_0 \notin R(D_u F(0,0))\). This should hold since if
        \(\hat{v}_0 \in R(D_u F(0,0))\), then there is a generalized
        eigenfunction for \(\lambda_1\) which contradicts the
        simplicity of \(\lambda_1\). Hence, we can apply
        \cref{CR_theorem} to \(F(u,\eta) = 0\) to get a nontrivial
        continuously differentiable curve
	\begin{equation}\label{curve}
		\{(u(s), \eta(s)) \mid s\in(-\delta,\delta)\}.
	\end{equation}
	with \((u(0), \eta(0)) = (0,0)\).
	
	One can also compute derivatives of \(\eta(s)\) (see \cite[\S
          1.6]{kielhofer2011bifurcation} for details) to get that
        \(\eta'(0) = 0\) and \(\eta''(0) < 0\). Thus it is possible to
        choose \(\delta\) small enough so that \(\eta(s)\) is strictly
        decreasing on \(s\in[0,\delta)\). Given \(c>0\), we can find
          \(s^*>0\) small enough to get \(\|u(s)\|_{H^2} \leq c\) for
          all \(s\in (0, s^*]\) by the continuity of the curve. Since
        \(s\mapsto \eta(s)\) is strictly decreasing on \((0,s^*]\), we
        can invert this mapping to get \(\eta\mapsto s(\eta)\) for
        \(\eta\in [\eta^*,0)\) where \(\eta^* = \eta(s^*).\)
          Therefore, for each \(\eta\in [\eta^*,0)\) we have a
            solution of \cref{scalar_NPBE}, given by \(u =
            u(s(\eta))\) such that \(\|u\|_{H^2}<c\).
\end{proof}

\begin{remark}
	There are two non-trivial solutions to \cref{scalar_NPBE} for
        \(\eta< 0\) sufficiently close to zero: one for \(s>0\) and
        one for \(s<0\). In fact, from the oddness of \(\sinh(u)\), if
        \(u\) is a solution of \cref{scalar_NPBE} then so is \(-u\).
\end{remark}

Comparing \cref{nonunique_solutions} with \cref{Banach} shows that
uniqueness cannot be guaranteed without \cref{h2,h3}.  Moreover, this
can also be true for the case of the non-homogeneous NPBE as
demonstrated in \cref{appendixB}.

\subsection{Definitions}

Before showing existence and uniqueness of solutions, we must first
make precise how a function is a weak or strong solution of
\cref{npb}.

\begin{definition}\label{npbweak}
	A function $u \in H^1(\Omega)$ is a \emph{weak solution} to
        \cref{npb} if for all $\phi \in H^1_0(\Omega)$, we have
	\begin{equation}
	\begin{split}
	\label{npbweak2}
		\int_\Omega (\epsilon\nabla u) \cdot \overline{\nabla
                  \phi} + \int_\Omega \kappa^2\sinh u
                \cdot\overline{\phi} &=\int_\Omega f
                \overline{\phi}\\ u|_{\partial\Omega}&=g,
		\end{split}
	\end{equation}
	where the equality at the boundary is in the trace sense; if
        $w \in H^1(\Omega)$ whose trace is $g \in
        H^{\frac{1}{2}}(\partial \Omega)$, a weak solution $u$
        satisfies $u-w \in H^1_0(\Omega)$. If a weak solution $u$ is
        twice weakly-differentiable and satisfies \cref{npb} pointwise
        almost everywhere, then we say $u$ is a \emph{strong
          solution}. We say $u\in H^1(\Omega)$ is a weak solution to
        the linearized Poisson-Boltzmann equation if it satisfies
        \cref{npbweak2} with $\sinh u$ replaced by $u$. We say that
        the nPBE equation is homogeneous if $(f,g) = (0,0)$.
\end{definition}

A weak solution that is in $H^2(\Omega)$ satisfies the strong form
a.e.\ if one can \textit{rewind} the integration by parts in the first
term of \cref{npbweak2}. This is possible since \(\epsilon\in
W^{1,\infty}(\Omega, \mathbb{C}^{d^2})\) . If $u \in H^2(\Omega)$ is a
weak solution, then one can undo the integration by parts since
$\epsilon \nabla u \in H^1(\Omega)$. Indeed for each $1 \leq i,k \leq
d$,
\[
	\lVert \partial_k \sum_{j = 1}^{d} (\epsilon^{ij}\partial_j
        u)\rVert_{L^2} \leq C \lVert \epsilon
        \rVert_{W^{1,\infty}}\lVert u \rVert_{H^2}.
\]

Setting up the functional equation given in \cref{sketch_assumptions},
the non-linear operator \(F:H^2(\Omega)\to L^2(\Omega)\) is given by
\begin{equation*}
    F(u) = -\nabla\cdot(\epsilon \nabla u) + \kappa^2 \sinh(u)
\end{equation*}
so that \(u\in H^2(\Omega)\) is a strong solution to the \cref{npb} if
it satisfies
\begin{equation*}
\begin{aligned}
	F(u) &= f, & x&\in \Omega\\
	u&= g, &  x &\in \partial \Omega.
\end{aligned}
\end{equation*} 
The function \(F\) can be broken up into its linear and non-linear
components so that $F(u) = Lu + N(u)$ with
\begin{align}
	Lu &= -\nabla\cdot(\epsilon\nabla u) + \kappa^2 u \label{set-up} \\
	N(u) &= \kappa^2\sinh(u) = \kappa^2 \sum_{k=2}^\infty n_{k} u^{k},
\end{align}
where $L$ is a linear uniformly elliptic second-order differential
operator by \cref{h2} and $n_{k} = \frac{1}{(k)!}$ for odd \(k\) and
$n_k = 0$ for even \(k\). Note that when \(d\leq 3\), \(H^2(\Omega)\)
is an algebra and so \(u^{k}\in H^2(\Omega)\) and \(N(u)\) is defined.

In order to determine if a strong solution to the NPBE exists, we also
need control of certain fundamental constants of the operators and
Sobolev spaces defined. To simplify the presentation and discussion in
this section we define the relevant constants $C_S(s)$, $C_D$ and
$C_H$.  Explicit estimates for these constants and their dependencies
are derived in detail in Appendix A.

\begin{definition}\label{important_constants}
The Sobolev embedding for $d=3$ and $s>\frac{3}{2}$ gives that
\(H^s(\Omega) \hookrightarrow L^\infty(\Omega)\) \cite[Theorem
  3.26]{mclean2000strongly}. The constant $C_S = C_S(s,\Omega)>0$ is
then defined as the norm of the inclusion operator from
\(H^s(\Omega)\) into \(L^\infty(\Omega)\):
\begin{equation*}
    C_S(s,\Omega) = C_S(s):= \inf\{ C>0 : \|u\|_{L^\infty(\Omega)} \leq C \|u\|_{H^s(\Omega)},\ \forall u \in H^s(\Omega)\}.
\end{equation*}
The linear operator $L$ in \cref{set-up} takes functions in
\(H^2(\Omega)\cap H^1_0(\Omega)\) to functions in \(L^2(\Omega)\) We
shall denote \(C_D>0\) to be the norm of \(L\) with respect to these
spaces:
\begin{equation*}
    C_D := \inf\{ C>0 : \|Lu\|_{L^2(\Omega)} \leq C
    \|u\|_{H^2(\Omega)},\ \forall u \in H^2(\Omega)\}.
\end{equation*}
By \cref{h3}, \(L\) is invertible and so \(L^{-1}:L^2(\Omega) \to
H^2(\Omega) \cap H^1_0(\Omega)\) is defined. Define \(C_H>0\) to be
the norm of \(L^{-1}\):
\begin{equation*}
    C_H := \inf\{ C>0 : \|u\|_{H^2(\Omega)} \leq C
    \|Lu\|_{L^2(\Omega)},\ \forall u \in H^2(\Omega)\}.
\end{equation*}
\end{definition}
\subsection{Main Results}

In this section we present the main ideas of our paper.  We employ a
fix point argument to show the existence of the solution to the
nPBE. This is based on producing a sequence of solutions of linear
elliptic PDEs and showing that such sequence converges to the solution
of the nPBE. We first state two useful lemmas that will be crucial to
our mathematical argument.

\begin{lemma}[Lax-Milgram Theorem ]\cite[Chapter 6]{evans2010partial}\label{LM}
	Given a complex Hilbert space $\mathscr{H}$, let
        $B:\mathscr{H}\times\mathscr{H}\rightarrow\mathbb{C}$ be a
        sesquilinear form that is bounded and coercive. By coercivity,
        suppose there exists $\beta>0$ such that $\Real(B(u,u))\geq
        \beta \lVert u \rVert^2$ for all $u \in \mathscr{H}$. Then,
        for every $f \in \mathscr{H}^\prime$, there exists a unique $u
        \in \mathscr{H}$ such that $B(u,\phi) = \langle f,\phi
        \rangle$ for all $\phi \in \mathscr{H}$ where $\langle
        \cdot,\cdot\rangle$ denotes the dual pairing. Moreover, we
        have $\lVert u \rVert \leq \beta^{-1} \lVert f
        \rVert_{\mathscr{H}^\prime}$.
\end{lemma}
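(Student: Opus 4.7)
The plan is to reduce the statement to the invertibility of a bounded linear operator $A:\mathscr{H}\to\mathscr{H}$ constructed from $B$ via the Riesz representation theorem. First, for each fixed $u\in\mathscr{H}$, the map $v\mapsto B(u,v)$ is a bounded conjugate-linear functional on $\mathscr{H}$, so the complex Riesz representation theorem produces a unique $Au\in\mathscr{H}$ with $B(u,v)=\langle Au,v\rangle$ for all $v\in\mathscr{H}$. Sesquilinearity of $B$ forces $u\mapsto Au$ to be linear, and boundedness of $B$ gives $\|Au\|\le M\|u\|$, where $M$ is the bound on $B$. Likewise, $f\in\mathscr{H}^{\prime}$ corresponds to some $w\in\mathscr{H}$ via $\langle f,v\rangle=\langle w,v\rangle$, reducing the problem to finding $u$ with $Au=w$.

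Next, I would exploit coercivity to establish bijectivity of $A$. For any $u\in\mathscr{H}$, Cauchy--Schwarz together with the coercivity hypothesis gives
\[
\beta\|u\|^2\le \Real(B(u,u))=\Real\langle Au,u\rangle\le \|Au\|\,\|u\|,
\]
so $\|Au\|\ge\beta\|u\|$. This simultaneously yields injectivity of $A$ and closedness of the range $R(A)$. To upgrade to surjectivity, suppose $z\in R(A)^{\perp}$; then $\langle Az,z\rangle=0$, so $0=\Real\langle Az,z\rangle\ge\beta\|z\|^2$, forcing $z=0$. Thus $R(A)^{\perp}=\{0\}$, and because $R(A)$ is closed, $R(A)=\mathscr{H}$.

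Finally, $u:=A^{-1}w$ is the unique solution of $B(u,\phi)=\langle f,\phi\rangle$ for all $\phi\in\mathscr{H}$, and the same lower bound delivers the norm estimate
\[
\|u\|\le \beta^{-1}\|Au\|=\beta^{-1}\|w\|=\beta^{-1}\|f\|_{\mathscr{H}^{\prime}}.
\]
The only genuine obstacle is notational bookkeeping: keeping the conjugate-linearity convention for $B$ aligned with the antilinear Riesz isometry $\mathscr{H}^{\prime}\cong\mathscr{H}$ used throughout. Once that convention is fixed, the argument mirrors the familiar real-Hilbert-space proof, with the real part $\Real(B(u,u))$ playing the role that $B(u,u)$ itself plays in the real case.
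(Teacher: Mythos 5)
Your proof is correct. The paper does not prove this lemma at all---it simply cites \cite[Chapter 6]{evans2010partial}---and your argument is precisely the standard Lax--Milgram proof from that source (Riesz representation to define $A$ with $B(u,v)=\langle Au,v\rangle$, coercivity giving $\lVert Au\rVert\geq\beta\lVert u\rVert$, hence injectivity and closed range, and the orthogonal-complement argument for surjectivity), correctly adapted to the complex sesquilinear setting where only $\Real(B(u,u))$ is assumed coercive; the final estimate $\lVert u\rVert\leq\beta^{-1}\lVert f\rVert_{\mathscr{H}^\prime}$ follows exactly as you state.
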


For the readers' convenience, we also state fixed point theorems that
we need later.
\begin{lemma}\label{fixedpointthm}
	Let $X$ be a Banach space over $\mathbb{R}$ or $\mathbb{C}$,
        and let $A:X \rightarrow X$.
	\begin{enumerate}
		\item (Banach's Fixed Point Theorem) If there exists
                  $\gamma \in (0,1)$ such that $\lVert A[u] -
                  A[\tilde{u}]\rVert \leq \gamma \lVert u - \tilde{u}
                  \rVert$ for all $u,\tilde{u} \in X$, then there
                  exists a unique $u_0 \in X$ such that $A[u_0]=u_0$.
		\item \cite[Corollary 11.2]{gilbarg2015elliptic} Let
                  $K \subseteq X$ be closed and convex. If
                  $A:K\rightarrow K$ and $\{A[u]: u \in K\}$ is
                  precompact, then there exists $u_0 \in K$ such that
                  $A[u_0] = u_0$.
	\end{enumerate}
\end{lemma}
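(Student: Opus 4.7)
The plan is to establish each part of \cref{fixedpointthm} with a classical argument from nonlinear functional analysis.

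For part (1), I would use the Picard iteration scheme. Pick any $u_0 \in X$ and define recursively $u_{n+1} := A[u_n]$. The contraction hypothesis yields inductively $\|u_{n+1}-u_n\| \leq \gamma^n \|u_1 - u_0\|$, and a geometric-series estimate gives $\|u_m - u_n\| \leq \gamma^n (1-\gamma)^{-1}\|u_1-u_0\|$ for all $m>n$, showing that $(u_n)$ is Cauchy. Since $X$ is complete, $u_n \to u^\ast$ for some $u^\ast \in X$. As $A$ is Lipschitz (hence continuous), passing to the limit in the recursion gives $u^\ast = A[u^\ast]$. Uniqueness is immediate: if $u^\ast, v^\ast$ are both fixed points, then $\|u^\ast - v^\ast\| = \|A[u^\ast] - A[v^\ast]\| \leq \gamma \|u^\ast - v^\ast\|$, and since $\gamma < 1$ this forces $u^\ast = v^\ast$.

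For part (2), the natural route is to reduce to the classical Schauder fixed point theorem, which itself is proved by finite-dimensional approximation and Brouwer's theorem. Since $\overline{A[K]}$ is compact, for each $\varepsilon > 0$ choose an $\varepsilon$-net $\{v_1,\ldots,v_{N_\varepsilon}\} \subseteq A[K] \subseteq K$ and construct the Schauder projection $P_\varepsilon : \overline{A[K]} \to \operatorname{conv}\{v_1,\ldots,v_{N_\varepsilon}\}$ via a partition of unity subordinate to the $\varepsilon$-balls centered at the $v_i$. Convexity of $K$ guarantees that $\operatorname{conv}\{v_1,\ldots,v_{N_\varepsilon}\} \subseteq K$, so $P_\varepsilon \circ A$ maps the finite-dimensional compact convex set $\operatorname{conv}\{v_1,\ldots,v_{N_\varepsilon}\}$ continuously into itself, and Brouwer's theorem yields a fixed point $u_\varepsilon$. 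Along a sequence $\varepsilon_k \to 0$, the compactness of $\overline{A[K]}$ produces a subsequential limit $u_{\varepsilon_k} \to u^\ast \in K$, and the uniform bound $\|P_{\varepsilon_k}(y) - y\| < \varepsilon_k$ on $\overline{A[K]}$ together with continuity of $A$ promotes $u^\ast$ to a genuine fixed point of $A$.

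The hard part is entirely in (2): carefully constructing the Schauder projection so that its image remains in $K$ (which is where the convexity assumption is essential), and then verifying that the diagonal limit of approximate fixed points is a true fixed point of $A$ rather than merely an approximate one. Since the paper explicitly cites Corollary 11.2 of Gilbarg and Trudinger for part (2), I would expect the presentation here to be purely referential, appealing to Schauder's theorem without reproducing the topological machinery; the first part, while elementary, may similarly be left as a well-known statement with a pointer to \cite{evans2010partial}.
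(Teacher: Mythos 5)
Your proposal is correct and uses exactly the standard arguments (Picard iteration for part (1), Schauder projection plus Brouwer for part (2)); the paper itself offers no proof, stating the lemma purely referentially with a citation to Gilbarg--Trudinger, just as you anticipated. The only point worth flagging is that part (2) as stated in the paper tacitly requires continuity of $A$ on $K$ (as in the cited Corollary 11.2), which your argument correctly invokes when passing from approximate to genuine fixed points.
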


We now show that under \cref{h1,h2,h3} there exists a unique solution
to the linear PBE. Note that in the rest of the discussion in this
section we assume that \cref{h1,h2,h3} are always true.

\begin{lemma}\label{laxmilgram}
	Let $L$ be as in \cref{set-up} and let $f\in L^2(\Omega)$. Fix
        $w \in H^2(\Omega)$ whose trace is $g \in
        H^{\frac{3}{2}}(\partial\Omega)$. Then, there exists a unique
        $u \in H^1(\Omega)$ such that $u = g$ in the trace sense and
	\[
		\int_\Omega (\epsilon \nabla u)\cdot
                \overline{\nabla\phi} + \kappa^2 u \overline{\phi} =
                \int_\Omega f\overline{\phi},
	\]
	for all $\phi \in H^1_0(\Omega)$. 
\end{lemma}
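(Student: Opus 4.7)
The plan is to reduce the problem to a homogeneous Dirichlet boundary value problem and then apply the Lax–Milgram theorem (\cref{LM}). First, I would introduce $\tilde u := u - w \in H^1_0(\Omega)$, so that establishing existence and uniqueness of $u$ with trace $g$ becomes equivalent to establishing existence and uniqueness of $\tilde u \in H^1_0(\Omega)$ satisfying the shifted weak formulation
\[
\int_\Omega (\epsilon \nabla \tilde u)\cdot\overline{\nabla\phi} + \kappa^2 \tilde u\,\overline\phi
\;=\; \int_\Omega f\,\overline\phi - \int_\Omega (\epsilon\nabla w)\cdot\overline{\nabla\phi} - \int_\Omega \kappa^2 w\,\overline\phi
\]
for all $\phi\in H^1_0(\Omega)$.

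Next I would define the sesquilinear form $B:H^1_0(\Omega)\times H^1_0(\Omega)\to\mathbb{C}$ by
\[
B(v,\phi) := \int_\Omega (\epsilon\nabla v)\cdot\overline{\nabla\phi} + \int_\Omega \kappa^2 v\,\overline\phi,
\]
together with the antilinear functional
\[
\langle F,\phi\rangle := \int_\Omega f\,\overline\phi - \int_\Omega (\epsilon\nabla w)\cdot\overline{\nabla\phi} - \int_\Omega \kappa^2 w\,\overline\phi.
\]
Boundedness of $B$ on $H^1_0(\Omega)$ follows routinely from $\epsilon\in W^{1,\infty}\subset L^\infty$ (only the $L^\infty$ bound is needed here) and $\kappa^2\in L^\infty$ together with Cauchy–Schwarz. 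Boundedness of $F$ on $H^1_0(\Omega)$ follows similarly, using $f\in L^2(\Omega)$ and $w\in H^2(\Omega)\subset H^1(\Omega)$.

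The heart of the argument, and the one non-routine step, is coercivity. Using \cref{h2},
\[
\Real B(v,v) \;\geq\; \theta\,\lVert\nabla v\rVert_{L^2}^2 + \Real\int_\Omega \kappa^2|v|^2,
\]
and by \cref{h3} the second term is bounded below by $-\mu\lVert v\rVert_{L^2}^2$. Applying the Poincaré inequality \cref{poincare2} on $H^1_0(\Omega)$ gives
\[
\Real B(v,v) \;\geq\; \theta\,\lVert\nabla v\rVert_{L^2}^2 - \mu\lambda_1^{-1}\lVert\nabla v\rVert_{L^2}^2 \;=\; \Bigl(\theta - \tfrac{\mu}{\lambda_1}\Bigr)\lVert\nabla v\rVert_{L^2}^2,
\]
and the hypothesis $\mu/\theta < \lambda_1$ from \cref{h3} guarantees that the prefactor is strictly positive. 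Since $\lVert\nabla \cdot\rVert_{L^2}$ is equivalent to $\lVert\cdot\rVert_{H^1}$ on $H^1_0(\Omega)$ (again by Poincaré), $B$ is coercive on $H^1_0(\Omega)$.

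Finally, \cref{LM} yields a unique $\tilde u\in H^1_0(\Omega)$ with $B(\tilde u,\phi) = \langle F,\phi\rangle$ for all $\phi\in H^1_0(\Omega)$. Setting $u := \tilde u + w \in H^1(\Omega)$ produces a function whose trace equals $g$ and which satisfies the desired identity. Uniqueness transfers back from uniqueness of $\tilde u$: any two solutions $u_1,u_2$ with trace $g$ have $u_1-u_2\in H^1_0(\Omega)$ and satisfy $B(u_1-u_2,\phi)=0$, which forces $u_1 = u_2$ by the coercivity already established.
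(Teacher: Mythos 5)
Your proposal is correct and follows essentially the same route as the paper: shift by $w$ to reduce to a homogeneous Dirichlet problem, verify boundedness and coercivity of the sesquilinear form $B$ via \cref{h2}, \cref{h3} and the Poincar\'e inequality \cref{poincare2}, then apply \cref{LM} and set $u=\tilde u + w$. The only cosmetic difference is that you keep the right-hand side in weak form (integrating $\epsilon\nabla w$ against $\nabla\phi$) where the paper writes it as $\int (f-Lw)\overline{\phi}$, and you spell out the uniqueness transfer and the equivalence of $\lVert\nabla\cdot\rVert_{L^2}$ with the $H^1$-norm on $H^1_0(\Omega)$, which the paper leaves implicit.
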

\begin{proof}
	Define a sesquilinear form $B:H^1_0(\Omega)\times
        H^1_0(\Omega)\rightarrow\mathbb{C}$ given by
	\[
		B(u,\phi) = \int_\Omega (\epsilon \nabla u)\cdot
                \overline{\nabla\phi} + \kappa^2 u \overline{\phi},
	\]
	which defines a bounded operator. Additionally, $B$ is
        coercive since
	\begin{equation}\label{LM2}
	\begin{split}
		\Real[B(u,u)] &= \int \Real[(\epsilon \nabla
                  u)\cdot\overline{\nabla u}] + \int
                \Real(\kappa^2)|u|^2 \\ &\geq \theta \int |\nabla u|^2
                - \mu \int |u|^2\geq (\theta - \mu \lambda_1^{-1})\int
                |\nabla u|^2,
		\end{split}
	\end{equation}
	where the last inequality is by \cref{poincare2}. Hence, there
        exists a unique $\tilde{u}\in H^1_0(\Omega)$ that satisfies
	\[
		B(\tilde{u},\phi) = \int (f-Lw)\overline{\phi},
	\]
	for all $\phi \in H^1_0(\Omega)$ by \cref{LM}. The proof is
        complete by taking $u \coloneqq \tilde{u}+w$.
\end{proof}

A key ingredient in our approach is elliptic regularity. Let $L$ be as
in \cref{set-up} and consider solving $Lu=f \in L^2(\Omega)$ with
$g=0$ on $\partial \Omega$. From the standard elliptic theory (for
instance, see \cite[Section 6.3, Theorem 4]{evans2010partial}), there
exists $C_H>0$ as described in \cref{important_constants} such that
$\lVert u \rVert_{H^2} \leq C_H \lVert f \rVert_{L^2}$. For general
boundary data, we have

\begin{lemma}\label{ellipticregularity}
	The unique weak solution of the linear PBE, $u \in
        H^1(\Omega)$, satisfies
	\begin{equation}\label{ellipticregularity2}
		\lVert u \rVert_{H^2} \leq C_H \lVert f \rVert_{L^2} +
                (C_HC_D + 1)\lVert w \rVert_{H^2}.
	\end{equation}
\end{lemma}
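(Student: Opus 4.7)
The plan is to reduce to the homogeneous boundary-data case, where the standard elliptic regularity estimate with constant $C_H$ applies, and then absorb the boundary correction through the operator norm $C_D$.

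First I would invoke \cref{laxmilgram} to write $u = \tilde{u} + w$ with $\tilde{u} \in H^1_0(\Omega)$ the unique weak solution of
\[
	B(\tilde{u},\phi) = \int_\Omega (f - Lw)\overline{\phi} \quad \text{for all } \phi \in H^1_0(\Omega).
\]
Because $\epsilon \in W^{1,\infty}$ and $w \in H^2(\Omega)$, the right-hand side $f - Lw$ lies in $L^2(\Omega)$, so the hypotheses of the standard elliptic regularity theorem (cf.\ \cite[Section 6.3, Theorem 4]{evans2010partial}, together with \cref{h1,h2,h3}) are met. This upgrades $\tilde{u}$ to an element of $H^2(\Omega)\cap H^1_0(\Omega)$ satisfying $L\tilde{u} = f - Lw$ almost everywhere, and supplies the bound
\[
	\lVert \tilde{u} \rVert_{H^2} \leq C_H \lVert f - Lw \rVert_{L^2},
\]
by the very definition of $C_H$ in \cref{important_constants}.

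Second, I would apply the triangle inequality together with the operator-norm bound $\lVert Lw \rVert_{L^2} \leq C_D \lVert w \rVert_{H^2}$ from \cref{important_constants}, yielding
\[
	\lVert \tilde{u} \rVert_{H^2} \leq C_H \lVert f \rVert_{L^2} + C_H C_D \lVert w \rVert_{H^2}.
\]
Finally, since $u = \tilde{u} + w$, one more application of the triangle inequality in $H^2(\Omega)$ gives
\[
	\lVert u \rVert_{H^2} \leq \lVert \tilde{u} \rVert_{H^2} + \lVert w \rVert_{H^2} \leq C_H \lVert f \rVert_{L^2} + (C_H C_D + 1) \lVert w \rVert_{H^2},
\]
which is exactly \cref{ellipticregularity2}.

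There is no real obstacle here beyond checking that the $H^2$ elliptic regularity theorem truly applies under \cref{h1,h2,h3}: one needs the convex $C^2$ domain (\cref{h1}) and the $W^{1,\infty}$-coefficient uniform ellipticity (\cref{h2}) to justify that weak solutions of $L\tilde{u} = F \in L^2$ with zero trace lie in $H^2$, and one needs the coercivity established in \cref{LM2} (which uses \cref{h3}) to ensure $L^{-1}$ exists and has finite operator norm $C_H$. Once those hypotheses are acknowledged, the remaining estimate is a one-line chain of triangle inequalities.
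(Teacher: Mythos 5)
Your proposal is correct and follows essentially the same route as the paper: absorb the boundary data by replacing $f$ with $f-Lw$, apply elliptic regularity to the zero-trace problem, and recover the constant $(C_HC_D+1)$ from the bound $\lVert Lw\rVert_{L^2}\leq C_D\lVert w\rVert_{H^2}$ and the triangle inequality for $u=\tilde{u}+w$. The paper states this more tersely, but the decomposition and the use of $C_H$, $C_D$ are identical.
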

\begin{proof}
	As in the proof of \cref{laxmilgram}, absorb the boundary data
        into the inhomogeneous term by replacing $f$ by $f-Lw$ and
        considering the zero Dirichlet boundary condition. Then, an
        application of the Elliptic Regularity Theorem yields the
        estimate \cref{ellipticregularity2}.
\end{proof}

To estimate the non-linear term in $L^2(\Omega)$, we work with
functions with sufficiently high Sobolev regularity.

\begin{lemma}
	Let $s>\frac{d}{2}$. Then for every $u\in H^s(\Omega)$,
	\begin{equation}\label{gn}
		\lVert N(u) \rVert_{L^2} \leq \lVert \kappa^2
                \rVert_{L^\infty}|\Omega|^\frac{1}{2}\sum_{k=2}^\infty
                |n_k|(C_S(s) \lVert u \rVert_{H^s})^k.
	\end{equation}
	For $N(u)$ for \cref{npb}, we have
	\[
		\lVert N(u)\rVert_{L^2} \leq \lVert \kappa^2
                \rVert_{L^\infty}|\Omega|^{\frac{1}{2}}\Big(\sinh(C_S(s)
                \lVert u \rVert_{H^s}) - C_S(s) \lVert u
                \rVert_{H^s}\Big).
	\]
\end{lemma}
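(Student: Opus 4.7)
The plan is to work directly from the series expansion of $N(u)$. Writing $N(u) = \kappa^2(\sinh u - u) = \kappa^2\sum_{k=2}^\infty n_k u^k$, I would first pull out the $L^\infty$ norm of $\kappa^2$:
\[
\lVert N(u)\rVert_{L^2} \leq \lVert \kappa^2\rVert_{L^\infty}\,\bigl\lVert\textstyle\sum_{k=2}^\infty n_k u^k\bigr\rVert_{L^2}.
\]
Then I would apply the Minkowski (triangle) inequality to the series inside the $L^2$ norm to get
\[
\bigl\lVert\textstyle\sum_{k=2}^\infty n_k u^k\bigr\rVert_{L^2}\leq \sum_{k=2}^\infty |n_k|\,\lVert u^k\rVert_{L^2}.
\]
The one point worth checking here is the exchange of norm and infinite sum, which follows from monotone/dominated convergence once we have a pointwise bound on the partial sums: since $u\in H^s(\Omega)$ with $s>d/2$, Sobolev embedding gives $u\in L^\infty(\Omega)$, and the power series for $\sinh$ converges absolutely, so $|u(x)|^k$ is dominated summably at a.e.\ $x$.

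Next, I would estimate each $\lVert u^k\rVert_{L^2}$ by H\"older (or just by $L^\infty$ domination on a set of finite measure):
\[
\lVert u^k\rVert_{L^2}\leq |\Omega|^{1/2}\,\lVert u\rVert_{L^\infty}^k,
\]
and apply the Sobolev embedding $H^s(\Omega)\hookrightarrow L^\infty(\Omega)$ from \cref{important_constants} to obtain $\lVert u\rVert_{L^\infty}\leq C_S(s)\lVert u\rVert_{H^s}$. Plugging these into the previous display produces
\[
\lVert N(u)\rVert_{L^2}\leq \lVert \kappa^2\rVert_{L^\infty}\,|\Omega|^{1/2}\sum_{k=2}^\infty |n_k|\,\bigl(C_S(s)\lVert u\rVert_{H^s}\bigr)^k,
\]
which is the first claimed bound.

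For the second bound, I would use the explicit values $n_k=1/k!$ for odd $k\geq 3$ and $n_k=0$ otherwise. Setting $t\coloneqq C_S(s)\lVert u\rVert_{H^s}\geq 0$, the resulting series is exactly the Taylor expansion of $\sinh t - t$ around $0$:
\[
\sum_{k=2}^\infty |n_k|\,t^k=\sum_{\substack{k\geq 3\\k\text{ odd}}}\frac{t^k}{k!}=\sinh(t)-t.
\]
Substituting $t=C_S(s)\lVert u\rVert_{H^s}$ gives the desired estimate. There is no serious obstacle in this argument; the only subtle point, as noted above, is the justification of the term-by-term $L^2$ bound for the infinite series, which is handled by the pointwise $L^\infty$ control coming from Sobolev embedding together with dominated convergence.
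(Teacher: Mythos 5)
Your proposal is correct and follows essentially the same route as the paper: triangle inequality on the series, H\"older/$L^\infty$ domination giving the factor $|\Omega|^{1/2}$, the Sobolev embedding constant $C_S(s)$, and then summing the explicit coefficients to recognize $\sinh t - t$. The added remark justifying the interchange of norm and sum is a fine (and harmless) extra detail that the paper leaves implicit.
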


\begin{proof}
	\[
		\lVert N(u)\rVert_{L^2} \leq \lVert \kappa^2
                \rVert_{L^\infty}\sum_{k=2}^\infty |n_k| \lVert u^k
                \rVert_{L^2} \leq \lVert \kappa^2
                \rVert_{L^\infty}|\Omega|^{\frac{1}{2}}\sum_{k=2}^\infty
                |n_k| \lVert u\rVert_{L^\infty}^k\leq \lVert \kappa^2
                \rVert_{L^\infty}|\Omega|^\frac{1}{2}\sum_{k=2}^\infty
                |n_k|(C_S(s) \lVert u \rVert_{H^s})^k,
	\]
	where the inequalities are by the triangle inequality, the
        H\"older's inequality, and the Sobolev inequality,
        respectively.
\end{proof}

\begin{remark}
	By working in Sobolev algebras, we bypass the problem of
        whether $N(u) \in L^2(\Omega)$ or not, for $u \in
        L^\infty(\Omega)$. Note that Holst \cite[Chapter 2]{Holst1994}
        bypasses this issue as well, not by working with more regular
        functions as we do, but by constructing a conditional action
        functional on $H^1_0(\Omega)$. Such an approach does not apply
        in our setting where complex-valued functions are studied.
	
	For $d=3$, we have the embedding $H^2(\Omega)\hookrightarrow
        L^\infty(\Omega)$, but such embedding for $H^1(\Omega)$ does
        not hold. For $d \geq 3$, it is straightforward to construct
        an example of $u \in H^1(\Omega)$ such that $N(u) = \kappa^2
        (\sinh u -u) \notin L^2(\Omega)$. For simplicity, take $\Omega
        = \mathbb{R}^d$ and $\kappa=1$. For $R>0$, define $u(x) =
        |x|^{-\alpha}\zeta(x)$ where $\alpha = \frac{d}{2}-1-\epsilon$
        with $\epsilon \ll 1$ and $\zeta \in C^\infty_c(B(0,R))$ is a
        smooth non-negative function such that $\zeta = 1$ on
        $\overline{B(0,\frac{R}{2})}$. If $N(u) \in L^2(\Omega)$, then
        $\sinh (u(\cdot)) \in L^2(\Omega)$. Since $\sinh u \geq
        \frac{u^N}{N!}$ for every odd $N \geq 1$, we have $u^N \in
        L^2(\Omega)$. However, this is false due to the blow-up of $u$
        at the origin.
\end{remark}
To prove the existence of a solution, define a non-linear operator $A:
C^\infty_c(\Omega)\rightarrow H^2(\Omega)$ where for every $u \in
C^\infty_c(\Omega)$, $A(u)$ satisfies
\begin{equation}
\label{nonlinearoperator}
\begin{split}
	L(A(u)) &= f-N(u),\:x\in \Omega\\
	A(u)&= g,\: x \in \partial \Omega.
	\end{split}
\end{equation}
Denoting $K: L^2(\Omega)\rightarrow H^1_0(\Omega)\cap H^2(\Omega)$ to
be the inverse of $L$ stated in \cref{ellipticregularity}, we conclude
\[
	A(u) = K(f-N(u)-Lw)+w.
\]

Equivalently, the operator $A$ defines an iteration map on some Banach
space where each iterate is a unique solution to the linear PBE. More
precisely, let $u_0 \in H^2(\Omega)$ be the solution for the
linearized nPBE; by \cref{laxmilgram}, there exists a unique weak
solution $u_0 \in H^1(\Omega)$ and by \cref{ellipticregularity}, $u_0
\in H^2(\Omega)$. By the Sobolev embedding theorem,
$H^2(\Omega)\hookrightarrow C^{0,\frac{1}{2}}(\overline{\Omega})$, and
therefore $N(u_0)\in L^2(\Omega)$. Then, consider
\begin{equation}\label{approxsol}
\begin{split}
	Lu_k + N(u_{k-1}) &= f,\: k \geq 1,\\
	u_k &= g,
	\end{split}
\end{equation}
or equivalently, $u_k = A(u_{k-1})$.

There are three possible outcomes. Firstly, the sequence is divergent,
a dead end. Secondly, the sequence is convergent to a function that
does not solve \cref{npb} in any meaningful way; see
\cite{brezis2007nonlinear}. In contrast to the first two situations,
we show that $\left\{u_k\right\}$ converges to a solution of
\cref{npb}. We show that $A$ uniquely extends to a compact operator on
fractional Sobolev spaces. In the rest of this section, we assume
$d=3$.

\begin{proposition}\label{nonlinearcompact}
	$A:L^\infty(\Omega)\rightarrow H^2(\Omega)$ is
  continuous. Consequently, $A$ is continuous on $H^s(\Omega)$ for
  every $s \in (\frac{3}{2},2]$. Furthermore, $A$ is compact on
          $H^s(\Omega)$ for every $s\in (\frac{3}{2},2)$.
\end{proposition}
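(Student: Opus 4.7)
The plan is to reduce all three claims to estimates on the nonlinear part $N(u)=\kappa^2\sinh u$, since $A(u)=K(f-N(u)-Lw)+w$ where $K=L^{-1}:L^2(\Omega)\to H^2(\Omega)\cap H^1_0(\Omega)$ is a bounded linear operator with norm $C_H$. Thus for any $u,v$,
\[
\|A(u)-A(v)\|_{H^2}\leq C_H\|N(u)-N(v)\|_{L^2},
\]
so the continuity and compactness properties of $A$ will follow from those of $N$ into $L^2(\Omega)$, composed with $K$ and an appropriate Sobolev embedding.

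For the first assertion I would use the identity $\sinh u-\sinh v=(u-v)\int_0^1\cosh(v+t(u-v))\,dt$ together with $\|\kappa^2\|_{L^\infty}<\infty$ to obtain a pointwise bound
\[
|N(u)(x)-N(v)(x)|\leq \|\kappa^2\|_{L^\infty}\cosh\!\bigl(\max(\|u\|_{L^\infty},\|v\|_{L^\infty})\bigr)|u(x)-v(x)|.
\]
Integrating and using $|\Omega|<\infty$ yields $\|N(u)-N(v)\|_{L^2}\leq |\Omega|^{1/2}\|\kappa^2\|_{L^\infty}\cosh(M)\|u-v\|_{L^\infty}$ whenever $\|u\|_{L^\infty},\|v\|_{L^\infty}\leq M$, so $N$ is locally Lipschitz from $L^\infty(\Omega)$ into $L^2(\Omega)$, hence $A:L^\infty(\Omega)\to H^2(\Omega)$ is locally Lipschitz and in particular continuous.

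For the second assertion, the Sobolev embedding $H^s(\Omega)\hookrightarrow L^\infty(\Omega)$ for $s>3/2=d/2$ with norm $C_S(s)$ (see \cref{important_constants}) shows that any convergent sequence $u_n\to u$ in $H^s(\Omega)$ converges to $u$ in $L^\infty(\Omega)$ as well and stays bounded there. Applying the first assertion yields $A(u_n)\to A(u)$ in $H^2(\Omega)\hookrightarrow H^s(\Omega)$, establishing continuity of $A$ on $H^s(\Omega)$ for $s\in(3/2,2]$.

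For compactness on $H^s(\Omega)$ with $s\in(3/2,2)$, I would verify that $A$ sends bounded sets of $H^s(\Omega)$ to precompact sets. If $B\subset H^s(\Omega)$ is bounded, then $B$ is bounded in $L^\infty(\Omega)$ with a uniform bound $M$, and consequently
\[
\|A(u)\|_{H^2}\leq C_H\bigl(\|f\|_{L^2}+|\Omega|^{1/2}\|\kappa^2\|_{L^\infty}\sinh(M)+C_D\|w\|_{H^2}\bigr)+\|w\|_{H^2}
\]
for all $u\in B$, so $A(B)$ is bounded in $H^2(\Omega)$. The Rellich--Kondrachov theorem gives a compact embedding $H^2(\Omega)\hookrightarrow H^s(\Omega)$ for $s<2$, so $A(B)$ is precompact in $H^s(\Omega)$. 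Combined with the continuity established above, this proves compactness. The only subtle point is that $\cosh$ grows exponentially, but this is harmless once one localizes to $L^\infty$-bounded sets; the exponential growth only controls the size of the Lipschitz constant as a function of $M$, not the continuity itself, which is the content of the smallness hypothesis elsewhere in the paper.
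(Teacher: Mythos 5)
Your proposal is correct and follows the same overall skeleton as the paper: reduce everything to the nonlinearity via $A(u)=K(f-N(u)-Lw)+w$, prove continuity from $L^\infty(\Omega)$ into $H^2(\Omega)$, transfer it to $H^s(\Omega)$ through the embedding $H^s(\Omega)\hookrightarrow L^\infty(\Omega)$, and obtain compactness by showing $A$ maps $H^s$-bounded sets into $H^2$-bounded sets and invoking the compact embedding $H^2(\Omega)\hookrightarrow H^s(\Omega)$ for $s<2$. Where you genuinely diverge is the key estimate on $N(u)-N(v)$: the paper expands $N$ in its power series, telescopes $u_n^k-u^k$, and sums the resulting series using a Cauchy-integral-formula bound on the coefficients $|n_k|$, whereas you use the identity $\sinh u-\sinh v=(u-v)\int_0^1\cosh\bigl(v+t(u-v)\bigr)\,dt$ (valid for complex-valued $u,v$ since $\sinh$ is entire, together with $|\cosh z|\leq\cosh|z|$) to get a local Lipschitz bound with constant $\|\kappa^2\|_{L^\infty}\cosh(M)$ in one line. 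Your route is shorter and yields the stronger local Lipschitz statement, essentially the same quantity that reappears later in the contraction estimate of \cref{Banach}; the paper's series argument, on the other hand, applies verbatim to any nonlinearity given by an entire power series (relevant to the remark about $e^u$ and $\cosh u$), though a mean-value bound of your type also covers those cases. One small bookkeeping point: in the paper $N(u)=\kappa^2(\sinh u-u)$, i.e.\ the series starts at $k=2$; subtracting the linear term only improves your pointwise bound, so none of your estimates is affected.
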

\begin{proof}
	Let $u_n\xrightarrow[n\rightarrow\infty]{}u$ in
        $L^\infty(\Omega)$ where $u_n,u \in L^\infty(\Omega)$. There
        exists $N\in\mathbb{N}$ such that if $n \geq N$, then $\lVert
        u_n \rVert_{L^\infty} \leq 2 \lVert u
        \rVert_{L^\infty}$. Then,
	\begin{equation}\label{continuityest}
	\begin{split}
		\lVert A(u_n)-A(u)\rVert_{H^2} &= \lVert K(N(u_n)-N(u))\rVert_{H^2} \leq C_H \lVert N(u_n)-N(u)\rVert_{L^2} \\
		&\leq C_H \lVert \kappa^2 \rVert_{L^\infty}\sum_{k=2}^\infty |n_k|\lVert u_n^k - u^k\rVert_{L^2} \\
		&\leq C_H \lVert \kappa^2 \rVert_{L^\infty}\lVert u_n-u\rVert_{L^\infty} \sum_{k=2}^\infty |n_k|\sum_{j=0}^{k-1} \lVert u_n^{k-1-j}u^j\rVert_{L^2} \\
		&\leq C_H \lVert \kappa^2 \rVert_{L^\infty}|\Omega|^{\frac{1}{2}}\lVert u_n-u\rVert_{L^\infty} \sum_{k=2}^\infty |n_k| \sum_{j=0}^{k-1} \lVert u_n \rVert_{L^\infty}^{k-1-j} \lVert u \rVert_{L^\infty}^j \\
		&\leq C_H \lVert \kappa^2 \rVert_{L^\infty}|\Omega|^{\frac{1}{2}}\lVert u_n-u\rVert_{L^\infty} \sum_{k=2}^\infty |n_k| \lVert u \rVert_{L^\infty}^{k-1} (2^k-1).
		\end{split}
	\end{equation}
	The proof is done if $u = 0$, so assume $u \neq 0$ and let $R
        > 4\lVert u \rVert_{L^\infty}$. If we show that the series in
        \cref{continuityest} converges, then the proof is complete. By
        the Cauchy integral formula, we obtain an upper bound on
        $|n_k|$
	\[
		|n_k| \leq \frac{\max\limits_{|z|=R}|N(z)|}{R^{k}},
	\]
	and combining this bound with \cref{continuityest}, the
        infinite sum is a convergent geometric series, and therefore
        the desired continuity has been shown.
	
	The rest follows from the Sobolev embedding and the
        Rellich-Kondrachov compactness theorem. Indeed, the continuity
        of $A$ on $H^s(\Omega)$ follows by considering the embedding
        $H^s(\Omega)\hookrightarrow L^\infty(\Omega)$. Compactness of
        $A$ on $H^s(\Omega)$ is immediate once we show $A$ sends a
        bounded subset of $H^s(\Omega)$ into a bounded subset of
        $H^2(\Omega)$. For $\lVert u \rVert_{H^s}\leq M$,
	\begin{equation}\label{brouwer0}
	\begin{split}
		\lVert A(u)\rVert_{H^2} &= \lVert K(f-N(u)-Lw)+w\rVert_{H^2} \leq C_H \lVert f-N(u)-Lw \rVert_{L^2}+\lVert w \rVert_{H^2} \\
		&\leq C_H \lVert f \rVert_{L^2} + (C_HC_D+1)\lVert w \rVert_{H^2} + C_H \lVert N(u)\rVert_{L^2} \\
		&\leq C_H \lVert f \rVert_{L^2} + (C_HC_D+1)\lVert w \rVert_{H^2} + C_H \lVert \kappa^2 \rVert_{L^\infty}  |\Omega|^\frac{1}{2}\sum_{k=2}^\infty |n_k|(C_S M)^k, 
	\end{split}
	\end{equation}
	where \cref{brouwer0} follows from \cref{gn}.
\end{proof}

\begin{figure}[ht]
		\centering
		\newcounter{j} %
		\begin{tikzpicture}  
			[>=latex',scale=7, declare function={%
				p(\t)= greater(\t,0.5)  ? 1 : 
				0.3256 + sinh( 2 * \t ) - 2 * \t;} ]
			\draw[color=RoyalBlue,samples at={0,0.01,...,0.62}] plot (\x,{0.3256 + sinh( 2 * \x ) - 2 * \x});  
			\draw[color=olive](0,0)--(0.65,0.65);
			\draw[->](0,0)--(0,0.65) node[above]{$y$};
			\draw[->](0,0)--(0.65,0) node[right]{$M$};

			\draw[color=RoyalBlue,dotted,line width=0.8pt]%
			(0.5,0.5)--(0.5,0) node[below=8pt]{$M_0$};
			
			\node at (0.43,0.65) {$y = F(M,y_0)$};
			\node at (0.65,0.52) {$y = M$};
			\node at (-0.025,0.3256) {$y_0$};
		\end{tikzpicture}
		\caption{The tangency condition of \cref{small} where
                  $y_0 = C_H \lVert f \rVert_{L^2} + (C_HC_D+1)\lVert
                  w \rVert_{H^2}$ and $F$ is the LHS of
                  \cref{small}}\label{figure}
\end{figure} 

Now we apply the a priori estimate above to obtain a strong solution
to \cref{npb}.

\begin{theorem}\label{Schauder}
	Let $s\in (\frac{3}{2},2)$ and $M>0$ satisfy
	\begin{equation}\label{Schauder2}
		C_H \lVert f \rVert_{L^2} + (C_HC_D+1)\lVert w
                \rVert_{H^2} + C_H \lVert \kappa^2 \rVert_{L^\infty}
                |\Omega|^\frac{1}{2}\sum_{k=2}^\infty |n_k|(C_S(s)
                M)^k\leq M.
	\end{equation}
	Then, there exists a strong solution $u \in H^2(\Omega)$ to
        \cref{npb} with $\lVert u \rVert_{H^s} \leq M$.
\end{theorem}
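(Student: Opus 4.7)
The plan is to invoke the Schauder-type fixed point statement \cref{fixedpointthm}(2), using the nonlinear operator $A$ from \cref{nonlinearoperator}. The ambient Banach space will be $H^s(\Omega)$ and the closed convex set will be the ball $K := \{u \in H^s(\Omega) : \|u\|_{H^s} \leq M\}$. Two properties must be checked: (i) $A$ maps $K$ into itself, and (ii) $A(K)$ is precompact in $H^s(\Omega)$.

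For (ii), the compactness of $A$ on $H^s(\Omega)$ for $s \in (3/2, 2)$ is precisely the content of \cref{nonlinearcompact}, so the image of the bounded set $K$ is automatically precompact. For (i), I reuse the estimate \cref{brouwer0} obtained during the proof of \cref{nonlinearcompact}: for any $u \in K$,
\[
\|A(u)\|_{H^2} \leq C_H\|f\|_{L^2} + (C_HC_D + 1)\|w\|_{H^2} + C_H\|\kappa^2\|_{L^\infty}|\Omega|^{1/2}\sum_{k=2}^\infty |n_k|(C_S(s) M)^k,
\]
and the hypothesis \cref{Schauder2} bounds the right-hand side by $M$. Combined with the continuous embedding $H^2(\Omega) \hookrightarrow H^s(\Omega)$ (whose norm can be absorbed into $C_S(s)$ or taken to be unity in the paper's convention), this gives $\|A(u)\|_{H^s} \leq M$, so $A(u) \in K$.

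With both hypotheses of \cref{fixedpointthm}(2) in force, there exists $u \in K$ with $A(u) = u$. Since the range of $A$ lies in $H^2(\Omega)$, this fixed point is automatically in $H^2(\Omega)$, and inserting $u = A(u)$ into \cref{nonlinearoperator} shows that $Lu + N(u) = f$ holds a.e.\ in $\Omega$ and $u = g$ on $\partial \Omega$, i.e., $u$ is a strong solution of \cref{npb} with $\|u\|_{H^s} \leq M$. The proof is essentially mechanical once \cref{nonlinearcompact} is available; the main conceptual content lives in the hypothesis \cref{Schauder2}, which (as suggested by the tangency picture in \cref{figure}) is the condition that the graph of the nonlinear bound as a function of $M$ intersects the diagonal $y = M$ at some admissible $M_0 > 0$ --- a condition that ultimately requires the data $(f,w)$ to be small.
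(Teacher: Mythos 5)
Your proposal is correct and follows essentially the same route as the paper: apply the Schauder-type fixed point result of \cref{fixedpointthm}(2) on the closed ball $\overline{B_{H^s}(0,M)}$, using the estimate \cref{brouwer0} together with \cref{Schauder2} for the self-mapping property, \cref{nonlinearcompact} for continuity and compactness, and the smoothing of $A$ to conclude $u \in H^2(\Omega)$. The only point the paper also leaves implicit, and which you flag appropriately, is the harmless passage from the $H^2$ bound to the $H^s$ bound.
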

\begin{proof}
	Let $\lVert u \rVert_{H^s} \leq M$. Then, a similar argument
        to \cref{brouwer0} yields $\lVert A(u)\rVert_{H^s} \leq M$. By
        Schauder's fixed point theorem (\cref{fixedpointthm}) on
        $\overline{B_{H^s}(0,M)}$, a closed convex subset of
        $H^s(\Omega)$ on which $A$ is continuous and compact by
        \cref{nonlinearcompact}, we obtain $u \in
        \overline{B_{H^s}(0,M)}$ such that $A(u)=u$. Since $A$ is
        smoothing, $u \in H^2(\Omega)$.
\end{proof}

Since the existence result above is a consequence of the Schauder
fixed point theorem, no uniqueness is guaranteed. However, a more
restrictive assumption on the parameters yields uniqueness.

\begin{theorem}\label{Banach}
	Let $s\in (\frac{3}{2},2)$ and $M>0$ satisfy
        \cref{Schauder2}. Further assume
	\begin{equation}\label{Banach2}
		C_H C_S(s) \lVert \kappa^2
                \rVert_{L^\infty}|\Omega|^{\frac{1}{2}}
                \sum_{k=2}^\infty k |n_k| (C_S(s) M)^{k-1}<1.
	\end{equation}
	Then, there exists a strong solution $u \in H^2(\Omega)$ to
        \cref{npb} that is unique in $\overline{B_{H^s}(0,M)}\subseteq
        H^s(\Omega)$.
\end{theorem}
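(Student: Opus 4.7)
The plan is to combine Theorem~\ref{Schauder} for existence with Banach's fixed point theorem for uniqueness. Since hypothesis \eqref{Schauder2} is still in force, Theorem~\ref{Schauder} already delivers a strong solution $u\in H^2(\Omega)$ with $\lVert u\rVert_{H^s}\leq M$, and Theorem~\ref{Schauder}'s proof shows that $A$ sends $\overline{B_{H^s}(0,M)}$ into itself. The remaining task is therefore to upgrade the continuity estimate of \cref{nonlinearcompact} into a genuine contraction estimate on $\overline{B_{H^s}(0,M)}$ under the additional hypothesis \eqref{Banach2}.

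First I would take $u,\tilde u\in \overline{B_{H^s}(0,M)}$ and observe that $A(u)-A(\tilde u)=K(N(\tilde u)-N(u))$, so that
\[
    \lVert A(u)-A(\tilde u)\rVert_{H^s} \leq \lVert A(u)-A(\tilde u)\rVert_{H^2} \leq C_H\,\lVert N(u)-N(\tilde u)\rVert_{L^2},
\]
using $s\leq 2$ and the definition of $C_H$. Next I would factor each power pointwise via $u^k-\tilde u^k=(u-\tilde u)\sum_{j=0}^{k-1}u^{k-1-j}\tilde u^{j}$, take absolute values, and estimate $\sum_{j=0}^{k-1}|u|^{k-1-j}|\tilde u|^j\leq k\,(C_S(s)M)^{k-1}$ by Sobolev embedding $H^s\hookrightarrow L^\infty$. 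Applying H\"older and then the Sobolev inequality once more to $\lVert u-\tilde u\rVert_{L^\infty}\leq C_S(s)\lVert u-\tilde u\rVert_{H^s}$, I obtain
\[
    \lVert N(u)-N(\tilde u)\rVert_{L^2}\leq \lVert\kappa^2\rVert_{L^\infty}|\Omega|^{\frac{1}{2}}\,C_S(s)\lVert u-\tilde u\rVert_{H^s}\sum_{k=2}^\infty k|n_k|(C_S(s)M)^{k-1},
\]
so that combining with the previous estimate gives
\[
    \lVert A(u)-A(\tilde u)\rVert_{H^s}\leq \gamma\,\lVert u-\tilde u\rVert_{H^s},\qquad \gamma:=C_HC_S(s)\lVert\kappa^2\rVert_{L^\infty}|\Omega|^{\frac{1}{2}}\sum_{k=2}^\infty k|n_k|(C_S(s)M)^{k-1}.
\]
By \eqref{Banach2}, $\gamma<1$.

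Finally, since $\overline{B_{H^s}(0,M)}$ is a closed subset of the Banach space $H^s(\Omega)$, hence complete under the induced metric, and $A$ maps it into itself by Theorem~\ref{Schauder}, I would invoke the Banach fixed point theorem (\cref{fixedpointthm}(1)) to conclude that $A$ has a unique fixed point $u\in \overline{B_{H^s}(0,M)}$. This fixed point is a weak solution of \cref{npb}, and the smoothing property of $K$ (as in the final step of \cref{Schauder}) places $u$ in $H^2(\Omega)$, yielding the strong solution.

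The only real subtlety is convergence of the series defining $\gamma$, but since $n_k=1/k!$ for odd $k$ and zero otherwise, $\sum_{k\geq 2} k|n_k|(C_S(s)M)^{k-1}$ is dominated termwise by $\cosh(C_S(s)M)$ and is therefore finite; hypothesis \eqref{Banach2} is then a direct quantitative smallness condition on $M$ (and on $\kappa^2$, $C_H$, $\Omega$). I do not anticipate a serious analytic obstacle beyond carefully mimicking the factorization used in \cref{continuityest}.
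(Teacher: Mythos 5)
Your proposal is correct and follows essentially the same route as the paper: the paper likewise repeats the factorization estimate from \cref{continuityest} to obtain the contraction bound \cref{Banach3} with constant given by the left-hand side of \cref{Banach2}, and then applies Banach's fixed point theorem on the closed ball $\overline{B_{H^s}(0,M)}$, which $A$ maps into itself by the argument of \cref{Schauder}. Your added remarks on completeness of the closed ball and on the convergence of the series (summing to $\cosh(C_S(s)M)-1$) merely make explicit what the paper leaves implicit.
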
 
\begin{proof}
	We invoke the Banach fixed point theorem by showing that $A$
        defines a strict contraction on
        $\overline{B_{H^s}(0,M)}$. Following the steps leading to
        \cref{continuityest}, we obtain
	\begin{equation}\label{Banach3}
		\lVert A(u)-A(v)\rVert_{H^s} \leq \Big(C_H C_S(s)
                \lVert \kappa^2
                \rVert_{L^\infty}|\Omega|^{\frac{1}{2}}
                \sum_{k=2}^\infty k |n_k| (C_S M)^{k-1}\Big)\lVert u-v
                \rVert_{H^s},
	\end{equation}
	and recall that the infinite series is $\cosh (C_S M)-1$ in \cref{Banach3}.
\end{proof}
\begin{remark}
    When $N(u) = \sinh u$, there exists $M>0$ that satisfies
    \cref{Schauder2} if and only if
\begin{equation}\label{small}
    C_H \lVert f \rVert_{L^2} + (C_HC_D+1)\lVert w \rVert_{H^2} + C_H
    \lVert \kappa^2 \rVert_{L^\infty} |\Omega|^{\frac{1}{2}}(\sinh C_S
    M_0 - C_S M_0)\leq M_0
\end{equation}
where $M_0 = C_S^{-1} \cosh^{-1}(1+\frac{1}{C_H \lVert \kappa^2
  \rVert_{L^\infty} |\Omega|^{1/2}C_S})$, which is independent of $\|
f\|_{L^2}, \|w \|_{H^2}$. Moreover, the condition in \cref{Banach2} is
equivalent to $M<M_0$. The case when the equality of \cref{small}
holds is illustrated in \cref{figure}. The equality occurs precisely
when the LHS of \cref{small}, as a function of $M$, is tangent to the
identity.
\end{remark}
\begin{remark}
    As can be shown in \cref{Schauder2,Banach2}, our fixed point
    approach works for small data where the given parameters must be
    small measured in various norms. For complexified nPBE, however,
    this restriction is necessary if we wish to preserve uniqueness of
    solution; see \cref{nonunique}. On the other hand, our approach
    establishes existence and uniqueness for a wide class of
    nonlinearities that are complex-analytic in $u$. For instance,
    nonlinearities of the form $e^u,\cosh{u}$ fall under this
    category.
\end{remark}

\section{Discussion.}\label{conclusion}

In this paper, we have studied the existence and uniqueness theory of
the complexified nPBE equation. The biggest difference between our
model and the real-valued nPBE equation stems from the non-convexity
of the nonlinearity on $\mathbb{C}$, which makes it difficult to
directly apply variational calculus to our model. In fact, for the
real-valued nPBE, we have

\begin{proposition}\cite[Theorem 2.14]{Holst1994}\label{holst}
	Let $\Omega \subseteq \mathbb{R}^3$ be open and bounded with a
        Lipschitz boundary. Let $\epsilon = diag(\tilde{\epsilon}(x))$
        where $\tilde{\epsilon} \in L^\infty(\Omega,\mathbb{R})$ with
        $\epsilon_1 \leq \tilde{\epsilon}(x) \leq \epsilon_2$ with
        $0<\epsilon_1 \leq \epsilon_2$ for all $x\in \Omega$, and let
        $\kappa \in L^\infty(\Omega,\mathbb{R})$. For every $f\in
        L^2(\Omega,\mathbb{R})$ and real-valued $g \in
        H^{\frac{1}{2}}(\partial\Omega)$, there exists a unique weak
        solution to \cref{npb} in $H^1(\Omega)$.
\end{proposition}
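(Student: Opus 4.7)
The plan is to follow the classical convex variational approach used by Holst, which is available in the real case because $\sinh$ is monotone nondecreasing on $\mathbb{R}$ and its antiderivative $\cosh u - 1$ is strictly convex. First, I would reduce to homogeneous boundary data by choosing $w \in H^1(\Omega)$ with trace $g$ (possible by \cref{trace}) and writing $u = v + w$ with $v \in H^1_0(\Omega,\mathbb{R})$. The natural energy functional is
\begin{equation*}
J(v) = \int_\Omega \tfrac{1}{2}\,\epsilon(x)|\nabla(v+w)|^2 + \kappa(x)^2\bigl[\cosh(v+w)-1\bigr] - f\,(v+w)\,dx,
\end{equation*}
whose formal Euler--Lagrange equation is exactly the weak form of \cref{npb}.

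Next, I would verify the ingredients of the direct method on $H^1_0(\Omega,\mathbb{R})$: \emph{strict convexity} (from the uniform ellipticity $\tilde\epsilon \geq \epsilon_1 > 0$ and strict convexity of $\cosh$); \emph{coercivity} (the term $\int \epsilon_1 |\nabla v|^2$ dominates the cross terms in $w$ and $f$ by Cauchy--Schwarz and Poincaré, while $\kappa^2[\cosh(v+w)-1] \geq 0$ only helps); and \emph{weak lower semicontinuity} (convexity plus continuity for the quadratic piece, linearity for the $f$-piece, and Fatou's lemma applied to an a.e.\ convergent subsequence for the nonnegative $\cosh$-piece).

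The main obstacle is integrability: in $d=3$ we have only $H^1 \hookrightarrow L^6$, so $\cosh(v+w)$ need not even lie in $L^1$ for a generic $v \in H^1_0(\Omega)$, and $J$ could be identically $+\infty$ on large subsets of $H^1_0$. I would handle this by the method of sub- and super-solutions, which is available precisely because $\sinh$ is monotone. One produces $u_\pm \in L^\infty(\Omega) \cap H^1(\Omega)$ with $u_- \leq u_+$ a.e., satisfying $-\nabla\!\cdot\!(\epsilon\nabla u_-) + \kappa^2 \sinh u_- \leq f \leq -\nabla\!\cdot\!(\epsilon\nabla u_+) + \kappa^2 \sinh u_+$ weakly and $u_- \leq g \leq u_+$ on $\partial\Omega$; concretely, once $w$ is fixed, shifted constants $\pm C$ work provided $C$ is large enough that $\kappa^2 \sinh(\pm C)$ dominates $f$ and boundary values (with a standard truncation of $g$ and $w$ if necessary). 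Restricting $J$ to the closed convex set $\mathcal{A} = \{v \in H^1_0(\Omega) : u_- \leq v+w \leq u_+ \text{ a.e.}\}$, the nonlinear term is uniformly bounded by $\cosh(\|u_-\|_{L^\infty}\vee\|u_+\|_{L^\infty})$, so $J$ is finite and continuous on $\mathcal{A}$ and the direct method yields a minimizer $v^\ast \in \mathcal{A}$. A one-sided variation argument (using that on the set where the barriers are not attained one can test with $\pm\varepsilon \phi$ for $\phi \in C_c^\infty(\Omega)$) then shows $v^\ast + w$ satisfies the weak Euler--Lagrange equation.

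For uniqueness, I would rely on strict monotonicity: given two weak solutions $u_1,u_2 \in H^1(\Omega)$ with the same boundary trace, testing the difference of their weak equations against $u_1-u_2 \in H^1_0(\Omega)$ gives
\begin{equation*}
\int_\Omega (\epsilon\,\nabla(u_1-u_2))\cdot\nabla(u_1-u_2) + \kappa^2\,(\sinh u_1 - \sinh u_2)(u_1-u_2)\,dx = 0,
\end{equation*}
and both integrands are pointwise nonnegative (the first by uniform ellipticity, the second by monotonicity of $\sinh$). Hence $\nabla(u_1-u_2) = 0$ a.e., and the vanishing boundary trace forces $u_1 = u_2$ in $H^1(\Omega)$ by Poincaré.
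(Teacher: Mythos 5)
The paper itself offers no proof of \cref{holst}: it is imported verbatim from \cite[Theorem 2.14]{Holst1994}, and the paper's remark following the estimate for $N(u)$ records Holst's actual device, namely a ``conditional'' action functional on $H^1_0(\Omega)$ --- the convex energy is allowed to take the value $+\infty$ and is handled by convex analysis (properness, coercivity, weak lower semicontinuity via Fatou, then one-sided variations with \emph{bounded} test functions and monotone convergence to recover the weak equation from the minimizer, whose finite energy gives $\kappa^2\cosh u\in L^1(\Omega)$). Your overall plan --- convex energy plus monotonicity of $\sinh$ --- is the same in spirit, and your uniqueness paragraph is fine as stated, since \cref{npbweak} builds the integrability of $\kappa^2(\sinh u_i)\overline{\phi}$ for $\phi\in H^1_0(\Omega)$ into the notion of weak solution, so testing with $u_1-u_2$ is legitimate.

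The genuine gap is the barrier step you use to tame the $\cosh$ term. Under the hypotheses of \cref{holst}, $f$ is only in $L^2(\Omega)$ and $g$ only in $H^{\frac{1}{2}}(\partial\Omega)$, so neither is bounded and no constant $C$ gives $\kappa^2\sinh(C)\ge f$ a.e.\ or $-C\le g\le C$; worse, $\kappa$ is merely in $L^\infty(\Omega)$ with no positive lower bound (in the physical PBE $\kappa\equiv 0$ on the molecular region), so on the set $\{\kappa=0\}$ the supersolution inequality reads $0\ge f$ and fails for every $C$ wherever $f>0$. Consequently the order interval $\mathcal{A}$ need not contain a solution (a weak solution need not lie in $L^\infty(\Omega)$ at all), and the one-sided variation argument, which requires $u_\pm$ to be genuine sub/supersolutions, is unavailable; ``truncating $g$ and $w$'' changes the problem, and you give no limiting argument back to the original data. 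To repair the existence part in the stated generality you should either follow the extended-real-valued functional route sketched above, or add hypotheses (e.g.\ $f\in L^\infty(\Omega)$, $g$ bounded, $\kappa^2$ bounded below on the support of $f$) that are not part of the statement.
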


A good control of nPBE with complex-valued coefficients has
consequences in uncertainty quantification. In particular, if it is
shown that the solution to the nPBE is analytic in a well-defined
region with respect to a collection of stochastic parameters, then the
regularity of the solution can be precisely determined. This is
important for computing the statistics of a linear bounded Quantity of
Interest of the solution $u$ with respcet to high dimensional
stochastic parameters
\cite{nobile2008a,Castrillon2021,Castrillon2016}.  If the sequence of
approximate solutions $\{u_n\}$, given by \cref{approxsol}, is also
complex-analytic with respect to the stochastic parameters, then the
solution $u$ will also be complex-analytic in the same region. Since
$u_n$ is the solution of a linear elliptic PDE, there already exist
detailed studies of the analytic properties of $u_n$ with respect to
stochastic diffusion coefficients and random domains
\cite{babusk_nobile_temp_10,Castrillon2016,nobile2008a}.

We remark that our work has a room for improvements. In the
Debye-H\"uckel model, a collection of macromolecules such as proteins
is located in the region $\Omega_1\subseteq\mathbb{R}^3$, surrounded
by the ion-exclusion layer $\Omega_2$, which in turn is surrounded by
the solvent of positive and negative charges in
$\Omega_3$. Altogether, let $\Omega := \cup_{i=1}^3 \Omega_i$. In an
equilibrium, a well-defined potential function of the system gives
rise to a well-defined dielectric constant $\epsilon(x)$. Conversely,
we wish to study the properties of solution given a dielectric
constant. According to the Debye-H\"uckel model,
\begin{equation}\label{dielectric}
	\epsilon(x) = 
	\begin{cases}
		\epsilon_1>0, &x \in \Omega_1\\
		\epsilon_2>0, &x \in \Omega_2 \cup \Omega_3.
	\end{cases}
\end{equation}
We remark that our analysis assumes that $\epsilon$ is
Lipschitz-continuous, and therefore does not cover the case
\cref{dielectric}. The Lipschitz-continuity assumption plays a crucial
role in obtaining the elliptic regularity results such as \cref{C_H0}
and \cref{C_H00}. For now, we leave this interesting question open.

\begin{appendices}
\crefalias{section}{appsec}

\section{Estimates for the Constants.}\label{constantest}

In \cref{Schauder,Banach}, the existence and uniqueness of solutions
of \cref{npb} depend in part on the values of the constants
\(C_S(s)\), \(C_H\), and \(C_D\) described in
\cref{important_constants}. Thus having explicit estimates for these
constants is important in determining the parameter values for which
there are solutions. In this section, we demonstrate bounds for these
constants. \Cref{h1,h2,h3} are still assumed to hold throughout
\cref{constantest}.

\subsection{Estimates for $C_D$}

	\begin{lemma}
	Let $L$ be as in \cref{set-up}. Then
	\begin{equation}\label{bounded}
		 C_D \leq 2d^2 \lVert \epsilon \rVert_{W^{1,\infty}} +
                 \lVert \kappa \rVert_{L^\infty}^2.
	\end{equation}
\end{lemma}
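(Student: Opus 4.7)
The statement is a routine operator-norm estimate for the second-order linear operator $Lu = -\nabla \cdot (\epsilon \nabla u) + \kappa^2 u$. The plan is to expand $Lu$ using the product rule, apply the triangle inequality in $L^2$, and then bound each resulting term using H\"older's inequality together with the definitions of the $W^{1,\infty}$, $L^\infty$, and $H^2$ norms.

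Concretely, for $u \in H^2(\Omega)$ I would write
\[
Lu = -\sum_{i,j=1}^d (\partial_i \epsilon^{ij})\,\partial_j u \;-\; \sum_{i,j=1}^d \epsilon^{ij}\,\partial_i\partial_j u \;+\; \kappa^2 u,
\]
which is legitimate in $L^2$ because $\epsilon \in W^{1,\infty}(\Omega,\mathbb{C}^{d^2})$. Each of the $d^2$ terms in the first sum is bounded by $\lVert\partial_i \epsilon^{ij}\rVert_{L^\infty}\lVert\partial_j u\rVert_{L^2} \leq \lVert\epsilon\rVert_{W^{1,\infty}} \lVert u\rVert_{H^2}$; each of the $d^2$ terms in the second sum by $\lVert\epsilon^{ij}\rVert_{L^\infty}\lVert\partial_i\partial_j u\rVert_{L^2} \leq \lVert\epsilon\rVert_{W^{1,\infty}} \lVert u\rVert_{H^2}$; and the zeroth-order term by $\lVert\kappa^2 u\rVert_{L^2} \leq \lVert\kappa\rVert_{L^\infty}^2 \lVert u\rVert_{L^2} \leq \lVert\kappa\rVert_{L^\infty}^2 \lVert u\rVert_{H^2}$. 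Summing these contributions produces $\lVert Lu\rVert_{L^2} \leq (2d^2\lVert\epsilon\rVert_{W^{1,\infty}} + \lVert\kappa\rVert_{L^\infty}^2)\lVert u\rVert_{H^2}$, from which the claimed inequality on $C_D$ follows by taking the infimum in its definition.

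There is no genuine obstacle here; the argument is entirely routine. The only minor point worth flagging is that the product-rule expansion must be interpreted in the sense of distributions, which is immediately justified by the Lipschitz regularity of $\epsilon$ (already invoked in the paper immediately before \cref{important_constants}). The factor $2d^2$ in the final bound is simply the combined count of terms produced by distributing the divergence across the $d \times d$ matrix $\epsilon$.
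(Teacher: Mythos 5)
Your argument is correct and follows essentially the same route as the paper: expand the divergence by the product rule, apply the triangle inequality term by term, and bound each term by H\"older with $\lVert\epsilon\rVert_{W^{1,\infty}}$ or $\lVert\kappa^2\rVert_{L^\infty}$ against $\lVert u\rVert_{H^2}$, yielding the factor $2d^2$ from the $d^2$ first-order and $d^2$ second-order contributions. No discrepancies worth noting.
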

\begin{proof}
	Since
	\[
		\lVert \kappa^2 u \rVert_{L^2} \leq \lVert \kappa^2
                \rVert_{L^\infty} \lVert u \rVert_{L^2} \leq \lVert
                \kappa^2 \rVert_{L^\infty} \lVert u \rVert_{H^2},
	\]
	it suffices to estimate $\lVert \nabla \cdot (\epsilon \nabla u)\rVert_{L^2}$. By the triangle inequality,
	\[
	\begin{split}
		\lVert \nabla \cdot (\epsilon \nabla u)\rVert_{L^2} &=
                \lVert \sum_{i,j} \partial_i (\epsilon^{ij}\partial_j
                u) \rVert_{L^2} \leq \sum_{i,j} \lVert \partial_i
                (\epsilon^{ij}\partial_j u) \rVert_{L^2} \leq
                \sum_{i,j} \lVert \partial_i \epsilon^{ij} \partial_j
                u \rVert_{L^2} + \lVert \epsilon^{ij} \partial_{ij}u
                \rVert_{L^2}\\ &\leq \sum_{i,j} (\lVert \partial_i
                \epsilon^{ij} \rVert_{L^\infty} + \lVert \epsilon^{ij}
                \rVert_{L^\infty}) \lVert u \rVert_{H^2} \leq 2d^2
                \lVert \epsilon \rVert_{W^{1,\infty}} \lVert u
                \rVert_{H^2},
		\end{split}
	\]
	and hence \cref{bounded}.
\end{proof}

\subsection{Estimates for $C_S(2)$.}

In this subsection, we are interested in obtaining an upper bound of
the operator norm of $H^2(\Omega)\hookrightarrow L^\infty(\Omega)$
where $\Omega \subseteq \mathbb{R}^3$. To obtain this Sobolev
inequality constant, a standard trick is to obtain the desired
constant for the full domain $\mathbb{R}^d$. Any reasonably regular
function defined on $\Omega$ can be extended to $\mathbb{R}^d$ via an
extension operator. Composing these two, one obtains a Sobolev
inequality on $\Omega$. See \cite[Chapter 5]{evans2010partial} for an
exposition of this material. To apply the estimates obtained in
\cite{mizuguchi2017estimation}, we lay out the following notation. For
$1 \leq p < q \leq \infty$, let $C_{p,q},D_{p,q} >0$ such that for
every $u \in W^{1,p}(\Omega)$ and $u_\Omega \coloneqq
|\Omega|^{-1}\int_\Omega u$,
\begin{equation}\label{mizuguchi}
	\lVert u \rVert_{L^q(\Omega)} \leq C_{p,q} \lVert u
        \rVert_{W^{1,p}(\Omega)},\:\lVert u - u_\Omega
        \rVert_{L^q(\Omega)} \leq D_{p,q} \lVert \nabla u
        \rVert_{L^p(\Omega)}.
\end{equation}

To estimate $C_{2,p}$ and $C_{p,\infty}$, we cite

\begin{lemma}\cite[Theorem 2.1]{mizuguchi2017estimation}\label{mizuguchi3}
  For $\Omega \subseteq \mathbb{R}^d$, if $D_{p,q}>0$ is given as
  \cref{mizuguchi}, then
	\[
		C_{p,q} = 2^{1-\frac{1}{p}}
                \max(|\Omega|^{\frac{1}{q}-\frac{1}{p}},D_{p,q}).
	\] 
\end{lemma}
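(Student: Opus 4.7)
The plan is to reduce the $W^{1,p}\hookrightarrow L^q$ embedding to the given Poincar\'e--Wirtinger-type inequality by splitting $u$ into its mean and its zero-mean part, estimating each piece separately, and then repackaging the sum in terms of the full $W^{1,p}$-norm. The only work beyond the hypothesis is controlling the constant mode $u_\Omega$ and then chasing sharp constants when the two estimates are combined.

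First I would write $u = (u-u_\Omega) + u_\Omega$ and apply the triangle inequality in $L^q(\Omega)$ to get
\[
\lVert u \rVert_{L^q(\Omega)} \leq \lVert u - u_\Omega \rVert_{L^q(\Omega)} + |u_\Omega|\,|\Omega|^{1/q}.
\]
The first summand is bounded by $D_{p,q}\lVert \nabla u \rVert_{L^p(\Omega)}$ directly by the defining property of $D_{p,q}$ in \cref{mizuguchi}. For the constant mode, H\"older's inequality yields
\[
|u_\Omega| \leq |\Omega|^{-1}\lVert u \rVert_{L^1(\Omega)} \leq |\Omega|^{-1/p}\lVert u \rVert_{L^p(\Omega)},
\]
so $|u_\Omega|\,|\Omega|^{1/q} \leq |\Omega|^{1/q-1/p}\lVert u \rVert_{L^p(\Omega)}$. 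Summing,
\[
\lVert u \rVert_{L^q(\Omega)} \leq D_{p,q}\lVert \nabla u \rVert_{L^p(\Omega)} + |\Omega|^{1/q-1/p}\lVert u \rVert_{L^p(\Omega)}.
\]

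The last step is to convert the right-hand side from an $\ell^1$-sum of two pieces into a multiple of $\lVert u \rVert_{W^{1,p}(\Omega)} = \bigl(\lVert \nabla u \rVert_{L^p}^p + \lVert u \rVert_{L^p}^p\bigr)^{1/p}$; this is where the factor $2^{1-1/p}$ arises. Setting $a_1 = D_{p,q}\lVert \nabla u \rVert_{L^p}$ and $a_2 = |\Omega|^{1/q-1/p}\lVert u \rVert_{L^p}$, I would apply the discrete H\"older inequality $a_1 + a_2 \leq 2^{1-1/p}(a_1^p + a_2^p)^{1/p}$ (obtained by pairing $\ell^1$ against $\ell^{p'}$), then factor the larger of the two coefficients out of the $\ell^p$ sum, giving
\[
\lVert u \rVert_{L^q(\Omega)} \leq 2^{1-1/p}\max\!\bigl(D_{p,q},\,|\Omega|^{1/q-1/p}\bigr)\lVert u \rVert_{W^{1,p}(\Omega)},
\]
which identifies $C_{p,q}$ as claimed.

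The proof is essentially routine; the one place to be careful is the sharpness of the constant at the end. One must use the $\ell^p$--$\ell^{p'}$ discrete H\"older pairing (rather than AM--GM, Young, or the crude bound $a_1+a_2\leq 2\max(a_1,a_2)$) so that the prefactor is exactly $2^{1-1/p}$, and one must pull $\max(D_{p,q},|\Omega|^{1/q-1/p})$ out of the $\ell^p$-combination of $\lVert \nabla u\rVert_{L^p}$ and $\lVert u\rVert_{L^p}$ rather than inserting it pointwise before the sum. Everything else, including the use of H\"older to bound $|u_\Omega|$, is insensitive to whether $|\Omega|$ is large or small and works uniformly for all $1\leq p<q\leq \infty$.
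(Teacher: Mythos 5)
Your proof is correct: the paper itself does not prove this lemma but simply quotes it from the cited reference, and your argument (splitting $u$ into $u_\Omega$ and $u-u_\Omega$, bounding the mean by H\"older, and recombining via the discrete $\ell^1$--$\ell^{p'}$ estimate $a_1+a_2\leq 2^{1-1/p}(a_1^p+a_2^p)^{1/p}$ before factoring out the maximum of the two coefficients) is exactly the standard derivation underlying that cited result, with the right constant for the norm convention $\lVert u\rVert_{W^{1,p}}=(\lVert u\rVert_{L^p}^p+\lVert\nabla u\rVert_{L^p}^p)^{1/p}$. No gaps; the endpoint case $q=\infty$ is covered by your argument with $1/q=0$.
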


The estimation for the Sobolev embedding constant, therefore, reduces
to computing $D_{p,q}$, which is summarized in the following two
lemmas:

\begin{lemma}\cite[Theorem 3.2]{mizuguchi2017estimation}\label{mizuguchi4}
  Let $p \in (2,6]$ and $u \in H^1(\Omega)$ where we further suppose
        that $\Omega$ is convex. Then, we have $\lVert u-u_\Omega
        \rVert_{L^p(\Omega)} \leq D_{2,p} \lVert \nabla u
        \rVert_{L^2(\Omega)}$ with
	\begin{equation}\label{sobconst1}
		D_{2,p}
                =\frac{d_\Omega^{1+\frac{3(p+2)}{2p}}\pi^{\frac{3(p+2)}{4p}}}{3|\Omega|}
                \frac{\Gamma(\frac{3(p-2)}{4p})}{\Gamma(\frac{3(p+2)}{4p})}\sqrt{\frac{\Gamma(\frac{3}{p})}{\Gamma(\frac{3(p-1)}{p})}}\bigg(\frac{4}{\sqrt{\pi}}\bigg)^{\frac{p-2}{2p}}.
	\end{equation}
	Hence
	\[
		C_{2,p} =
                2^{\frac{1}{2}}\max(|\Omega|^{\frac{1}{p}-\frac{1}{2}},D_{2,p}).
	\]
\end{lemma}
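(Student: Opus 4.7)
The plan is to exploit the convexity of $\Omega$ in order to reduce the Poincar\'e-Sobolev estimate to a sharp bound for a Riesz-type convolution, and then to extract the explicit constant $D_{2,p}$ by a careful computation of Gamma integrals. Concretely, for $x,y \in \Omega$ the segment $[x,y]$ lies in $\Omega$ by convexity, so the fundamental theorem of calculus gives
\[
u(x)-u(y) = -\int_0^1 \nabla u\bigl((1-t)x + ty\bigr)\cdot (x-y)\, dt.
\]
Averaging over $y \in \Omega$ and using $u_\Omega = |\Omega|^{-1}\int_\Omega u$, an exchange of order of integration followed by the substitution $z=(1-t)x+ty$ (for each fixed $x$) yields a pointwise estimate of the form
\[
|u(x)-u_\Omega| \;\leq\; \frac{d_\Omega^{\,d}}{d\,|\Omega|} \int_\Omega \frac{|\nabla u(z)|}{|x-z|^{d-1}}\, dz,
\]
which is the classical Gilbarg--Trudinger mean-value estimate on convex domains.

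Second, I would apply Young's convolution inequality to the right-hand side, writing it as the convolution of $|\nabla u|\chi_\Omega \in L^2$ with the kernel $K(x)=|x|^{-(d-1)}\chi_{B(0,d_\Omega)}(x)$. The exponent relation $1+\tfrac{1}{p} = \tfrac{1}{r}+\tfrac{1}{2}$ forces $r = 2p/(p+2)$, and a direct polar-coordinate computation then gives $\|K\|_{L^r}$ as an explicit power of $d_\Omega$ times a constant involving $d$ and $r$. Plugging this back in and simplifying with the beta-function identity $B(a,b) = \Gamma(a)\Gamma(b)/\Gamma(a+b)$ should reproduce exactly the Gamma-function expression in \eqref{sobconst1}; the factors $d_\Omega^{1+3(p+2)/(2p)}/|\Omega|$ track the diameter scaling of $K$ and the averaging factor $|\Omega|^{-1}$ from the first step, while $\pi^{3(p+2)/(4p)}$ encodes the area of $S^{d-1}$ for $d=3$.

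The main obstacle is twofold. First, bookkeeping the sharp constant through the change of variables and through Young's inequality requires care, in particular because the Jacobian of $z=(1-t)x+ty$ is $t^d$ and one must integrate $t$ out before estimating, rather than after; this is where the $\tfrac{1}{d}$ and the extra $d_\Omega^d$ factor enter. Second, Young's inequality is not sharp at the endpoint $p=6$ (where $d=3$ and $r=d/(d-1)$ makes $\|K\|_{L^r}$ blow up logarithmically), so a separate argument is needed at that endpoint: one can replace Young by the Hardy--Littlewood--Sobolev inequality with its sharp constant, or approximate $p=6$ by $p<6$ and pass to the limit using the explicit monotonicity of the Gamma expression in $p$. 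Combining the interior estimate with $p<6$ and the endpoint argument for $p=6$ gives the stated formula on all of $(2,6]$, and the formula for $C_{2,p}$ then follows immediately from \cref{mizuguchi3}.
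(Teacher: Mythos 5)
The paper offers no proof of this lemma—it is quoted verbatim from Mizuguchi et al.—so the relevant comparison is with the method of the cited source. Your first step is exactly right and is also theirs: convexity gives the Gilbarg--Trudinger mean-value estimate $|u(x)-u_\Omega| \le \frac{d_\Omega^3}{3|\Omega|}\int_\Omega |\nabla u(z)|\,|x-z|^{-2}\,dz$, which accounts for the prefactor $\frac{d_\Omega^3}{3|\Omega|}$. The gap is in your second step. Plain Young's inequality with the truncated kernel $K=|x|^{-2}\chi_{B(0,d_\Omega)}$ and $r=2p/(p+2)$ does \emph{not} reproduce \cref{sobconst1}: a direct polar computation gives
\begin{equation*}
\lVert K\rVert_{L^r} \;=\; \Big(\tfrac{4\pi(p+2)}{6-p}\Big)^{\frac{p+2}{2p}} d_\Omega^{\frac{6-p}{2p}},
\end{equation*}
so the resulting bound is $\frac{d_\Omega^{\,1+\frac{3(p+2)}{2p}}}{3|\Omega|}\big(\tfrac{4\pi(p+2)}{6-p}\big)^{\frac{p+2}{2p}}$\,---\,the correct $d_\Omega$-scaling, but with no Gamma functions and with a constant that diverges as $p\to 6$, whereas the stated $D_{2,p}$ is finite at $p=6$. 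Consequently neither the claimed "beta-function simplification" nor your proposed limiting argument from $p<6$ can close the endpoint, and for $p$ near $6$ your constant is strictly larger than the one you are asked to prove.

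The missing idea is that the Gamma-function expression is Lieb's \emph{sharp} Hardy--Littlewood--Sobolev constant, and it must be used on the whole range, not only at the endpoint. Since $|x-z|\le d_\Omega$ inside $\Omega$, one first raises the kernel power, $|x-z|^{-2}\le d_\Omega^{\lambda-2}|x-z|^{-\lambda}$ with $\lambda=3\big(\tfrac12+\tfrac1p\big)=\tfrac{3(p+2)}{2p}$, which is precisely the homogeneity required for HLS with the exponent pair $(p',2)$; this produces the total power $d_\Omega^{\,3+\lambda-2}=d_\Omega^{\,1+\frac{3(p+2)}{2p}}$. Applying HLS with Lieb's explicit constant (known because one exponent equals $2$) then yields exactly the factors $\pi^{\lambda/2}\,\Gamma\!\big(\tfrac{3(p-2)}{4p}\big)/\Gamma\!\big(\tfrac{3(p+2)}{4p}\big)$, $\sqrt{\Gamma(3/p)/\Gamma(3(p-1)/p)}$ and $(4/\sqrt{\pi})^{\frac{p-2}{2p}}$ in \cref{sobconst1}, uniformly for $p\in(2,6]$ (at $p=6$ one has $\lambda=2$ and no kernel adjustment is needed, which is the one point where your HLS fallback coincides with the correct argument). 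The passage from $D_{2,p}$ to $C_{2,p}$ via \cref{mizuguchi3} is fine as you state it.
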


\begin{lemma}\cite[Theorem 3.4]{mizuguchi2017estimation}
	\label{mizuguchi5}
	For $p >3$ and $u \in W^{1,p}(\Omega)$, we have $\lVert
        u-u_\Omega \rVert_{L^\infty} \leq D_{p,\infty} \lVert \nabla u
        \rVert_{L^p}$ with
	\begin{equation}\label{sobconst3}
		D_{p,\infty} = \frac{d_\Omega^3}{3|\Omega|}
                \left|\left| |x|^{-2}\right|\right|_{L^{p^\prime}(V)},
	\end{equation}
	where $\Omega_x \coloneqq \left\{ x-y: y \in \Omega\right\}$ and $V = \bigcup\limits_{x \in \Omega} \Omega_x$.\footnote{$p^\prime \coloneqq \frac{p}{p-1}$ denotes the H\"older conjugate of $p$.} Hence
	\[
		C_{p,\infty} =
                2^{1-\frac{1}{p}}\max(|\Omega|^{-\frac{1}{p}},D_{p,\infty}).
	\]
\end{lemma}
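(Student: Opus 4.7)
The plan is to reduce the oscillation estimate $\|u-u_\Omega\|_{L^\infty}\leq D_{p,\infty}\|\nabla u\|_{L^p}$ to an integral representation of $u(x)-u_\Omega$ against a Riesz-type kernel $|x-y|^{-(d-1)}$, which then combines with Hölder to produce exactly the stated constant. First I would reduce to $u\in C^1(\overline{\Omega})$ by density (the functional inequality being closed under $W^{1,p}$ limits), and then write, for each $x\in\Omega$,
\[
    u(x)-u_\Omega = \frac{1}{|\Omega|}\int_\Omega(u(x)-u(y))\,dy
    = -\frac{1}{|\Omega|}\int_\Omega\int_0^{|x-y|}\nabla u(y+s\omega)\cdot\omega\,ds\,dy,
\]
where $\omega=(x-y)/|x-y|$. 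Here convexity of $\Omega$ from \cref{h1} is essential so that the entire segment lies inside $\Omega$.

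The next step is the standard Gilbarg--Trudinger manipulation: extend $\nabla u$ by zero outside $\Omega$, switch to polar coordinates $y=x-r\omega$ with $r\in[0,d_\Omega]$, pull the $r$-integral inside, and perform the trivial estimate
\[
    \int_0^{d_\Omega} r^{d-1}\int_0^{r}|\nabla u(x-s\omega)|\,ds\,dr
    \leq \frac{d_\Omega^{d}}{d}\int_0^{d_\Omega}|\nabla u(x-s\omega)|\,ds.
\]
Undoing the polar substitution on the right then yields, for $d=3$,
\[
    |u(x)-u_\Omega|\leq \frac{d_\Omega^{3}}{3|\Omega|}\int_\Omega \frac{|\nabla u(y)|}{|x-y|^{2}}\,dy.
\]

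Now I would apply Hölder's inequality with conjugate exponents $p$ and $p'$:
\[
    |u(x)-u_\Omega|\leq \frac{d_\Omega^{3}}{3|\Omega|}\|\nabla u\|_{L^p(\Omega)}\left(\int_\Omega |x-y|^{-2p'}\,dy\right)^{1/p'}.
\]
The substitution $z=x-y$ turns the inner integral into one over $\Omega_x$, and the elementary inclusion $\Omega_x\subseteq V$ gives a uniform bound by $\||z|^{-2}\|_{L^{p'}(V)}$. Taking the supremum in $x$ yields the claimed form of $D_{p,\infty}$. The hypothesis $p>3$ is exactly what makes $2p'<3=d$, ensuring the singularity $|z|^{-2}$ is locally $L^{p'}$-integrable; this is the one place where the restriction on $p$ is used, and I expect the only subtle point is verifying that this bound is independent of $x$ after passing from $\Omega_x$ to $V$.

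Finally, the stated $C_{p,\infty}$ bound follows by invoking \cref{mizuguchi3}: decompose $u=(u-u_\Omega)+u_\Omega$, bound the mean by $|u_\Omega|\leq |\Omega|^{-1/p}\|u\|_{L^p}$ via Hölder, and combine the two pieces using the elementary inequality $a+b\leq 2^{1-1/p}(a^{p}+b^{p})^{1/p}$ to match the $W^{1,p}$ norm. No serious obstacle arises beyond confirming the constant tracking in the polar computation; the convexity of $\Omega$ from \cref{h1} and the subcritical exponent constraint $p>d$ are the only structural inputs.
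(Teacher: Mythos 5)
Your argument is correct, and it is essentially the standard proof of this estimate: the paper itself does not prove the lemma but cites it from the reference, whose derivation is the same Gilbarg--Trudinger-type potential estimate $|u(x)-u_\Omega|\leq \frac{d_\Omega^3}{3|\Omega|}\int_\Omega |\nabla u(y)|\,|x-y|^{-2}\,dy$ over a convex domain, followed by H\"older with exponents $p,p'$, the substitution $z=x-y$, and the inclusion $\Omega_x\subseteq V$, with $p>3$ guaranteeing $2p'<3$ so that $|z|^{-2}\in L^{p'}(V)$. (The minus sign in your first display is spurious for $\omega=(x-y)/|x-y|$, but this is immaterial since only absolute values are used, and your final reduction to $C_{p,\infty}$ via the mean-value decomposition is exactly the content of the cited Theorem 2.1.)
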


Our proof for the existence of solution depends on the size of the
Sobolev inequality constant.

\begin{lemma}\label{sobolevestimate}
	For every $p \in (3,6)$ and $\Omega \subseteq \mathbb{R}^3$
        bounded and convex, we have
	\[
		|\Omega|^{-\frac{1}{2}}\leq C_S(2) \leq
                2^{\frac{1}{p}} C_{2,p}C_{p,\infty},
	\]
	where for $1 \leq p < q \leq \infty$, denote $C_{p,q}>0$ by a
        constant such that for every $u \in W^{1,p}(\Omega)$
	\[
		\lVert u \rVert_{L^q(\Omega)} \leq C_{p,q} \lVert u
                \rVert_{W^{1,p}(\Omega)}.
	\]
	If we further assume that $|\Omega| = Cd_\Omega^3$ for some
        $C>0$, then there exists $d_0>0$ such that for every $d_\Omega
        \leq d_0$,
	\[
		|\Omega|^{-\frac{1}{2}} \leq C_S(2) \leq
                2^{\frac{3}{2}}|\Omega|^{-\frac{1}{2}}.
	\]
\end{lemma}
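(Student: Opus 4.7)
My plan is to split the argument into three pieces corresponding to the three inequalities in the statement. For the lower bound $|\Omega|^{-1/2} \leq C_S(2)$, I will simply test the definition of $C_S(2)$ on the constant function $u \equiv 1$, for which $\lVert u \rVert_{L^\infty} = 1$ while $\lVert u \rVert_{H^2} = |\Omega|^{1/2}$; the definition as an infimum of admissible constants then forces $C_S(2) \geq |\Omega|^{-1/2}$.

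For the upper bound $C_S(2) \leq 2^{1/p} C_{2,p} C_{p,\infty}$, I will chain the two embeddings. Given $u \in H^2(\Omega)$, both $u$ and each component of $\nabla u$ lie in $H^1(\Omega)$, so the $C_{2,p}$ embedding yields $\lVert u \rVert_{L^p} \leq C_{2,p} \lVert u \rVert_{H^1} \leq C_{2,p} \lVert u \rVert_{H^2}$ and $\lVert \nabla u \rVert_{L^p} \leq C_{2,p} \lVert u \rVert_{H^2}$. Adding the $p$-th powers inside the $W^{1,p}$ norm gives
\[
\lVert u \rVert_{W^{1,p}} = \bigl(\lVert u \rVert_{L^p}^p + \lVert \nabla u \rVert_{L^p}^p\bigr)^{1/p} \leq 2^{1/p} C_{2,p} \lVert u \rVert_{H^2}.
\]
Feeding this into $\lVert u \rVert_{L^\infty} \leq C_{p,\infty} \lVert u \rVert_{W^{1,p}}$ yields the claim.

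For the asymptotic statement under $|\Omega| = C d_\Omega^3$, the idea is to show that for $d_\Omega$ small the two maxima in \cref{mizuguchi3,mizuguchi4} (applied via \cref{mizuguchi4,mizuguchi5}) are achieved by the $|\Omega|$-power terms, not by the $D_{p,q}$ terms. From \cref{sobconst1} one has $D_{2,p} \lesssim d_\Omega^{5/2 + 3/p}/|\Omega|$, which under $|\Omega| \sim d_\Omega^3$ behaves like $d_\Omega^{3/p - 1/2}$, whereas $|\Omega|^{1/p - 1/2} \sim d_\Omega^{3/p - 3/2}$; thus the ratio $D_{2,p}/|\Omega|^{1/p-1/2}$ is $O(d_\Omega)$ and vanishes. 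Similarly, bounding $\lVert |x|^{-2} \rVert_{L^{p'}(V)}$ on the ball $B(0,d_\Omega) \supseteq V$ and using $p > 3$ (so that $2p' < 3$ and the integral converges) gives $D_{p,\infty} \lesssim d_\Omega^{1 + 3/p'}/|\Omega| \sim d_\Omega^{1 - 3/p}$, which is likewise $O(d_\Omega)$ times $|\Omega|^{-1/p}$. Hence there exists $d_0>0$ beyond which $C_{2,p} = 2^{1/2}|\Omega|^{1/p - 1/2}$ and $C_{p,\infty} = 2^{1-1/p}|\Omega|^{-1/p}$.

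Multiplying these two asymptotic identities and combining with the prefactor $2^{1/p}$ from the general upper bound yields $2^{1/p} \cdot 2^{1/2} \cdot 2^{1 - 1/p} \cdot |\Omega|^{1/p - 1/2}|\Omega|^{-1/p} = 2^{3/2}|\Omega|^{-1/2}$, exactly matching the desired estimate, while the lower bound $|\Omega|^{-1/2} \leq C_S(2)$ is already established. The main bookkeeping obstacle is the third paragraph: correctly tracking the exponents of $d_\Omega$ in \cref{sobconst1,sobconst3}, ensuring the convergence of the integral defining $D_{p,\infty}$, and verifying that the cancellation produces precisely $|\Omega|^{-1/2}$ without a $p$-dependent residue. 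Once these exponents are checked the result follows by elementary manipulations.
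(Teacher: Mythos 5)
Your proposal is correct and follows essentially the same route as the paper: constant functions for the lower bound, the chain $H^2(\Omega)\hookrightarrow W^{1,p}(\Omega)\hookrightarrow L^\infty(\Omega)$ with the $2^{1/p}$ factor for the upper bound, and the same exponent bookkeeping showing $D_{2,p}\sim d_\Omega^{3/p-1/2}$ and $D_{p,\infty}\sim d_\Omega^{1-3/p}$ are dominated by the $|\Omega|$-power terms for small $d_\Omega$, yielding $2^{3/2}|\Omega|^{-1/2}$. The only step you compress is the bound $\lVert \nabla u\rVert_{L^p}\leq C_{2,p}\lVert u\rVert_{H^2}$, which (as in the paper) needs the componentwise estimate together with $\sum_i a_i^p \leq \bigl(\sum_i a_i^2\bigr)^{p/2}$ for $p\geq 2$; this is routine and does not affect correctness.
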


\begin{proof}
	Consider the embedding $H^2(\Omega)\hookrightarrow
        W^{1,p}(\Omega)\hookrightarrow L^\infty(\Omega)$, which is
        continuous when $p\in (3,6)$. From the first embedding, we
        obtain
	\[
	\begin{split}
		\lVert u \rVert_{W^{1,p}}^p &= \lVert u \rVert_{L^p}^p + \sum_{i=1}^d \lVert \partial_i u \rVert^p\\
		&\leq C_{2,p}^p \lVert u \rVert_{H^1}^p + C_{2,p}^p \sum_{i=1}^d \lVert \partial_i u \rVert_{H^1}^p \leq C_{2,p}^p \lVert u \rVert_{H^1}^p + C_{2,p}^p \Big(\sum_{i=1}^d \lVert \partial_i u \rVert_{H^1}^2\Big)^{p/2}\\
		&\leq C_{2,p}^p \lVert u \rVert_{H^1}^p + C_{2,p}^p \lVert u \rVert_{H^2}^p \leq 2 C_{2,p}^p \lVert u \rVert_{H^2}^p,
		\end{split}
	\]
	and from the second embedding,
	\[
		\lVert u \rVert_{L^\infty} \leq C_{p,\infty} \lVert u
                \rVert_{W^{1,p}}.
	\]
	Combining the two, we obtain the upper bound. For the lower
        bound, consider a family of constant functions defined on
        $\Omega$. Then
	\[
		\frac{\lVert c \rVert_{L^\infty}}{\lVert c
                  \rVert_{H^2}} = \frac{\lVert c
                  \rVert_{L^\infty}}{\lVert c \rVert_{L^2}} =
                |\Omega|^{-\frac{1}{2}} \leq C_S(2).
	\]
	Now we assume $|\Omega| = C d_\Omega^3$ for some $C>0$ and
        give a sharp bound for $C_S(2)$ for $d_\Omega$ sufficiently
        small. Note that this hypothesis includes domains such as a
        ball $B(0,R)\subseteq \mathbb{R}^3$ or a cube $[-R,R]^3$ for
        $R>0$.
	
	Since $D_{2,p} = C(p)d_\Omega^{\frac{3}{p}-\frac{1}{2}}$ by
        \cref{sobconst1} and $|\Omega|^{\frac{1}{p}-\frac{1}{2}} = (C
        d_\Omega^3)^{\frac{1}{p}-\frac{1}{2}}$, we have
	\begin{equation}\label{sobconst5}
		C_{2,p} = 2^{\frac{1}{2}} |\Omega|^{\frac{1}{p}-\frac{1}{2}},
	\end{equation}
	for all $d_\Omega \leq d_0(p)$ for some $d_0(p)>0$. On the
        other hand, we may translate the domain and assume
        $\frac{d_\Omega}{2} = \sup\limits_{x\in \Omega} |x|$. Then, $V
        \subseteq B(0,d_\Omega)$ and
	\[
		\left|\left| |x|^{-2}
                \right|\right|_{L^{p^\prime}(V)}^{p^\prime} \leq
                \left|\left| |x|^{-2}
                \right|\right|_{L^{p^\prime}(B(0,d_\Omega))}^{p^\prime}
                = 4\pi \int_0^{d_\Omega} r^{-2p^\prime+2} dr =
                \frac{4\pi}{-2p^\prime + 3} d_\Omega^{-2p^\prime + 3},
	\]
	and thus $D_{p,\infty} = C^\prime(p)d_\Omega^{-\frac{3}{p}+1}$
        by \cref{sobconst3}, and we have
	\begin{equation}\label{sobconst6}
		C_{p,\infty} =
                2^{1-\frac{1}{p}}|\Omega|^{-\frac{1}{p}},
	\end{equation}
	for all $d_\Omega \leq d_0^\prime(p)$ for some
        $d_0^\prime(p)>0$. Combining \cref{sobconst5} and
        \cref{sobconst6}, we have
	\[
		|\Omega|^{-\frac{1}{2}} \leq C_S(2) \leq
                2^{\frac{3}{2}}|\Omega|^{-\frac{1}{2}},
	\]
	for all $d_\Omega$ sufficiently small.
\end{proof}

\subsection{Estimates for $C_H$.}\label{Ch}

To do a numerical simulation, it is of interest to obtain an estimate
for the elliptic regularity constant $C_H>0$. In applications, the
tensor $\epsilon$ is usually assumed to be a scalar-valued function,
in which case, an estimate for $C_H$ can be obtained by the Fourier
transform:
\[ \hat{f}(\xi) = \int_{\mathbb{R}^d} f(x)e^{-ix\cdot \xi}
d\xi \quad \text{and} \quad f(x) = (2\pi)^{-d}\int_{\mathbb{R}^d}
\hat{f}(\xi)e^{i x \cdot \xi} d\xi.
\]
For any $s\in \mathbb R$, define
\[
	H^s(\mathbb{R}^d) = \{f \in \mathscr{S}^\prime: \langle \xi
        \rangle^s \hat{f} \in L^2(\mathbb{R}^d)\},
\]
where $\langle \xi \rangle \coloneqq (1+|\xi|^2)^{\frac{1}{2}}$ and
$\mathscr{S}^\prime$ is the space of tempered distributions.

\begin{lemma}\label{C_H0}
	Let \(L\) be as in \cref{set-up}. Furthermore, suppose
        $\epsilon^{ij} = \epsilon(x)\delta_{ij}$ where $\delta_{ij}$
        is the Kronecker delta function and $\epsilon \in
        W^{1,\infty}(\Omega)$ such that $\Real(\epsilon(x)) \geq
        \theta>0$ for all $x \in \Omega$. Then
\begin{equation}\label{ch.estimate}
C_H \leq \frac{\lambda_1^{-1} \langle \lambda_1^{\frac{1}{3}}
  \rangle^3 }{\theta}\left(1+ \frac{\lVert \kappa^2
  \rVert_{L^\infty(\Omega)}+d^{\frac{1}{2}}\max\limits_{1 \leq i \leq
    d} \lVert \partial_i \epsilon
  \rVert_{L^\infty(\Omega)}\lambda_1^{\frac{1}{2}}}{\theta \lambda_1 -
  \mu}\right).
\end{equation}
\end{lemma}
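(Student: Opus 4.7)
The plan is to bound $\|u\|_{H^2(\Omega)}$ in two stages: first control $\|u\|_{H^2}$ by $\|\Delta u\|_{L^2}$ using the spectral theory of the Dirichlet Laplacian on the convex domain $\Omega$ (this is what produces the geometric factor $\lambda_1^{-1}\langle\lambda_1^{1/3}\rangle^3$), then bound $\|\Delta u\|_{L^2}$ by $\|Lu\|_{L^2}$ by pairing the equation with $-\Delta u$ and invoking the coercivity already used in \cref{laxmilgram} (this gives the $\theta^{-1}(1+\ldots/(\theta\lambda_1-\mu))$ factor). The Fourier set-up from the preamble is effectively replaced here by the spectral decomposition of $-\Delta$, with $|\xi|^2$ playing the role of the Dirichlet eigenvalues $\lambda_k\geq\lambda_1$.

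For the first stage, I would expand $u \in H^2\cap H^1_0(\Omega)$ as $u = \sum c_k\phi_k$ in the orthonormal basis of Dirichlet Laplacian eigenfunctions; Parseval gives $\|u\|_{L^2}^2 = \sum|c_k|^2$, $\|\nabla u\|_{L^2}^2 = \sum\lambda_k|c_k|^2$, and $\|\Delta u\|_{L^2}^2 = \sum\lambda_k^2|c_k|^2$. Smoothness and convexity of $\Omega$ (\cref{h1}) give $\|D^2 u\|_{L^2}\leq\|\Delta u\|_{L^2}$ by a classical Grisvard-type inequality, so $\|u\|_{H^2}^2 \leq \sum(1+\lambda_k+\lambda_k^2)|c_k|^2$. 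The elementary bound $1+\lambda+\lambda^2\leq (1+\lambda^{2/3})^3 = \langle\lambda^{1/3}\rangle^6$ (valid for $\lambda\geq 0$ since $3\lambda^{2/3}+3\lambda^{4/3}\geq 6\lambda\geq\lambda$ by AM--GM) combined with the monotonicity of $\lambda\mapsto\langle\lambda^{1/3}\rangle^6/\lambda^2$ then yields $\|u\|_{H^2}\leq \lambda_1^{-1}\langle\lambda_1^{1/3}\rangle^3 \|\Delta u\|_{L^2}$.

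For the second stage, the scalar hypothesis $\epsilon^{ij}=\epsilon\delta_{ij}$ lets me expand $Lu = -\epsilon\Delta u-\nabla\epsilon\cdot\nabla u+\kappa^2 u$. Pairing with $-\Delta u$ in $L^2$, taking real parts, and using $\Real(\epsilon)\geq\theta$ together with Cauchy--Schwarz and the componentwise bound $|\nabla\epsilon|\leq d^{1/2}\max_i\|\partial_i\epsilon\|_{L^\infty}$ gives, after canceling one factor of $\|\Delta u\|$,
\begin{equation*}
\theta\|\Delta u\|_{L^2}\leq \|Lu\|_{L^2} + d^{1/2}\max_i\|\partial_i\epsilon\|_{L^\infty}\|\nabla u\|_{L^2} + \|\kappa^2\|_{L^\infty}\|u\|_{L^2}.
\end{equation*}
The remaining $H^1$ norms come from the same coercivity used in \cref{laxmilgram}: testing $Lu$ against $u$, invoking $\Real(\kappa^2)\geq-\mu$ and Poincar\'e (\cref{poincare2}), gives $\lambda_1^{-1}(\theta\lambda_1-\mu)\|\nabla u\|_{L^2}^2 \leq \|Lu\|_{L^2}\lambda_1^{-1/2}\|\nabla u\|_{L^2}$, hence $\|\nabla u\|_{L^2}\leq \lambda_1^{1/2}(\theta\lambda_1-\mu)^{-1}\|Lu\|_{L^2}$ and $\|u\|_{L^2}\leq (\theta\lambda_1-\mu)^{-1}\|Lu\|_{L^2}$.

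Combining the two stages and collecting constants gives \cref{ch.estimate} exactly. The main obstacle is the first-stage convex-domain Hessian bound $\|D^2 u\|_{L^2}\leq \|\Delta u\|_{L^2}$, which fails on general non-convex Lipschitz domains and is precisely where \cref{h1} is essential; this also explains why the scalarity of $\epsilon$ matters in the second stage, since the identity $\nabla\cdot(\epsilon\nabla u) = \epsilon\Delta u + \nabla\epsilon\cdot\nabla u$ is what isolates a single leading term $-\epsilon\Delta u$ that pairs cleanly against $-\Delta u$ to extract the coercive quantity $\theta\|\Delta u\|_{L^2}^2$.
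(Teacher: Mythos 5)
Your proposal is correct and arrives at \cref{ch.estimate} with exactly the right constants; its overall skeleton is the same as the paper's (first reduce to a bound of $\lVert u\rVert_{H^2}$ by $\lambda_1^{-1}\langle\lambda_1^{1/3}\rangle^3\lVert\Delta u\rVert_{L^2}$ for $u\in H^2\cap H^1_0$, then isolate $-\Delta u$ using the scalar form of $\epsilon$, then control $\lVert u\rVert_{L^2}$ and $\lVert\nabla u\rVert_{L^2}$ by the same coercivity/Poincar\'e computation). The genuine difference is how the first-stage constant is produced: you expand in the Dirichlet eigenbasis, use Parseval to get $\lVert u\rVert_{L^2}^2+\lVert\nabla u\rVert_{L^2}^2=\sum(1+\lambda_k)|c_k|^2$ and $\lVert\Delta u\rVert_{L^2}^2=\sum\lambda_k^2|c_k|^2$, invoke the convex-domain Hessian inequality $\lVert D^2u\rVert_{L^2}\le\lVert\Delta u\rVert_{L^2}$ (Grisvard), and then use the elementary bound $1+\lambda+\lambda^2\le\langle\lambda^{1/3}\rangle^6$ together with the monotonicity of $\lambda\mapsto\langle\lambda^{1/3}\rangle^6/\lambda^2$; the paper instead proves the same estimate \cref{C_H2} by extending $u$ by zero to $\mathbb{R}^d$ and splitting the Fourier integral at $|\xi|=\lambda_1^{1/3}$, handling the low frequencies by Poincar\'e and the weak form. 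Your route stays entirely on $\Omega$, is a clean a priori estimate on $H^2\cap H^1_0$ (which is precisely what the definition of $C_H$ asks for), and avoids the fragile extension step in the paper (the claim that the solution with $F\in C^\infty_c$ lies in $C^2_c(\Omega)$ and can be extended by zero in $H^2(\mathbb{R}^d)$), but it makes essential use of convexity through Grisvard's inequality and of eigenfunction completeness, whereas the paper's Fourier computation does not need convexity at that point. The second stage is only a cosmetic variant: you pair the equation against $-\Delta u$ and use $\Real\epsilon\ge\theta$, while the paper divides by $\epsilon$ and uses $|\epsilon|\ge\Real\epsilon\ge\theta$; both give $\theta\lVert\Delta u\rVert_{L^2}\le\lVert Lu\rVert_{L^2}+\lVert\kappa^2\rVert_{L^\infty}\lVert u\rVert_{L^2}+d^{1/2}\max_i\lVert\partial_i\epsilon\rVert_{L^\infty}\lVert\nabla u\rVert_{L^2}$, and the remaining bounds coincide with \cref{C_H4,C_H5}.
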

\begin{proof}
	Given $F \in L^2(\Omega)$ and a unique weak solution $u \in
        H^1_0(\Omega)$ of the Laplace equation $-\Delta u = F$ in
        $\Omega$, we find \(C_1>0\) such that $\lVert u
        \rVert_{H^2(\Omega)} \leq C_1 \lVert F
        \rVert_{L^2(\Omega)}$. We use this energy estimate to handle
        the more complicated case.
	
	By the density argument, it suffices to assume $F \in
        C^\infty_c(\Omega)$. By an integration-by-parts argument, it
        can be shown that $u \in C^2_c(\Omega)$. Hence, we extend $u$
        to a function in $C^2_c(\mathbb{R}^d)$, which we continue to
        call $u$, by defining $u(x) = 0$ for $x \in \mathbb{R}^d
        \setminus \mathrm{supp}(u).$ Then,
	\begin{equation*}
	\begin{aligned}
		\lVert u \rVert_{H^2(\Omega)}^2 &\leq \lVert u
                \rVert_{H^2(\mathbb{R}^d)}^2= \int_{\mathbb{R}^d}
                \langle \xi \rangle^4 |\hat{u}(\xi)|^2 d\xi \\&=
                \int_{|\xi| \geq c} \langle \xi \rangle^4
                |\hat{u}(\xi)|^2 d\xi + \int_{|\xi|<c} \langle \xi
                \rangle^4 |\hat{u}(\xi)|^2 d\xi =: I + II
	\end{aligned}
	\end{equation*}
	for some \(c>0\) to be fixed later. For the high frequencies,
    \[
	\begin{split}
		I &= \int_{|\xi| \geq c} \langle \xi \rangle^4
                |\hat{u}(\xi)|^2 d\xi = \int_{|\xi|\geq c}
                \frac{\langle \xi \rangle^4}{|\xi|^4} |\widehat{\Delta
                  u}|^2 d\xi= \int_{|\xi|\geq c} \frac{\langle \xi
                  \rangle^4}{|\xi|^4} |\hat{F}(\xi)|^2 d\xi \leq
                \frac{\langle c \rangle^4}{|c|^4} \lVert F
                \rVert_{L^2(\Omega)}^2.
	\end{split}
	\]
	Combining the Poincar\'e inequality and the weak form of the
        Laplace equation, we have
	\[
		\lVert u \rVert_{L^2(\Omega)}^2 \leq \lambda_1^{-1}
                \lVert \nabla u \rVert_{L^2(\Omega)}^2 \leq
                \lambda_1^{-1}\lVert F \rVert_{L^2(\Omega)} \lVert u
                \rVert_{L^2(\Omega)},
	\]
	and therefore, for the low frequencies,
	\[
		II \leq \langle c \rangle^4 \lVert u
                \rVert_{L^2(\Omega)}^2 \leq \langle c \rangle^4
                \lambda_1^{-2} \lVert F \rVert_{L^2(\Omega)}^2.
	\]
	Combining $I$ and $II$,
	\[
		\lVert u \rVert_{H^2(\Omega)} \leq \langle c \rangle^2
                (|c|^{-4}+ \lambda_1^{-2})^{\frac{1}{2}} \lVert F
                \rVert_{L^2(\Omega)}.
	\]
	Noting that $c \mapsto \langle c \rangle^2 (|c|^{-4}+
        \lambda_1^{-2})^{\frac{1}{2}}$ has a global minimum at $c =
        \lambda_1^{\frac{1}{3}}$, we fix that value of $c$ to obtain
	\begin{equation}\label{C_H2}
		\lVert u \rVert_{H^2(\Omega)} \leq C_1 \lVert F
                \rVert_{L^2(\Omega)} \quad \text{where} \quad C_1
                \coloneqq \lambda_1^{-1} \langle
                \lambda_1^{\frac{1}{3}} \rangle^3 .
	\end{equation}
	Now we assume $u \in H^1_0(\Omega)$ is the unique weak
        solution of
	\begin{equation}\label{C_H}
		-\nabla \cdot (\epsilon\nabla u) + \kappa^2 u = f
                \quad \text{in } \Omega.
	\end{equation}
	Setting $F \coloneqq f- \kappa^2 u \in L^2(\Omega)$, the
        product rule applied to \cref{C_H} yields
	\[
		-\Delta u = \epsilon(x)^{-1}(F + \nabla \epsilon \cdot
                \nabla u).
	\]
	Noting that $|\epsilon(x)| \geq |\Real (\epsilon(x))| \geq
        \Real (\epsilon(x)) \geq \theta$, an immediate application of
        \cref{C_H2} yields
	\begin{equation}\label{C_H3}
		\lVert u \rVert_{H^2(\Omega)} \leq \frac{C_1}{\theta}
                (\lVert F \rVert_{L^2(\Omega)} + \lVert \nabla
                \epsilon \cdot \nabla u\rVert_{L^2(\Omega)}).
	\end{equation}
	Taking the real part of the weak form of \cref{C_H}, we have
	\[
		\int_{\Omega} \Real(\epsilon(x)) |\nabla u|^2 +
                \int_{\Omega} \Real(\kappa^2) |u|^2 = \Real
                \int_{\Omega} f\overline{u}.
	\]
	Recalling that $\Real(\kappa^2(x)) \geq - \mu$ for all $x \in
        \Omega$ and the uniform ellipticity,
	\begin{equation}\label{poincare3}
		\theta \int_{\Omega} |\nabla u|^2 + \int_{\Omega}
                \Real(\kappa^2) |u|^2 \leq \lVert f
                \rVert_{L^2(\Omega)} \lVert u \rVert_{L^2(\Omega)}.
	\end{equation}
	The Poincar\'e inequality yields $ (\theta \lambda_1 -
        \mu)\lVert u \rVert_{L^2(\Omega)}^2 \leq \theta \int_{\Omega}
        |\nabla u|^2 + \int_{\Omega} \Real(\kappa^2) |u|^2$, which
        gives
	\[
		\lVert u \rVert_{L^2(\Omega)} \leq \frac{\lVert f
                  \rVert_{L^2(\Omega)}}{\theta \lambda_1 -\mu}.
	\]
	Another application of the Poincar\'e inequality to
        \cref{poincare3} yields $(\theta - \mu \lambda_1^{-1})
        \int_\Omega |\nabla u|^2 \leq \theta \int_{\Omega} |\nabla
        u|^2 + \int_{\Omega} \Real(\kappa^2) |u|^2$, which gives
	\begin{equation}\label{l22}
		\lVert \nabla u \rVert_{L^2(\Omega)} \leq
                \lambda_1^{\frac{1}{2}}\frac{\lVert f
                  \rVert_{L^2(\Omega)}}{\theta \lambda_1 - \mu}.
	\end{equation}
	Hence,
	\begin{equation}\label{C_H4}
		\lVert F \rVert_{L^2(\Omega)} \leq \Big(1+\frac{\lVert
                  \kappa^2 \rVert_{L^\infty(\Omega)}}{\theta \lambda_1
                  - \mu}\Big) \lVert f \rVert_{L^2(\Omega)}.
	\end{equation}
	On the other hand, the Cauchy-Schwarz inequality yields
	\begin{equation}\label{C_H5}
		\lVert \nabla \epsilon \cdot \nabla
                u\rVert_{L^2(\Omega)} \leq \sqrt{d} \max_{1 \leq i
                  \leq d} \lVert \partial_i
                \epsilon\rVert_{L^\infty(\Omega)} \lVert \nabla u
                \rVert_{L^2(\Omega)} \leq \sqrt{d} \max_{1 \leq i \leq
                  d} \lVert \partial_i
                \epsilon\rVert_{L^\infty(\Omega)}\lambda_1^{\frac{1}{2}}\frac{\lVert
                  f \rVert_{L^2(\Omega)}}{\theta \lambda_1 - \mu}.
	\end{equation}
	By \cref{C_H2,C_H3,C_H4,C_H5},
	\[
		\lVert u \rVert_{H^2(\Omega)} \leq
                \frac{\lambda_1^{-1} \langle \lambda_1^{\frac{1}{3}}
                  \rangle^2 (1+
                  \lambda_1^{\frac{2}{3}})^{\frac{1}{2}}}{\theta}\Big(1+
                \frac{\lVert \kappa^2
                  \rVert_{L^\infty(\Omega)}+d^{\frac{1}{2}}\max\limits_{1
                    \leq i \leq d} \lVert \partial_i \epsilon
                  \rVert_{L^\infty(\Omega)}\lambda_1^{\frac{1}{2}}}{\theta
                  \lambda_1 - \mu}\Big) \lVert f \rVert_{L^2(\Omega)}.
	\]
\end{proof}

In general when $\epsilon$ is a tensor, a direct application of
Fourier transform seems infeasible. Instead, we closely follow the
argument of \cite[Section 6.3, Theorem 4]{evans2010partial} to obtain
an estimate on $C_H$. An emphasis here is that we keep track of the
implicit constants.

\begin{lemma}\label{C_H00}
	Let \(L\) be as given in \cref{set-up}. Then
\begin{equation}\label{ch.estimate2}
C_H \leq N(\Omega)\Big(\Big(\frac{1+\lambda_1}{\theta \lambda_1 - \mu}
\Big)^2 + d C_0\Big)^{\frac{1}{2}},
\end{equation}
where $N(\Omega) \in \mathbb{N}$ and $C_0$ is defined in \cref{C_0}.
\end{lemma}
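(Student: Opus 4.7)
The plan is to adapt the classical proof of $H^2$ elliptic regularity from \cite[Section 6.3, Theorem 4]{evans2010partial} to the present complex-valued, tensor-coefficient setting, while carefully tracking the absolute constants rather than collapsing them into a single unspecified $C$. The strategy splits into two parts: an $L^2$/$H^1$ energy estimate, which produces the $\frac{1+\lambda_1}{\theta\lambda_1-\mu}$ factor in \cref{ch.estimate2}; and a local $H^2$ estimate via difference quotients in the interior and near the boundary, which produces the $dC_0$ term together with the combinatorial prefactor $N(\Omega)$.

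First, testing the weak form of $Lu=f$ against $\overline{u}$, taking real parts, and using \cref{h2,h3} together with the Poincar\'e inequality gives, exactly as in the proof of \cref{C_H0} (cf.\ \cref{poincare3,l22}),
$$\|u\|_{L^2(\Omega)} \leq \frac{\|f\|_{L^2(\Omega)}}{\theta\lambda_1-\mu}, \qquad \|\nabla u\|_{L^2(\Omega)} \leq \frac{\lambda_1^{1/2}\|f\|_{L^2(\Omega)}}{\theta\lambda_1-\mu},$$
so that $\|u\|_{H^1(\Omega)} \leq \frac{(1+\lambda_1)^{1/2}}{\theta\lambda_1-\mu}\|f\|_{L^2(\Omega)}$, which accounts for the first term inside the square root in \cref{ch.estimate2}.

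For the $H^2$ control, I follow Evans' two-step plan. For \emph{interior regularity}, I test the equation against $-D_k^{-h}(\zeta^2 D_k^h u)$, where $D_k^h$ is a first-order difference quotient in the $k$th coordinate direction and $\zeta$ is a smooth cutoff compactly supported in $\Omega$; using the uniform ellipticity of \cref{h2} and passing $h \to 0$ yields $\|D^2 u\|_{L^2(V)} \leq C_{\mathrm{int}}\|f\|_{L^2(\Omega)}$ on any precompact $V \Subset \Omega$. For \emph{boundary regularity}, since $\partial\Omega$ is smooth by \cref{h1}, I cover a tubular neighborhood of $\partial\Omega$ by finitely many coordinate patches (together with one interior chart, yielding the integer $N(\Omega)$) in which $\partial\Omega$ is straightened by a $C^2$ diffeomorphism. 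In each flattened chart the transformed operator remains uniformly elliptic (with coefficients and their first derivatives controlled by the chart data and $\|\epsilon\|_{W^{1,\infty}}$). Tangential difference quotients then yield $L^2$ bounds on the $d-1$ tangential second derivatives $\partial_\tau\partial_j u$, and the missing normal-normal derivative $\partial_{nn} u$ is recovered algebraically from the PDE using the lower bound $\Real[\tilde\epsilon^{nn}] \geq \theta' > 0$ guaranteed by \cref{ellipticity}. A subordinate partition of unity patches these into a global estimate, with the factor $d$ recording the $d$ second-order directions, the factor $N(\Omega)$ the number of patches, and the single constant $C_0$ absorbing the per-chart contributions from the cutoffs and diffeomorphisms.

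The main technical obstacle is the bookkeeping: one must verify that $C_0$ depends only on $\Omega$, $\theta$, $\mu$, $\lambda_1$, $\|\epsilon\|_{W^{1,\infty}}$, and $\|\kappa^2\|_{L^\infty}$, and not on $u$ or $f$, so that \cref{ch.estimate2} depends only on the problem data. Once this is done, squaring the $H^1$ bound, adding the interior and boundary $H^2$ estimates, taking square roots, and summing over the $N(\Omega)$ charts delivers the stated bound for $C_H$.
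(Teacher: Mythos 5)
Your proposal is correct and follows essentially the same route as the paper: a real-part energy estimate giving the $\frac{1+\lambda_1}{\theta\lambda_1-\mu}$ factor (your $H^1$ bound is in fact slightly sharper than the paper's \cref{LM3}, which is fine since it implies the stated form), then the Evans-style difference-quotient argument with the test function $-D_k^{-h}(\zeta^2 D_k^h u)$ for the interior $H^2$ bound yielding $dC_0$, and a finite cover of boundary charts with flattening diffeomorphisms contributing the factor $N(\Omega)$. The only cosmetic difference is bookkeeping: in the paper $C_0$ is exactly the interior difference-quotient constant of \cref{C_0} and the per-chart boundary constants are absorbed into $N(\Omega)$ (with the boundary step left as a sketch, which you actually spell out more fully via tangential quotients and algebraic recovery of the normal-normal derivative), whereas you fold the chart contributions into $C_0$; either organization delivers the stated inequality.
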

\begin{remark}
For every $\Omega$ such that $\lambda_1 \simeq d_{\Omega}^{-2}$, the
RHS of \cref{ch.estimate,ch.estimate2} converge to
$\frac{C(\Omega)}{\theta}$ as $d_\Omega \rightarrow 0$.
\end{remark}
\begin{remark}
Since the estimate of \cref{ch.estimate2} depends on the number of
finitely many open balls covering $\Omega$, the geometry of $\partial
\Omega$ plays a big role in the computation of $N(\Omega)$. This will
be pursued in future research.
\end{remark}
\begin{proof}
	Let $V \Subset W \Subset \Omega$. Let $\zeta \in
        C^\infty_c(\Omega)$ such that $0\leq \zeta \leq 1$ and $\zeta
        = 1$ on $V$, and $\mathrm{supp}(\zeta)\subseteq W$. Set $F =
        f- \kappa^2 u \in L^2(\Omega)$. Consider the weak form of
    \begin{equation}
		-\nabla \cdot (\epsilon\nabla u) + \kappa^2 u = f
                \quad \text{in } \Omega
	\end{equation}
	applied to the test function $\phi = -D_k^{-h} \zeta^2 D_k^h
        u$, where
	\[D_k^h u(x)= \frac{u(x+he_k) - u(x)}{h}\] and $\{e_k\}_{k=1}^d$
        forms the standard basis of $\mathbb{R}^d$. Using integration
        by parts and the product rule of discrete derivatives,
	\begin{equation*}
	\begin{aligned}
		\int_\Omega \epsilon^{ij} \partial_j u
                \overline{\partial_i \phi} &= \int_\Omega D_k^h
                (\epsilon^{ij}\partial_j u) \overline{\partial_i
                  (\zeta^2 D_k^h u)} \\ &= \int_{\Omega}
                (\epsilon^{ij,h} D_k^h \partial_j u + D_k^h
                \epsilon^{ij} \partial_j u) (\overline{2\zeta
                  \partial_i \zeta D_k^h u} + \overline{\zeta^2 D_k^h
                  \partial_i u})\\ &= \int_\Omega \zeta^2
                \epsilon^{ij,h} D_k^h \partial_j u \overline{D_k^h
                  \partial_i u} + R,
	\end{aligned}
	\end{equation*}
	where $\epsilon^{ij,h}(x) \coloneqq \epsilon^{ij}(x+he_k)$. By
        uniform ellipticity,
	\[
		\Real\int_\Omega \zeta^2 \epsilon^{ij,h} D_k^h
                \partial_j u \overline{D_k^h \partial_i u} \geq \theta
                \int_\Omega \zeta^2 |D_k^h \nabla u|^2.
	\]
	The other three products are estimated above by the
        Cauchy-Schwarz inequality:
	\begin{equation}\label{R}
	\begin{split}
		R &\leq \left|\int_\Omega 2 \zeta \partial_i \zeta
                \epsilon^{ij,h} D_k^h \partial_j u \overline{D_k^h
                  u}\right| + \left| \int_\Omega 2\zeta \partial_i
                \zeta D_k^h \epsilon^{ij} \partial_j u \overline{D_k^h
                  u}\right| + \left|\int_\Omega D_k^h \epsilon^{ij}
                \zeta^2 \partial_j u \overline{D_k^h \partial_i u}
                \right| \\ &\leq 2 \lVert \nabla \zeta
                \rVert_{L^\infty(\Omega)} \lVert \epsilon
                \rVert_{W^{1,\infty}(\Omega)}\Big( \int_\Omega \zeta
                |D_k^h \nabla u||D_k^h u| + \int_\Omega \zeta |\nabla
                u||D_k^h u|\Big)+ \lVert \epsilon
                \rVert_{W^{1,\infty}(\Omega)} \int_\Omega \zeta
                |\partial_j u| |D_k^h \partial_i u|.
	\end{split}
	\end{equation}
	Recalling the following variant of Cauchy-Schwarz inequality
	\[
		ab \leq \frac{a^2}{2\delta} + \frac{\delta b^2}{2},
	\]
	for $a,b \geq 0$ and $\delta>0$ and the following control of
        discrete derivatives with respect to the continuous
        derivatives for sufficiently small $|h|>0$,
	\begin{equation}\label{discretederiv}
		\lVert D_k^h \phi\rVert_{L^2(V)} \leq \lVert
                \partial_k \phi \rVert_{L^2(\Omega)},\:\forall \phi
                \in H^1(\Omega),\: V \Subset \Omega,
	\end{equation}
	\cref{R} is bounded above by
	\[
		\leq C_1\delta \int_\Omega \zeta^2 |D_k^h \nabla u|^2
                + C_2\int_{\Omega} |\nabla u|^2
	\]
	where
	\[
	\begin{split}
		C_1 &\coloneqq \lVert \epsilon
                \rVert_{W^{1,\infty}(\Omega)}\Big(\lVert \nabla \zeta
                \rVert_{L^\infty(\Omega)} + \frac{1}{2}\Big) \quad
                \text{and} \quad C_2 \coloneqq \lVert \epsilon
                \rVert_{W^{1,\infty}(\Omega)}\Big(2\lVert \nabla \zeta
                \rVert_{L^\infty(\Omega)} +\frac{1+2\lVert \nabla
                  \zeta \rVert_{L^\infty(\Omega)}}{2\delta}\Big).
	\end{split}
	\]
	Choosing $\delta = \frac{\theta}{2C_1}$, we use the triangle
        inequality to obtain
	\begin{equation}\label{lowerbound2}
		\Real\int_\Omega (\epsilon \nabla u) \cdot
                \overline{\nabla u} \geq \frac{\theta}{2}\int_\Omega
                \zeta^2 |D_k^h \nabla u|^2 - C_2 \int_\Omega |\nabla
                u|^2.
	\end{equation}
	On the other hand, we estimate the right-hand side of the weak
        form:
	\[
		\left|\int_\Omega F \overline{\phi}\right| \leq
                \frac{1}{2\delta} \int_\Omega |F|^2 +
                \frac{\delta}{2}\int_\Omega |\phi|^2,
	\]
	where the first term is estimated above as in \cref{C_H4}.
	\begin{equation*}
	\begin{aligned}
		\int_\Omega |\phi|^2 &\leq \int_\Omega |\partial_k (\zeta^2 D_k^h u)|^2 \\
		&\leq 2 \int_\Omega |2\zeta \partial_k \zeta D_k^h u|^2 + 2 \int_\Omega \zeta^2 |D_k^h \partial_k u|^2\\
		&\leq 8 \lVert \nabla \zeta \rVert_{L^\infty(\Omega)}^2 \int_\Omega |\nabla u|^2 + 2 \int_\Omega \zeta^2 |D_k^h \nabla u|^2,
	\end{aligned}
	\end{equation*}
	where the last inequality is by \cref{discretederiv}. Let $\delta = \frac{\theta}{4}$. Then,
	
	\begin{equation}\label{lowerbound3}
		\left| \int_\Omega F \overline{\phi}\right| \leq
                \frac{2}{\theta} \Big(1+\frac{\lVert \kappa^2
                  \rVert_{L^\infty(\Omega)}}{\theta \lambda_1 - \mu}
                \Big)^2 \int_\Omega |f|^2 + \theta \lVert \nabla \zeta
                \rVert_{L^\infty(\Omega)}^2 \int_\Omega |\nabla u|^2 +
                \frac{\theta}{4} \int_\Omega \zeta^2 |D_k^h \nabla
                u|^2.
	\end{equation}
	Combining \cref{lowerbound3} and \cref{lowerbound2},
	\[
		\frac{\theta}{4} \int_V |D_k^h \nabla u|^2 \leq
                \frac{\theta}{4} \int_\Omega \zeta^2 |D_k^h \nabla
                u|^2 \leq \frac{2}{\theta}\Big(1+ \frac{\lVert
                  \kappa^2 \rVert_{L^\infty(\Omega)}}{\theta \lambda_1
                  - \mu} \Big)^2 \int_\Omega |f|^2 + (C_2 + \theta
                \lVert \nabla \zeta \rVert_{L^\infty(\Omega)}^2)
                \int_\Omega |\nabla u|^2,
	\]
	and by \cref{l22},
	\begin{equation}
	\begin{split}
	\label{C_0}
		\int_V |D_k^h \nabla u|^2 &\leq C_0 \int_\Omega |f|^2,
                \\ C_0 &=
                \frac{4}{\theta}\bigg(\frac{2}{\theta}\Big(1+
                \frac{\lVert \kappa^2
                  \rVert_{L^\infty(\Omega)}}{\theta \lambda_1 - \mu}
                \Big)^2+ \frac{\lambda_1 (C_2 + \theta \lVert \nabla
                  \zeta \rVert_{L^\infty(\Omega)}^2)}{(\theta
                  \lambda_1 - \mu)^2} \bigg).
	\end{split}
	\end{equation}
	By \cite[Section 5.8.2, Theorem 3]{evans2010partial}, this
        shows $\partial_k \nabla u \in L^2(V,\mathbb{C}^d)$ for all $1
        \leq k \leq d$ with the same bound on the $L^2$-norm. Hence,
	\begin{equation}\label{h2homogeneous}
		\sum_{1\leq i,j \leq d} \lVert \partial_{ij} u
                \rVert_{L^2(V)}^2 \leq d C_0 \int_\Omega |f|^2.
	\end{equation}
	Recall that the linear theory (\cref{LM} and \cref{LM2})
        yields
	\begin{equation}\label{LM3}
		\lVert u \rVert_{H^1(\Omega)} \leq
                \frac{1+\lambda_1}{\theta \lambda_1 - \mu} \lVert f
                \rVert_{L^2(\Omega)}.
	\end{equation}
	Combining \cref{h2homogeneous} with \cref{LM3}, we obtain
	\begin{equation}\label{h2inhomogeneous}
		\lVert u \rVert_{H^2(V)} \leq
                \Big(\Big(\frac{1+\lambda_1}{\theta \lambda_1 - \mu}
                \Big)^2 + d C_0\Big)^{\frac{1}{2}}\lVert f
                \rVert_{L^2(\Omega)}.
	\end{equation}
	Since $\Omega$ is bounded, $\{x \in \Omega: \inf_{y \in
          \partial\Omega}|x-y| \geq \delta\}$ can be covered by
        finitely many open sets for every $\delta>0$. Given any point
        $y \in \partial \Omega$, there exists a diffeomorphism that
        takes a small neighborhood of $y$ (in $\overline{\Omega}$)
        into a neighborhood in the half-plane $\mathbb{R}^d_+
        \coloneqq \mathbb{R}^{d-1}\times [0,\infty)$ where $y$ is
          identified with $0 \in \mathbb{R}^d_+$. Via this
          diffeomorphism, one can show that the $H^2$-norm of $u$ in
          the neighborhood of $y$ obeys an esmate similar to
          \cref{h2inhomogeneous}. Hence, there exists $N = N(\Omega)
          \in \mathbb{N}$ such that
	\[
		\lVert u \rVert_{H^2(\Omega)} \leq
                N(\Omega)\Big(\Big(\frac{1+\lambda_1}{\theta \lambda_1
                  - \mu} \Big)^2 + d C_0\Big)^{\frac{1}{2}} \lVert f
                \rVert_{L^2(\Omega)}.
	\]
\end{proof}

\section{Failure of Uniqueness}\label{appendixB}
Most notably, our result has a smallness assumption on data $(f,g)$
and further restrictions on the given parameters; see \cref{h1,h2,h3}
in \cref{exuniq} and \cref{Schauder,Banach}. When \cref{npb} is
complexified, it is not obvious whether or not these sufficient
conditions are in fact necessary. We construct an example of nPBE that
admits multiple solutions. By construction, this family of nPBEs fails
to satisfy the invertibility condition given in \cref{h3} and/or the
smallness assumption on $(f,g) \in L^2(\Omega)\times
H^{\frac{3}{2}}(\partial \Omega)$. In particular, this example is
consistent with the well-known uniqueness result of
\cite{kwong1989uniqueness}.

We wish to obtain a radial solution $u(x) = u(|x|) = y(r)$, where
$r=|x|\geq 0$, to \cref{npb} where $\epsilon=1$ for simplicity and
$\kappa = i\tilde{\kappa} \in i \mathbb{R}$ on domain $\Omega =
B(0,R)\subseteq \mathbb{R}^d$ for $R>0, d\geq 1$ and $f(x) = \lambda
\in \mathbb{R}, g(x) =
\sinh^{-1}(\frac{\lambda}{\tilde{\kappa}^2})$. In the polar
coordinate, our example reduces to an ODE

\begin{equation}\label{nonlinearBessel}
\begin{split}
		r y^{\prime\prime}+(d-1)y^\prime + \tilde{\kappa}^2
                r\sinh y&=r\lambda,\: r \in (0,R)\\ y(R) &=
                \sinh^{-1}\Big(\frac{\lambda}{\tilde{\kappa}^2}\Big).
\end{split}
\end{equation}
where it is clear that the constant function $r\mapsto
\sinh^{-1}\Big(\frac{\lambda}{\tilde{\kappa}^2}\Big)$ is a trivial
solution. Since \cref{nonlinearBessel} is symmetric under $r\mapsto
-r$, we may consider $\lambda \geq 0$. It is also clear that $(f,g)
\in L^2(\Omega)\times H^{\frac{3}{2}}(\partial\Omega)$ can be taken as
large as possible (in norm) by taking $\lambda$ arbitrarily
large. Furthermore, we note that \cref{h3} is violated when $R \gg 1$
depending on $\tilde{\kappa}$. To elaborate, fix
$\tilde{\kappa}>0$. If \cref{h3} holds, then $\tilde{\kappa}^2 \leq
\mu < \lambda_1 = \frac{C_B}{R^2}$. Hence if $R
>\frac{C_B}{\tilde{\kappa}}$, then \cref{h3} cannot hold.

\begin{proposition}\label{nonunique}
	Let $d\geq 1, \tilde{\kappa}>0, \lambda \geq 0$. Then, there
        exists a non-trivial solution to \cref{nonlinearBessel} with
        $R>\frac{C_B}{\tilde{\kappa}}$.
\end{proposition}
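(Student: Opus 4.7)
The plan is to reduce \cref{nonlinearBessel} to a homogeneous Dirichlet problem on the ball and produce a nontrivial radial solution via shooting. Setting $v := y - y_0$ with $y_0 := \sinh^{-1}(\lambda/\tilde\kappa^2)$ and using $\tilde\kappa^2\sinh y_0 = \lambda$, \cref{nonlinearBessel} becomes
\begin{equation*}
	v'' + \tfrac{d-1}{r} v' + g(v) = 0, \quad v'(0)=0,\ v(R)=0,
\end{equation*}
where $g(v) := \tilde\kappa^2[\sinh(y_0+v) - \sinh y_0]$ is strictly increasing with $g(0)=0$, $g'(0) = A := \tilde\kappa^2\cosh y_0 \geq \tilde\kappa^2$, and has a strictly convex, coercive primitive $G(v):=\int_0^v g(s)\,ds$. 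The trivial solution $v\equiv 0$ corresponds to $y\equiv y_0$, so the task reduces to finding a nontrivial $v$.

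For each $\alpha\in\mathbb{R}$ let $v(\cdot;\alpha)$ be the unique $C^2$ solution of the Cauchy problem $v(0)=\alpha$, $v'(0)=0$. The damped-energy identity $\tfrac{d}{dr}\left[\tfrac12 v'(r)^2 + G(v(r))\right] = -\tfrac{d-1}{r} v'(r)^2 \leq 0$ together with coercivity of $G$ yields the uniform bound $|v(r;\alpha)| \leq G^{-1}(G(\alpha))$, hence global existence and continuity of the shooting map $\Phi(\alpha) := v(R;\alpha)$, with $\Phi(0)=0$. Differentiating the IVP in $\alpha$ at $\alpha=0$ produces the Bessel-type equation $z'' + \tfrac{d-1}{r} z' + Az = 0$ with $z(0)=1$, $z'(0)=0$, whose first positive zero is $r_\ast = j_{(d-2)/2,1}/\sqrt{A}$. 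Since $A\geq \tilde\kappa^2$, the hypothesis $R > C_B/\tilde\kappa$ (consistent with \cref{poincaresharp}) forces $r_\ast < R$, so $\Phi'(0) = z(R) \neq 0$ for generic $R$ and $\Phi$ has a definite sign on a punctured neighborhood of $\alpha=0$.

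For the opposite sign, the plan is to track the first positive zero $r_1(\alpha)$ of $v(\cdot;\alpha)$. By Sturm--Picone comparison, using $g(v) \geq Av$ for $v\geq 0$ (convexity of $g$ on $[-y_0,\infty)$), $r_1(\alpha) \leq r_\ast$ for every $\alpha>0$; and by a scaling argument exploiting the exponential growth $g(v) \sim \tfrac12 \tilde\kappa^2 e^{y_0+v}$ as $v\to\infty$, one expects $r_1(\alpha) \to 0$ as $\alpha\to\infty$. Consequently, for large $\alpha$ the trajectory $v(\cdot;\alpha)$ oscillates many times in $(0,R)$, so $\Phi(\alpha)$ assumes both signs as $\alpha$ ranges over $(0,\infty)$. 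The intermediate value theorem then produces $\alpha^\ast \neq 0$ with $\Phi(\alpha^\ast) = 0$, yielding the nontrivial solution $y = y_0 + v(\cdot;\alpha^\ast)$ of \cref{nonlinearBessel}.

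The main obstacle is making the limit $r_1(\alpha)\to 0$ precise in dimension $d\geq 2$, where the damping term $\tfrac{d-1}{r} v'$ prevents the clean phase-plane analysis available in the autonomous $d=1$ case. I would handle this by isolating a boundary layer near $r=0$ of width comparable to $e^{-\alpha/2}$ on which the energy decrement induced by damping remains subdominant to the exponentially large restoring term, and then closing the estimate by a Gronwall-type inequality. Alternatively, one may bypass these details via Rabinowitz global bifurcation in the parameter $R$: the linearization $-\Delta - A I$ on $B(0,R)$ becomes singular precisely at $R = C_B/\tilde\kappa$, and a standard nodal-count argument along the resulting unbounded continuum of nontrivial solutions excludes collapse back to the trivial branch, forcing its $R$-projection to cover all of $(C_B/\tilde\kappa,\infty)$.
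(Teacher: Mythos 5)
Your plan and the paper's proof go in opposite directions, and the difference matters. The statement only asks for \emph{some} $R>\frac{C_B}{\tilde{\kappa}}$, and the paper exploits exactly this freedom: in \cref{NonuniqueNonlinearBessel} it fixes the initial amplitude $c$, constructs a single global solution of the singular initial value problem at $r=0$ by regularizing $r\mapsto r+\epsilon$, using monotonicity of the Hamiltonian $H(y,w)=\frac{w^2}{2}+\tilde{\kappa}^2(\cosh y-1)-\lambda y$ along the damped flow and Arzel\`a--Ascoli to pass $\epsilon\to 0$, and then reads off from the oscillation of this one trajectory an unbounded sequence of radii $R_n\to\infty$ at which the boundary value is attained; \cref{nonunique} follows by choosing $R=R_n$ beyond the threshold and setting $A=d-1$. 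You instead fix $R$ and shoot in the amplitude $\alpha$, which, if completed, would prove a stronger statement (solvability at a prescribed radius), but it imports precisely the difficulties the paper's route is designed to avoid, and those are the places where your argument is not yet a proof.

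Concretely: (i) you assume existence, uniqueness, global continuation, continuity and even differentiability in $\alpha$ of solutions of the Cauchy problem $v(0)=\alpha$, $v'(0)=0$ for an equation with the singular coefficient $\frac{d-1}{r}$; this is not Picard--Lindel\"of, it is the technical heart of the paper's proof (which, even after regularization, only extracts a solution along a subsequence), and while it can be repaired via the integral equation $v(r)=\alpha-\int_0^r s^{1-d}\int_0^s t^{d-1}g(v(t))\,dt\,ds$, it has to be done before $\Phi$ and $\Phi'(0)$ exist. (ii) The step that makes the intermediate value theorem bite --- that $\Phi$ changes sign on $(0,\infty)$ --- rests on the unproven claim $r_1(\alpha)\to 0$ as $\alpha\to\infty$ for $d\geq 2$, which you yourself flag as the main obstacle and only sketch; moreover ``oscillates many times in $(0,R)$'' does not by itself yield both signs of $v(R;\alpha)$: you need the nodal-tracking argument (zeros of nontrivial solutions are simple, so the zero count can change only when a zero crosses $r=R$, at which moment $\Phi=0$) spelled out. (iii) ``$\Phi'(0)=z(R)\neq 0$ for generic $R$'' is not available if $R$ is prescribed, and the fallback via global bifurcation in $R$ is also unverified: the first critical radius of the linearization at $y_0$, where $-\Delta-\tilde{\kappa}^2\cosh(y_0)$ degenerates, can lie below $\frac{C_B}{\tilde{\kappa}}$, so you would need bifurcation from higher radial eigenvalues or control of the continuum's $R$-projection. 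Finally, note that if you allow yourself to choose $R$ --- which the statement permits --- your own energy identity plus the oscillation of a single trajectory already finishes the argument; that is essentially the paper's proof, and it makes the fixed-$R$ shooting machinery unnecessary for this proposition.
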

	
Reducing \cref{nonlinearBessel} into a first-order ODE by introducing
$w = y^\prime$, we obtain \begin{equation}\label{phaseportrait}
		\begin{pmatrix}
			y\\
			w
		\end{pmatrix}^\prime = F(r,y,w)\coloneqq
		\begin{pmatrix}
			w\\
			-\tilde{\kappa}^2\sinh y - (d-1)\frac{w}{r} + \lambda
		\end{pmatrix}.
	\end{equation}
	
	For $d=1$, \cref{phaseportrait} admits an autonomous
        Hamiltonian vector field where the Hamiltonian is given by
	\[
		H(y,w) = \frac{w^2}{2} + \tilde{\kappa}^2(\cosh y -1)
                - \lambda y.
	\]
	Since the level sets of $H$ are a collection of closed
        one-dimensional curves, all solutions are global and
        periodic. The inner curves have lower values of $H$ than the
        outer curves. Indeed, the global minimum of $H$ occurs at
        $P=(\sinh^{-1}\Big(\frac{\lambda}{\tilde{\kappa}^2}\Big),0)$
        where $H(P)\leq 0$ with the equality if and only if $\lambda =
        0$. Hence for each initial datum $\begin{pmatrix} c\\ 0
	\end{pmatrix}$,
        there exists a unique solution $y$ to \cref{nonlinearBessel}
        where $y(R) = 0$ for infinitely many $R>0$. We include a phase
        portrait where the solutions lie on the curves of constant
        Hamiltonian.
	
	\begin{figure}[H]
	\begin{tikzpicture}

		\node at (0,0) {\includegraphics[scale=0.3, trim =
                    0.5cm 0.15cm 0.05cm 0.05cm,
                    clip]{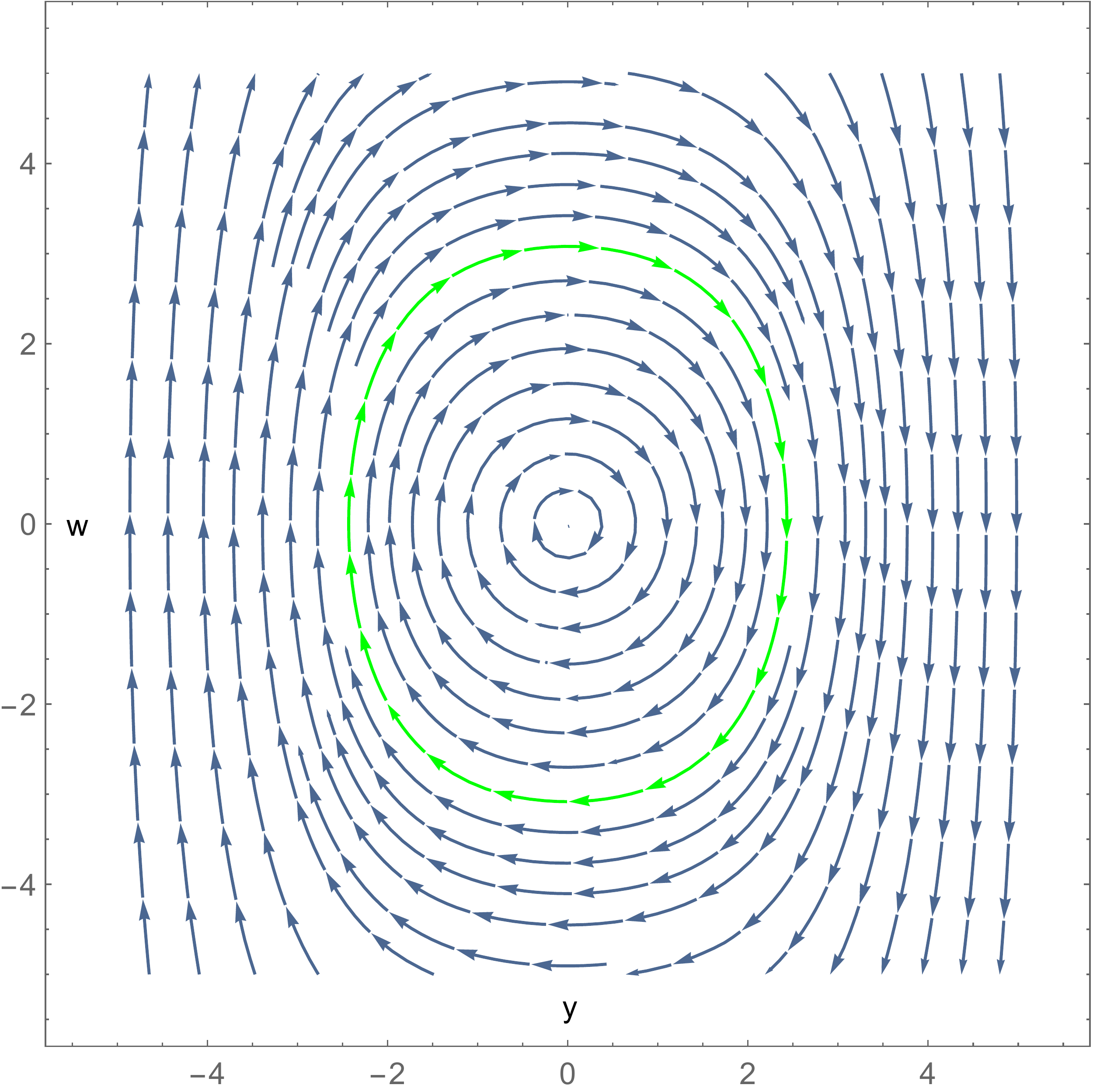}};

		\fill [white] (-0.5,-3.4) rectangle (0.5,-3.1);
		\fill [white] (-3.5,-1) rectangle (-3.1,1);
		
		\node at (-3.25,0.15) {$w$};
		\node at (0.1,-3.25) {$y$};

	    \node at (8.5,0) {\includegraphics[scale=0.3, trim = 0.5cm
                0.15cm 0.05cm 0.05cm,
                clip]{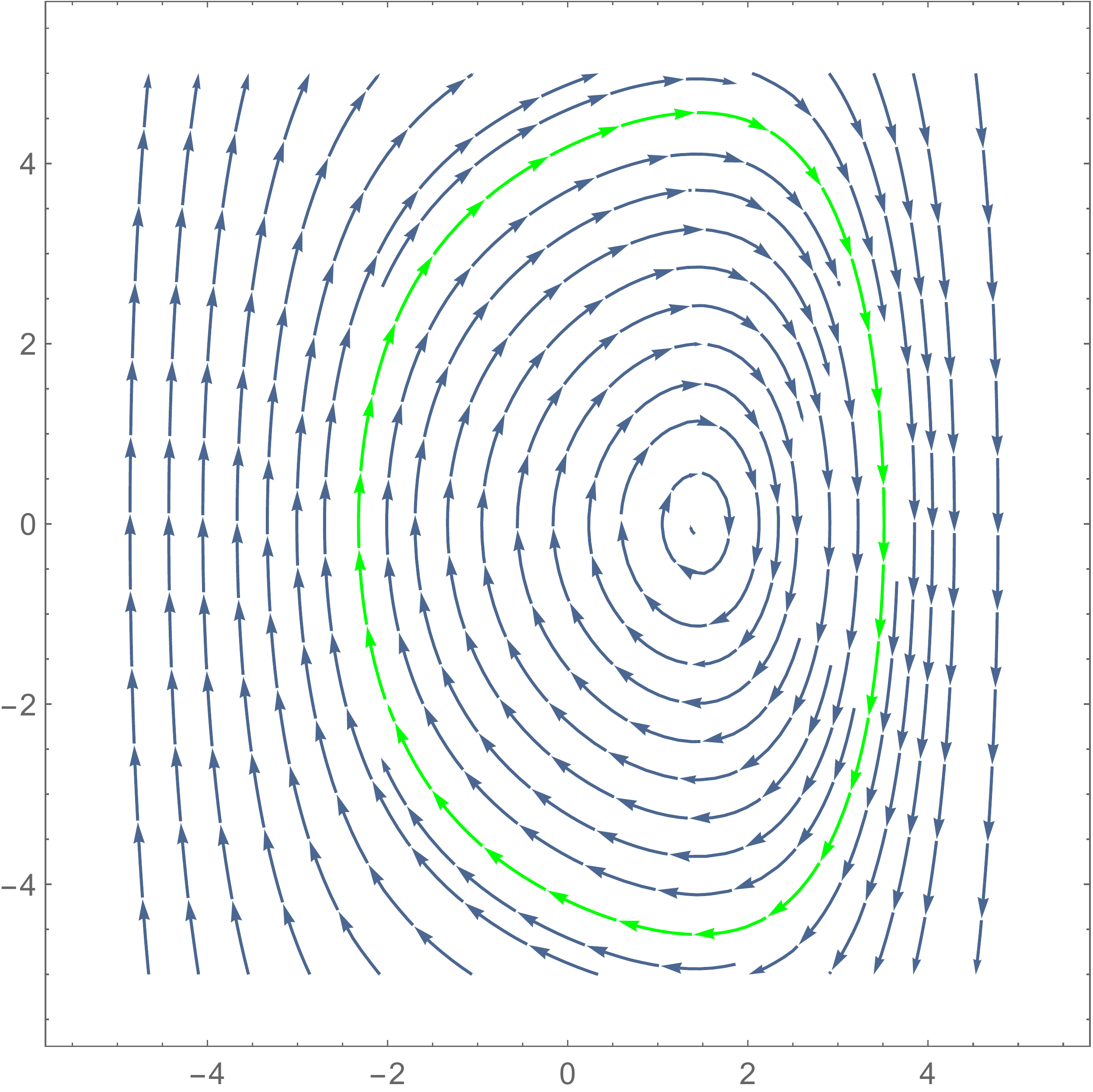}};

		\end{tikzpicture}
		\caption{Vector fields of \cref{phaseportrait} with
                  $d=1,\tilde{\kappa}=1$ with the left plot portraying
                  $\lambda=0$, and the right $\lambda=2$.}
	\end{figure}

For $d \geq 2$, the vector field corresponding to \cref{phaseportrait}
is non-autonomous, and $F$ in \cref{phaseportrait} is not well-defined
at $r=0$ where our initial data are given. We regularize the ODE so
that the regularized vector field is continuous (in $r$) near $r=0$,
and show that the limiting solution satisfies
\cref{nonlinearBessel}. We solve an ODE that is slightly more general
than \cref{nonlinearBessel}. We use the notations of \cref{nonunique}.

\begin{lemma}\label{NonuniqueNonlinearBessel}
    For every $A \geq 0$ and $c\in \mathbb{R}$, there exists
    $R>\frac{C_B}{\tilde{\kappa}}$ and $y \in
    C^\infty_{\mathrm{loc}}((0,\infty);\mathbb{R})$ such that $y$
    satisfies
\begin{equation}\label{NonuniqueNonlinearBessel2}
\begin{split}
    r y^{\prime\prime}+Ay^\prime + \tilde{\kappa}^2 r\sinh
    y&=r\lambda,\: r \in (0,\infty),\\ \lim\limits_{r \to 0+} y(r) =
    c,\: \lim\limits_{r \to 0+} y^\prime(0+) = 0,\: y(R) &=
    \sinh^{-1}\Big(\frac{\lambda}{\tilde{\kappa}^2}\Big).
    \end{split}
\end{equation}
\end{lemma}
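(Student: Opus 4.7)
The plan is to construct $y$ in three stages: produce a local smooth solution at the singular point $r=0$ carrying the prescribed limiting data, extend it globally via an energy estimate, and then locate $R$ using an oscillation argument. The two sources of difficulty are the $1/r$-singularity of the vector field at $r=0$ (so Picard--Lindel\"of does not apply directly) and the possibility, for $A>0$, that the solution approaches the equilibrium $y^{\ast}\coloneqq \sinh^{-1}(\lambda/\tilde{\kappa}^2)$ monotonically without ever crossing it.

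For the first two steps, I would rewrite the ODE in divergence form $(r^A y')' = r^A(\lambda - \tilde{\kappa}^2 \sinh y)$, and use the limits $y(0+) = c$ together with $r^A y'(r) \to 0$ to derive the equivalent integral equation
\[
    y(r) = c + \int_0^r s^{-A}\int_0^s t^A\bigl(\lambda - \tilde{\kappa}^2 \sinh y(t)\bigr)\,dt\,ds.
\]
On $C([0,r_0])$ the right-hand side is a contraction for $r_0$ small (the double integral yields a factor of order $r^2$ against Lipschitz differences on bounded sets), giving a unique continuous local solution with $y'(r)=O(r)$ and $y''(0+) = (\lambda - \tilde{\kappa}^2 \sinh c)/(A+1)$; standard bootstrapping from the ODE then upgrades this to $y \in C^\infty_{\mathrm{loc}}((0,\infty);\mathbb{R})$. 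For global extension I introduce the energy
\[
    E(r) \coloneqq \tfrac{1}{2}(y'(r))^2 + \tilde{\kappa}^2(\cosh y(r) - 1) - \lambda y(r),
\]
which satisfies $E'(r) = -(A/r)(y'(r))^2 \leq 0$ by direct computation using the ODE. Coercivity of $y \mapsto \tilde{\kappa}^2(\cosh y - 1) - \lambda y$ in $y$ then turns $E(r)\leq E(0+)$ into uniform bounds on $|y|$ and $|y'|$, precluding blow-up.

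The heart of the argument is locating $R$. Setting $\phi \coloneqq y - y^{\ast}$, the function $\phi$ satisfies $\phi'' + (A/r)\phi' + h(\phi) = 0$ with $h(\phi)\coloneqq \tilde{\kappa}^2 \sinh(y^{\ast}+\phi) - \lambda$; since $h'(\phi) = \tilde{\kappa}^2\cosh(y^{\ast}+\phi) > 0$ and $h(0)=0$, the mean-value theorem gives $h(\phi)/\phi \geq \omega^2 \coloneqq \tilde{\kappa}^2 > 0$ for every nonzero $\phi$ in the bounded range guaranteed by the energy estimate. The Liouville substitution $v(r)\coloneqq r^{A/2}\phi(r)$ eliminates the first-order term, yielding on intervals where $\phi \neq 0$
\[
    v''(r) + \Bigl(\tfrac{h(\phi(r))}{\phi(r)} - \tfrac{A(A-2)}{4r^2}\Bigr) v(r) = 0.
\]
For $r$ large the bracketed coefficient is bounded below by $\omega^2/2$, so Sturm comparison with $w'' + (\omega^2/2)w = 0$ forces consecutive zeros of $v$, hence of $\phi$, to be at most $\pi\sqrt{2}/\omega$ apart. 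The zero set of $y - y^{\ast}$ therefore accumulates at $\infty$, and I pick $R$ to be any such zero exceeding $C_B/\tilde{\kappa}$. The trivial case $c = y^{\ast}$ is handled by $y \equiv y^{\ast}$.

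The hard part is the final oscillation step. The integral-equation setup and energy bound are essentially routine, but the Sturm comparison has to be applied carefully because $q(r)\coloneqq h(\phi(r))/\phi(r)$ is only defined where $\phi(r)\neq 0$. A zero $r_0$ of $\phi$ at which $\phi'(r_0)=0$ would force $\phi\equiv 0$ by uniqueness of the linear initial value problem, contradicting $c\neq y^{\ast}$; hence zeros of $\phi$ are isolated with sign changes of $\phi'$, and the comparison argument can be applied piecewise. The lower bound $\omega^2=\tilde{\kappa}^2$ is independent of $c$, so the spacing estimate $\pi\sqrt{2}/\omega$ is uniform, which is what ultimately allows $R$ to be chosen beyond $C_B/\tilde{\kappa}$ regardless of how large $|c|$ becomes.
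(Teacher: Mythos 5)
Your proposal is correct, but it reaches the conclusion by a genuinely different route than the paper. The paper regularizes the singular coefficient, replacing $r$ by $r+\epsilon$, solves the perturbed problem globally (using the same Lyapunov functional $H(y,w)=\frac{w^2}{2}+\tilde{\kappa}^2(\cosh y-1)-\lambda y$ and the dissipation identity $\frac{d}{dr}H=-\frac{A(y')^2}{r+\epsilon}$), and then passes to the limit $\epsilon\to 0$ via Arzel\`a--Ascoli and weak $L^2$ compactness to recover a solution with $y(0+)=c$, $y'(0+)=0$; the existence of crossings $y(R_n)=\sinh^{-1}(\lambda/\tilde{\kappa}^2)$ is then argued qualitatively from the phase portrait. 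You instead construct the solution directly at the singular point by a contraction argument for the integral equation coming from the divergence form $(r^Ay')'=r^A(\lambda-\tilde{\kappa}^2\sinh y)$, which avoids regularization and compactness altogether and immediately yields the limits at $r=0+$; your global extension uses the same energy decay as the paper. Where your argument adds real value is the last step: the Liouville substitution $v=r^{A/2}(y-y^\ast)$ together with Sturm comparison against $w''+\frac{\tilde{\kappa}^2}{2}w=0$ gives a quantitative, uniform bound on the spacing of successive crossings of $y^\ast$, making rigorous (and sharpening) the paper's rather terse oscillation claim. One small simplification: instead of defining $q(r)=h(\phi(r))/\phi(r)$ only where $\phi\neq 0$ and arguing piecewise through simple zeros, write $h(\phi)=\phi\int_0^1 h'(s\phi)\,ds$ so that $q(r)\coloneqq\int_0^1 \tilde{\kappa}^2\cosh\bigl(y^\ast+s\phi(r)\bigr)\,ds\geq\tilde{\kappa}^2$ is continuous on all of $(0,\infty)$; then $\phi$ satisfies a linear equation with continuous coefficient globally and the comparison (and the simplicity of zeros) follows with no case analysis.
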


\begin{proof}[Proof of \cref{nonunique}]
    Set $A = d-1$.
\end{proof}

\begin{proof}[Proof of \cref{NonuniqueNonlinearBessel}]
    The $A=0$ case is equal to that when $d=1$, and therefore assume
    $A>0$. Moreover, assume $c \neq
    \sinh^{-1}\Big(\frac{\lambda}{\tilde{\kappa}^2}\Big)$ since it
    yields a trivial solution. For $\epsilon>0$, consider the
    perturbed ODE:
\begin{equation}\label{perturbedBessel}
\begin{split}
    (r+\epsilon)y_\epsilon^{\prime\prime} + Ay_\epsilon^\prime +
  \tilde{\kappa}^2(r+\epsilon) \sinh y_\epsilon &=
  (r+\epsilon)\lambda,\: r \in
        [-\frac{\epsilon}{2},\infty),\\ y_\epsilon(0) = c,\:
          y_\epsilon^\prime(0) &= 0,
    \end{split}
\end{equation}
which, after setting $w_\epsilon = y_\epsilon^\prime$, reduces to

\[
   \begin{pmatrix}
			y_\epsilon\\
			w_\epsilon
		\end{pmatrix}^\prime = F_\epsilon(r,y_\epsilon,w_\epsilon)\coloneqq
		\begin{pmatrix}
			w_\epsilon\\
			-\tilde{\kappa}^2\sinh y_\epsilon - \frac{A w_\epsilon}{r+\epsilon} - \lambda
		\end{pmatrix}. 
\]

Since $F_\epsilon$ is smooth in $r$ near $r=0$ and locally Lipschitz
in $(y,w)$, there exists $T_\epsilon \in (0, \frac{\epsilon}{2})$ and
$y_\epsilon \in C([-T_\epsilon,T_\epsilon];\mathbb{R})\cap
C_{\mathrm{loc}}^\infty((-T_\epsilon,T_\epsilon);\mathbb{R})$ such
that $y_\epsilon$ is a unique solution to \cref{perturbedBessel}. In
the maximal interval of existence, $\begin{pmatrix}
  y_\epsilon\\ w_\epsilon
\end{pmatrix}$
satisfies
\[
\frac{d}{dr} H(y_\epsilon(r),w_\epsilon(r)) =
w_\epsilon(r)(y_\epsilon^{\prime\prime}(r) + \tilde{\kappa}^2\sinh
y_\epsilon(r)-\lambda) = - \frac{Aw_\epsilon(r)^2}{r+\epsilon} \leq
0,\: r \geq -\frac{\epsilon}{2}
\]
and therefore the forward orbit of
$\begin{pmatrix}
y_\epsilon\\
w_\epsilon
\end{pmatrix}$
is bounded in the compact subset $\{(y,w) \in \mathbb{R}^2: H(y,w)
\leq H(c,0)\}$ on which $F_\epsilon$ is Lipschitz. Hence,
$\begin{pmatrix}
y_\epsilon\\
w_\epsilon
\end{pmatrix}$
can be uniquely extended globally in forward time, obeying the estimate
\begin{equation}\label{uniformbound}
    H(y_\epsilon(r),w_\epsilon(r)) \leq H(c,0),\: r \geq 0.
\end{equation}

This global bound on $|y_\epsilon|+|w_\epsilon|$ yields an existence
of a limit function, since for $r_1, r_2 \geq 0$,
\[
|y_\epsilon(r_2) - y_\epsilon(r_1)| = \left|\int_{r_1}^{r_2}
w_\epsilon(\rho)d\rho\right| \leq C |r_2-r_1|,
\]
where $C>0$ is independent of $\epsilon>0$. An immediate application
of Arzel\`{a}-Ascoli Theorem implies that there exists a subsequence
$\epsilon_k>0$ that tends to zero (from the right) and $y \in
C_{\mathrm{loc}}([0,\infty);\mathbb{R})$ such that
  $y_{\epsilon_k}\xrightarrow[k\to 0]{}y$ in the topology of uniform
  convergence on compact subsets; in particular, $y(0)=c$.

Let $T>0$. Since $\{w_{\epsilon_k}\}$ is uniformly bounded in
$L^2((0,T);\mathbb{R})$ due to \cref{uniformbound}, there exists a
subsequence of $\{\epsilon_k\}$ and $w \in L^2((0,T);\mathbb{R})$ such
that, possibly after relabelling the subsequence, $w_{\epsilon_k}
\rightharpoonup w$ in $L^2((0,T);\mathbb{R})$. This weak convergence
of derivatives and the uniform convergence $y_{\epsilon_k} \rightarrow
y$ on $[0,T]$ implies that $w$ is the weak derivative of
$y$. Furthermore, we have
$((r+\epsilon_k)y_{\epsilon_k}^\prime)^\prime (r) = (1-A)
y_{\epsilon_k}^\prime - \tilde{\kappa}^2(r+\epsilon_k) \sinh
y_{\epsilon_k}(r) + (r+\epsilon_k)\lambda$ from \cref{perturbedBessel}
where the right-hand side is uniformly bounded in
$L^2((0,T);\mathbb{R})$. Another application of the Arzel\`{a}-Ascoli
Theorem implies that there exists $Y \in C([0,T];\mathbb{R})$ such
that $(\cdot+\epsilon_k)y_{\epsilon_k}^\prime \xrightarrow[k\to
  \infty]{} Y$ in $C([0,T];\mathbb{R})$, possibly after relabelling
the subsequence, and follows $y_{\epsilon_k}^\prime \xrightarrow[k \to
  \infty]{} \frac{Y}{r}$ in $C([\delta,T];\mathbb{R})$ for every
$\delta>0$, and therefore we identify $w(r)$ with a continuous
function $\frac{Y(r)}{r}$ on $(0,T)$; indeed, $w = y^\prime$
classically on $(0,T)$. Yet another application of \cref{uniformbound}
and the triangle inequality $|w(r)| \leq
|w(r)-y_{\epsilon_k}^\prime(r)| + |y_{\epsilon_k}^\prime(r)|$ yields
the bound $|w(r)| \leq M$ for some $M>0$ on $(0,T)$.

Since $y_{\epsilon_k}$ is a classical solution to
\cref{perturbedBessel}, it is also a weak solution. Writing
\cref{perturbedBessel} in the weak form, integrating by parts, and
taking $k \rightarrow \infty$, we obtain $ry^{\prime\prime} +
Ay^\prime +\tilde{\kappa}^2 r\sinh y = r\lambda$ on $(0,T)$ in the
weak sense where the distributional derivative $y^{\prime\prime}$ can
be identified with a continuous function on $(0,T)$ using the equation
above. Using \cref{perturbedBessel} and the uniform convergence of
$y_{\epsilon_k}$ and its derivative as $k \rightarrow \infty$, we
conclude $(r+\epsilon_k) y_{\epsilon_k}^{\prime\prime}
\xrightarrow[k\to \infty]{} -(Ay^\prime + \tilde{\kappa}^2r \sinh y) +
r\lambda = ry^{\prime\prime}$ uniformly on $[\delta,T]$, and therefore
$y_{\epsilon_k}^{\prime\prime} \xrightarrow[k\to
  \infty]{}y^{\prime\prime}$ in $C([\delta,T];\mathbb{R})$ for every
$\delta>0$. We have shown that $y_{\epsilon_k}^{(j)} \xrightarrow[k
  \to \infty]{} y^{(j)}$ uniformly on compact subsets of $(0,T)$ for
$j=0,1,2$. Taking $k \to \infty$ from \cref{perturbedBessel}, we
conclude that $y$ satisfies the desired ODE pointwise on
$(0,T)$. Since the vector field $F$ is smooth on $(0,T) \times
\mathbb{R}^2$ where $T>0$ was arbitrary, we conclude $y \in
C^\infty_{\mathrm{loc}}((0,\infty);\mathbb{R}))$.

Since $y^{\prime\prime}$ is continuous on $(0,T)$ and
$y_{\epsilon_k}^\prime \rightharpoonup y^\prime$ in
$L^2((0,T);\mathbb{R})$ as $k \to \infty$, for every $\phi \in
C_c^\infty((0,T);\mathbb{R})$,
\[
    \int_0^T y_{\epsilon_k}^{\prime\prime}\phi = - \int_0^T
    y_{\epsilon_k}^\prime \phi^\prime \xrightarrow[k\to \infty]{} -
    \int_0^T y^\prime \phi^\prime = \int_0^T y^{\prime\prime}\phi.
\]
Therefore, $\{y_{\epsilon_k}^{\prime\prime}\}$ is uniformly bounded in
$L^2((0,T);\mathbb{R})$, and another application of the
Arzel\`{a}-Ascoli Theorem shows that there exists a convergent
subsequence of $\{y_{\epsilon_k}^\prime\}$ in
$C([0,T];\mathbb{R})$. Since we showed $y_{\epsilon_k}^\prime
\xrightarrow[k\to \infty]{C([\delta,T];\mathbb{R})} y^\prime$ for
every $\delta>0$, we conclude that $\delta$ could be taken to be
zero. In particular, $y^\prime(0) = \lim\limits_{k \to
  \infty}y_{\epsilon_k}^\prime (0)=0$.

Finally from the phase portrait analysis, the solution $(y(r),w(r))$
exhibits an oscillatory behavior in $\mathbb{R}^2$; however, note that
in the non-autonomous case, the solution curve does not lie in any
curves of constant Hamiltonian due to the $y^\prime$ term. Since every
closed curve of constant Hamiltonian contains the global minimum
$(\sinh^{-1}\Big(\frac{\lambda}{\tilde{\kappa}^2}\Big),0)$, there
exists $\{R_n\}_{n=1}^\infty$ such that $0<R_n < R_{n+1}, R_n
\xrightarrow[n\to \infty]{}\infty$ such that $y(R_n)=0$.

\end{proof}

\begin{remark}
	For $d=3$, \cref{nonlinearBessel} with $\tilde{\kappa}=1$ can
        be understood as a nonlinear zeroth-order spherical Bessel
        equation. To be more precise, a (linear) zeroth-order
        spherical Bessel ODE is given by
	\[
		r y^{\prime\prime} + 2 y^\prime + r y=0.    
	\]
	The two linearly independent solutions are given by
	\[
		j_0(r) = \frac{\sin r}{r};\: y_0(r) = -\frac{\cos r}{r}.
	\]
	We give plots comparing the linear and nonlinear solutions for $d=3$.
\end{remark}
	
	\begin{figure}[ht]
		\centering
\begin{tikzpicture}
        \footnotesize
     \node at (0,0) {\includegraphics[scale=0.30]{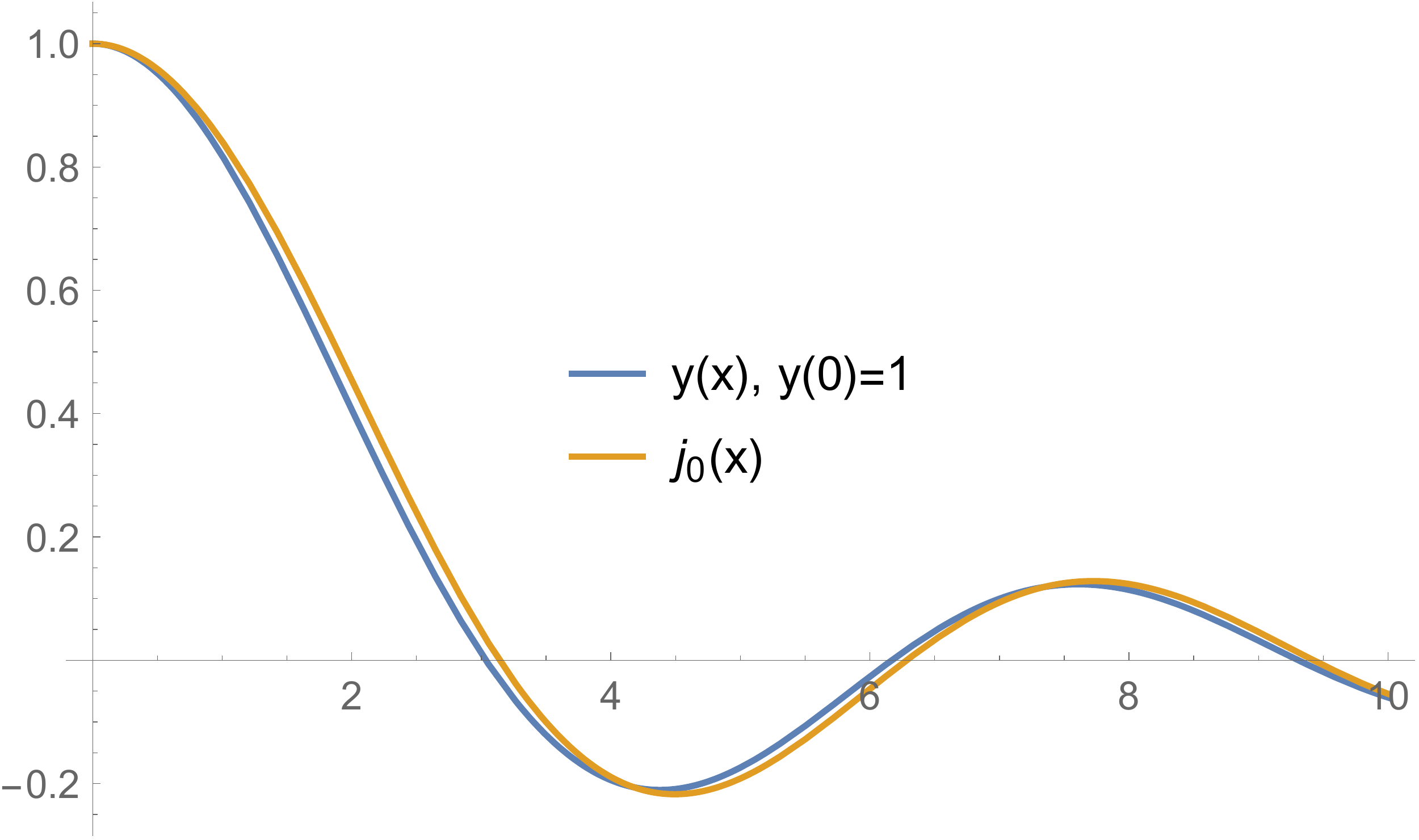}};
     \node at (8,0) {\includegraphics[scale=0.30]{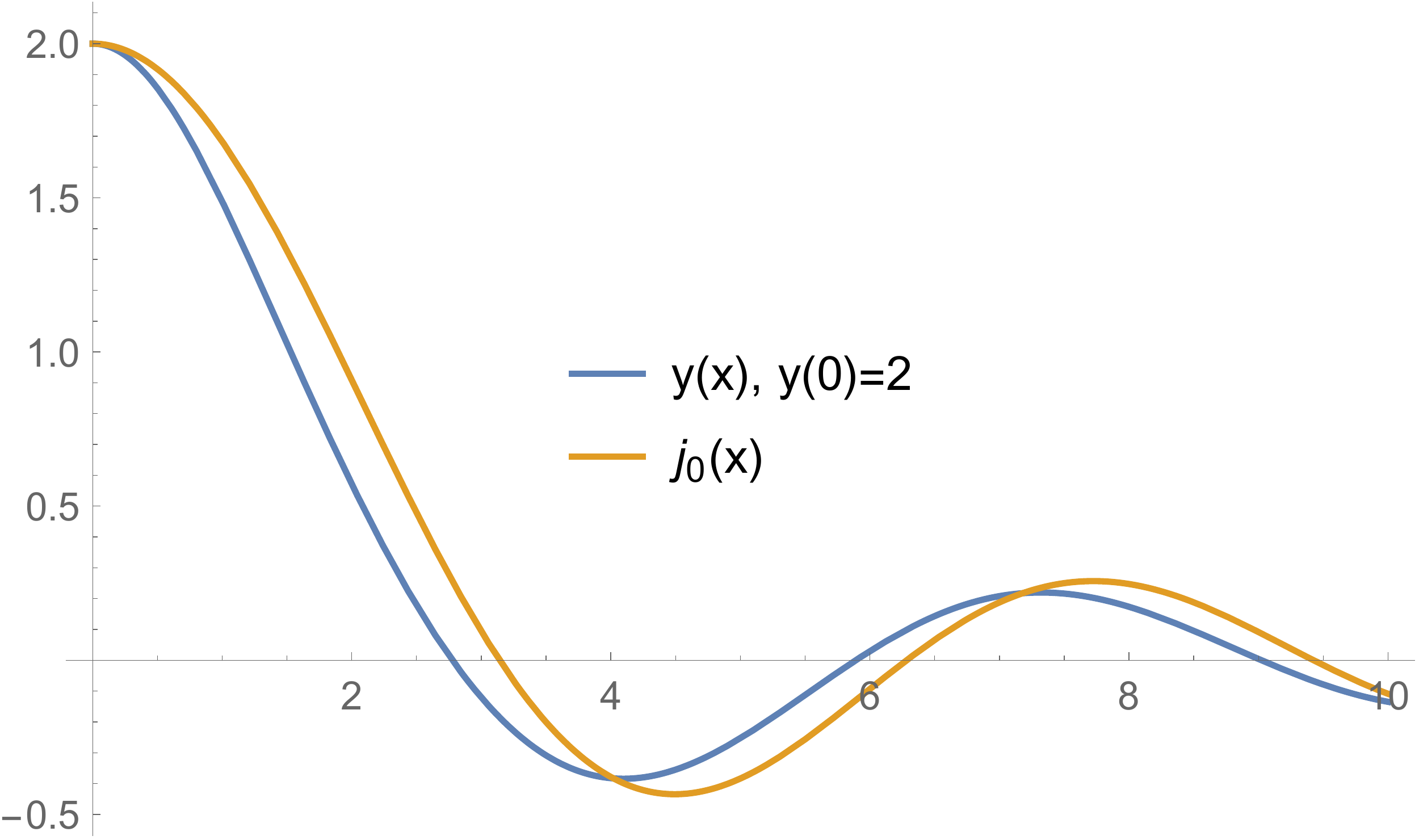}};
     \fill [white] (-0.3,-0.5) rectangle (1.25,0.5);
     \node at (0.7,0.25) {$y(x)$, $y(0)=1$};
     \node at (0.1,-0.2) {$j_0(x)$};
     \fill [white] (7.7,-0.5) rectangle (9.25,0.5);
     \node at (8.7,0.25) {$y(x)$, $y(0)=2$};
     \node at (8.1,-0.2) {$2j_0(x)$};
\end{tikzpicture}
		\caption{Comparison of solutions to \cref{nonlinearBessel} and its corresponding linearization.}
	\end{figure}

\begin{remark}
    Intuitively, this non-uniqueness stems from the non-coercivity of
    the nonlinear operator $Tu \coloneqq -\epsilon\Delta u +
    \kappa^2\sinh u$ when $\epsilon,\kappa$ do not satisfy
    $\epsilon,\kappa>0$. Indeed, our choice of nonlinearity is beyond
    the scope of those discussed in \cite{mcleod1993uniqueness} that
    studies the uniqueness of radial solution to $\Delta u + f(u)=0$
    when $f^\prime(0)<0$. As a simple example, let
    $\epsilon=1,\kappa=i$ and consider the linearized equation
    $u^{\prime\prime} + u =0$ in $x \in (0,\pi)$ with the boundary
    condition $u(0)=u(\pi)=0$. Then, we have an uncountable family of
    solutions $\{A\sin x\}_{A \in \mathbb{R}}$.
    
    However, it turns out that uniqueness can be salvaged if we drop
    the lower orders terms of $\sinh(u)$. We state a result whose
    proof, based on the Derrick-Pohozaev identity, is easily adapted
    from that of \cite[Section 9.4, Theorem 1]{evans2010partial};
    compare this to \cref{nonunique}.
    
    Let $d\geq 3$ and $N_0>\frac{d+2}{d-2}$ be an odd integer. Suppose
    $u \in C^2(\overline{\Omega})$ is a classical solution to
    	\[
    	\begin{split}
    	\label{counterexample2}
		-\Delta u &= \sum_{N \geq N_0,\: N\:odd} \frac{u^N}{N!},\: x \in \Omega\\
		u &=0,\: x \in \partial \Omega,\nonumber
	\end{split}
	\]
    where $\Omega$ is a star-shaped domain containing $0 \in
    \mathbb{R}^d$ with $\partial \Omega \in C^1$. Then, $u=0$ in
    $\overline{\Omega}$.
\end{remark}
\end{appendices}

\bibliographystyle{abbrv}
\bibliography{pdesref,citations}

\begin{thebibliography}{10}

\bibitem{babusk_nobile_temp_10}
I.~Babuska, F.~Nobile, and R.~Tempone.
\newblock A stochastic collocation method for {Elliptic Partial Differential
  Equations} with random input data.
\newblock {\em SIAM Review}, 52(2):317--355, 2010.

\bibitem{Bajaj2005}
C.~Bajaj and Z.~Yu.
\newblock Geometric and signal processing of reconstructed 3d maps of molecular
  complexes.
\newblock In S.~Aluru, editor, {\em Handbook of Computational Molecular
  Biology}. Chapman \& Hall/CRC Press, 2005.

\bibitem{Bajaj2003}
C.~Bajaj, Z.~Yu, and M.~Auer.
\newblock Volumetric feature extraction and visualization of tomographic
  molecular imaging.
\newblock {\em Journal of Structural Biology}, 144(1):132 -- 143, 2003.
\newblock Analytical Methods and Software Tools for Macromolecular Microscopy.

\bibitem{Baker2001}
N.~Baker, D.~Sept, S.~Joseph, M.~Holst, and J.~McCammon.
\newblock Electrostatics of nanosystems: application to microtubules and the
  ribosome.
\newblock {\em Proceedings of the National Academy of Sciences of the United
  States of America}, 98(18):10037—10041, August 2001.

\bibitem{Berman2000}
H.~M. Berman, J.~Westbrook, Z.~Feng, G.~Gilliland, T.~Bhat, H.~Weissig,
  I.~Shindyalov, and P.~Bourne.
\newblock The protein data bank (www.pdb.org).
\newblock {\em Nucleic Acids Res.}, 28:235--242, 2000.

\bibitem{brezis2010functional}
H.~Brezis.
\newblock {\em Functional analysis, Sobolev spaces and partial differential
  equations}.
\newblock Springer Science \& Business Media, 2010.

\bibitem{brezis2007nonlinear}
H.~Brezis, M.~Marcus, and A.~C. Ponce.
\newblock Nonlinear elliptic equations with measures revisited.
\newblock {\em Mathematical Aspects of Nonlinear Dispersive Equations (J.
  Bourgain, C. Kenig, and S. Klainerman, eds.), Annals of Mathematics Studies},
  163:55--110, 2007.

\bibitem{Castrillon2020}
J.~E. Castrill{\'o}n-Cand{\'a}s and M.~Kon.
\newblock Analytic regularity and stochastic collocation of high-dimensional
  newton iterates.
\newblock {\em Advances in Computational Mathematics}, 46(3):42, May 2020.

\bibitem{Castrillon2016}
J.~E. Castrill\'{o}n-Cand\'{a}s, F.~Nobile, and R.~Tempone.
\newblock Analytic regularity and collocation approximation for {PDEs} with
  random domain deformations.
\newblock {\em Computers and Mathematics with applications}, 71(6):1173--1197,
  2016.

\bibitem{Castrillon2021}
J.~E. Castrillón-Candás and J.~Xu.
\newblock A stochastic collocation approach for parabolic pdes with random
  domain deformations.
\newblock {\em Computers \& Mathematics with Applications}, 93:32--49, 2021.

\bibitem{crandall1971bifurcation}
M.~G. Crandall and P.~H. Rabinowitz.
\newblock Bifurcation from simple eigenvalues.
\newblock {\em Journal of Functional Analysis}, 8(2):321--340, 1971.

\bibitem{evans2010partial}
L.~C. Evans.
\newblock {\em Partial differential equations}, volume~19.
\newblock American Mathematical Soc., 2010.

\bibitem{gilbarg2015elliptic}
D.~Gilbarg and N.~S. Trudinger.
\newblock {\em Elliptic partial differential equations of second order}.
\newblock springer, 2015.

\bibitem{Holst1994}
M.~Holst.
\newblock {\em The {Poisson-Boltzmann} equation: Analysis and multilevel
  numerical solution.}
\newblock Applied Mathematics and CRPC, California Institute of Technology, 1st
  ed edition, 1994.

\bibitem{kielhofer2011bifurcation}
H.~Kielh{\"o}fer.
\newblock {\em Bifurcation theory: an introduction with applications to partial
  differential equations}, volume 156.
\newblock Springer Science \& Business Media, 2011.

\bibitem{kwong1989uniqueness}
M.~K. Kwong.
\newblock Uniqueness of positive solutions of $\delta$u- u+ up= 0 in r n.
\newblock {\em Archive for Rational Mechanics and Analysis}, 105(3):243--266,
  1989.

\bibitem{mclean2000strongly}
W.~McLean and W.~C.~H. McLean.
\newblock {\em Strongly elliptic systems and boundary integral equations}.
\newblock Cambridge university press, 2000.

\bibitem{mcleod1993uniqueness}
K.~McLeod.
\newblock Uniqueness of positive radial solutions of ${\Delta} u + f(u)=0$ in
  $\mathbb{R}^n$ {II}.
\newblock {\em Transactions of the American Mathematical Society},
  339(2):495--505, 1993.

\bibitem{mizuguchi2017estimation}
M.~Mizuguchi, K.~Tanaka, K.~Sekine, and S.~Oishi.
\newblock Estimation of sobolev embedding constant on a domain dividable into
  bounded convex domains.
\newblock {\em Journal of inequalities and applications}, 2017(1):299, 2017.

\bibitem{Neumaier1997}
A.~Neumaier.
\newblock Molecular modeling of proteins and mathematical prediction of protein
  structure.
\newblock {\em SIAM Rev.}, 39(3):407–460, Sept. 1997.

\bibitem{nobile_tempone_08}
F.~Nobile and R.~Tempone.
\newblock Analysis and implementation issues for the numerical approximation of
  parabolic equations with random coefficients.
\newblock {\em International Journal for Numerical Methods in Engineering},
  80(6-7):979--1006, 2009.

\bibitem{nobile2008a}
F.~Nobile, R.~Tempone, and C.~Webster.
\newblock A sparse grid stochastic collocation method for partial differential
  equations with random input data.
\newblock {\em SIAM Journal on Numerical Analysis}, 46(5):2309--2345, 2008.

\bibitem{Padhorny2016}
D.~Padhorny, A.~Kazennov, B.~S. Zerbe, K.~A. Porter, B.~Xia, S.~E. Mottarella,
  Y.~Kholodov, D.~W. Ritchie, and D.~Kozakov.
\newblock Protein-protein docking by fast generalized fourier transforms on 5d
  rotational manifolds.
\newblock {\em Proceedings of the National Academy of Sciences of the United
  States of America}, 113:4286--4293, 2016.

\bibitem{payne1960optimal}
L.~E. Payne and H.~F. Weinberger.
\newblock An optimal poincar{\'e} inequality for convex domains.
\newblock {\em Archive for Rational Mechanics and Analysis}, 5(1):286--292,
  1960.

\end{thebibliography}
\end{document}